\theoremstyle{plain}
\newtheorem{thm}{Theorem}[section]
\newtheorem*{thm*}{Theorem}
\newtheorem{prop}[thm]{Proposition}
\newtheorem{lemma}[thm]{Lemma}
\newtheorem{cor}[thm]{Corollary}
\theoremstyle{definition}
\newtheorem{defn}[thm]{Definition}
\theoremstyle{remark}
\newtheorem*{rmk}{Remark}
\newtheorem*{rmks}{Remarks}
\newcommand{\C}{\mathbb{C}}
\renewcommand{\H}{\mathbb{H}}
\newcommand{\Z}{\mathbb{Z}}
\newcommand{\Q}{\mathbb{Q}}
\newcommand{\N}{\mathbb{N}}
\newcommand{\R}{\mathbb{R}}
\newcommand{\slz}{{\text {\rm SL}}_2(\mathbb{Z})}
\newcommand{\re}{\textnormal{Re}}
\newcommand{\im}{\textnormal{Im}}
\newcommand{\vt}[1]{\left\lvert #1 \right\rvert}
\newcommand{\reg}{\textnormal{reg}}
\newcommand{\Ec}{\mathcal{E}}
\newcommand{\Fc}{\mathcal{F}}
\newcommand{\Qc}{\mathcal{Q}}
\DeclareMathOperator{\tr}{tr}
\newcommand{\dm}{\mathrm{d}}
\newcommand{\Hs}{\mathscr{H}}
\newcommand{\Ks}{\mathscr{K}}
\newcommand{\af}{\mathfrak{a}}
\newcommand{\ef}{\mathfrak{e}}
\newcommand{\id}{\mathrm{id}}
\let\@@pmod\pmod
\DeclareRobustCommand{\pmod}{\@ifstar\@pmods\@@pmod}
\def\@pmods#1{\mkern4mu({\operator@font mod}\mkern 6mu#1)}
\setlist{nosep}
\setlist{noitemsep}
\numberwithin{equation}{section}
\title{A modular framework for generalized Hurwitz class numbers III}
\author{Andreas Mono}
\address{Department of Mathematics, Vanderbilt University, 1326 Stevenson Center, Nashville TN 37240, USA}
\email{andreas.mono@vanderbilt.edu}
\begin{document}

\begin{abstract}
In $2003$, Pei and Wang introduced higher level analogs of the classical Cohen--Eisenstein series. In recent joint work with Beckwith, we found a weight $\frac{1}{2}$ sesquiharmonic preimage of their weight $\frac{3}{2}$ Eisenstein series under $\xi_{\frac{1}{2}}$ utilizing a construction from seminal work by Duke, Imamo\={g}lu and T\'{o}th. In further joint work with Beckwith, when restricting to prime level, we realized our preimage as a regularized Siegel theta lift and evaluated its (regularized) Fourier coefficients explicitly. This relied crucially on work by Bruinier, Funke and Imamo\={g}lu. In this paper, we extend both works to higher weights. That is, we provide a harmonic preimage of Pei and Wang's generalized Cohen--Eisenstein series under $\xi_{\frac{3}{2}-k}$, where $k > 1$. Furthermore, when restricting to prime level, we realize them as outputs of a regularized Shintani theta lift of a higher level holomorphic Eisenstein series, which builds on recent work by Alfes and Schwagenscheidt. Lastly, we evaluate the regularized Millson theta lift of a higher level Maass--Eisenstein series, which is known to be connected to the Shintani theta lift by a differential equation by earlier work of Alfes and Schwagenscheidt.
\end{abstract}

\subjclass[2020]{11F30 (Primary); 11F11, 11F12, 11F27, 11F37 (Secondary)}

\keywords{Class numbers, Eisenstein series, Harmonic Maass forms, Holomorphic modular forms, Hurwitz class numbers, Millson lift, Non-holomorphic modular forms, Shintani lift, Theta lifts}


\maketitle

\section{Introduction and statement of results}

\subsection{Overall motivation}
Throughout, we let $k >1$ be an integer. In $1975$, Cohen \cite{cohen75} constructed weight $k+\frac{1}{2}$ Eisenstein series 
\begin{align} \label{eq:Cohendef}
\Hs_{k}(\tau) \coloneqq \sum_{n \geq 0} H(k,n) q^n, \qquad q \coloneqq e^{2\pi i \tau}, \qquad \tau \in \H \coloneqq \left\{\tau = u+iv \colon v > 0\right\},
\end{align}
of level $4$ whose Fourier coefficients $H(k,n)$ are values of Dirichlet $L$-functions. Among (powers of) Jacobi's $\theta$-function or Dedekind's $\eta$-function, these are some of the earliest examples of half integral weight modular forms and have become ubiquitous in the theory of modular forms. In weight $\frac{3}{2}$, Cohen's coefficients $H(1,n)$ become the classical Hurwitz class numbers $H(n)$ by Dirichlet's class number formula. However, their generating function $\Hs_{1}$ is no longer modular.

This was resolved by Zagier \cites{zagier75-2, zagier76-2} around the same time. He discovered a non-holomorphic modular completion $\mathcal{H}$ of $\Hs_{1}$ having weight $\frac{3}{2}$ and level $4$. In today's terminology, he constructed a so-called harmonic Maass form by applying ``Hecke's convergence trick'' to a weight $\frac{3}{2}$ Eisenstein series projected to Kohnen's plus space. Roughly speaking, harmonic Maass forms are real-analytic analogs of (complex-analytic) modular forms along with a relaxed growth condition towards the cusps (see Definition \ref{defn:hmfdef} for a precise definition). The theory of harmonic Maass forms goes back to Bruinier and Funke \cite{brufu02} and is built around the properties of their \emph{$\xi_{\kappa}$-operator} ($\kappa \in \frac{1}{2}\Z$)
\begin{align} \label{eq:xidef}
\xi_{\kappa}f \coloneqq 2i v^{\kappa} \overline{\frac{\partial f}{\partial\overline{\tau}}}.
\end{align}
Although the $\xi_{\kappa}$-operator maps the space of weight $\kappa$ harmonic Maass forms surjectively onto the space of weakly holomorphic modular forms of weight $2-\kappa$, finding explicit preimages under $\xi_{\kappa}$ is an open problem in general. It is solved in the case of Eisenstein- and Poincar{\'e} series
\cites{BO1, BO2, wag, rhowal, mamoro1, meonro, mo1} and weight 2 newforms \cites{afmr, agor} (see \eqref{eq:thetaliftconnection} below) so far.

\subsection{Previous results}

Let $N > 1$ be odd and square-free throughout, and let $\ell \mid N$. In $2003$, Pei and Wang \cite{peiwang} generalized Cohen's work in the level aspect by constructing the \emph{generalized Cohen--Eisenstein series}
\begin{align} \label{eq:Hsdef}
\Hs_{k,\ell,N}(\tau) \coloneqq \sum_{n \geq 0} H_{k,\ell,N}(n) q^n,
\end{align}
which resembles equation \eqref{eq:Cohendef}. The functions $\Hs_{k,\ell,N}$ arise as the plus space projections of (modifications of) Eisenstein series investigated by Pei in \cites{peiI, peiII} in weight $\frac{3}{2}$. Pei and Wang showed that the functions $\Hs_{k,\ell,N}$ are modular forms of weight $k+\frac{1}{2}$ and level $4N$ for every $\ell \mid N$. In weight $\frac{3}{2}$, the level structure allows some refined inspection of the (obstruction towards) modularity of $\Hs_{1,\ell,N}$. More precisely, it turns out that $\Hs_{1,1,N}$ continues to be non-modular, while $\Hs_{1,\ell,N}$ is modular for $1 < \ell \mid N$. Pei and Wang related $\Hs_{k,\ell,N}$ to standard Eisenstein series via the Shimura lift, which we summarize in Subsection \ref{subsec:peiwangprelim}. More details can be found in \cite{peiwangbook}

Due to their technical nature, we define the numbers $H_{k,\ell,N}(n)$ in equations \eqref{eq:HNNdef}, \eqref{eq:HellNdef} below. In the spirit of Cohen's definition of the numbers $H(k,n)$, Pei and Wang's definition of $H_{k,\ell,N}(n)$ can be viewed as a suitable modification of Dirichlet's class number formula. However, compared to Cohen's numbers $H(k,n)$, the numbers $H_{k,\ell,N}$ involve some additional local factors pertaining to $\ell$ and $N$. We have $H(k,n) = H_{k,1,1}(n)$ as well as $\Hs_{k} = \Hs_{k,1,1}$.

In part I of this series joint with Beckwith \cite{bemo1}, we proved that a certain linear combination of $\Hs_{1,1,1}$ (the Hurwitz class number generating function) and $\Hs_{1,1,N}$ yields a holomorphic modular form of weight $\frac{3}{2}$ and level $4N$. Moreover, we found a higher level analog of Zagier's non-holomorphic Eisenstein series $\mathcal{H}$ completing $\Hs_{1,1,N}$ to a harmonic Maass form of weight $\frac{3}{2}$ and level $4N$. Both results relied on a careful inspection of a weight $\frac{1}{2}$ Maass--Eisenstein series around its spectral point, which extends a construction from seminal work of Duke, Imamo\={g}lu and T\'{o}th \cite{dit11annals} to higher levels. Their results were reinterpreted in the framework of Siegel theta lifts in foundational work of Bruinier, Funke and Imamo\={g}lu \cite{brufuim}. In part II of this series joint with Beckwith \cite{bemo2}, we restricted to odd prime level to connect our results from the first part to Bruinier, Funke and Imamo\={g}lu's regularized Siegel theta lift. In turn, we obtained an explicit evaluation of the (regularized) quadratic traces appearing as Fourier coefficients of their theta lift in the Eisenstein case. For instance, we showed in \cite{bemo2}*{Theorem 1.4, 6.1 (i)} that (see equation \eqref{eq:Quadraticformssets} for the definition of $\mathcal{Q}_{p,n}$)
\begin{align*}
\sum_{Q \in \mathcal{Q}_{p,n} \slash \Gamma_0(p)} \frac{1}{\vt{\Gamma_0(p)_Q}} &= \frac{4(p+1)}{p} H_{1,1,p}(-n) - \frac{2(p+1)}{p-1} H_{1,p,p}(-n) \\
&= 4H_{1,1,1}(-n) - 2H_{1,p,p}(-n)
\end{align*}
whenever $n < 0$ with $n \equiv 0,1 \pmod*{4}$. The second equality follows by \cite{bemo1}*{Theorem 1.1}. Here, we assumed that $k = 1$ (i.e.\ weight $\frac{3}{2}$). If $n > 0$, class numbers can be studied as cycle integrals of the constant function $1$, see Siegel \cite{siegel}*{pp.\ 86 or pp.\ 116} for the case of level $N=1$. Corollary \ref{cor:shintanicor} below extends this perspective to $k > 1$ and odd prime levels. For these reasons, we refer to the numbers $H_{k,\ell,N}(n)$ as generalized Hurwitz class numbers.

\subsection{Main results}

Adapting Zagier's computations on $\mathcal{H}$ to other weights, Wagner \cite{wag} constructed preimages of the classical Cohen--Eisenstein series $\Hs_{k}$ under $\xi_{\frac{3}{2}-k}$, which are harmonic Maass forms of weight $\frac{3}{2}-k$ and level $4$. Let $\cdot\vert_{\kappa}\gamma$ be the Petersson slash operator (defined in equation \eqref{eq:slashdef}), $\mathrm{pr}^{+}$ be the projection operator to Kohnen's plus space (see \cite{koh85}*{Proposition 3}, \cite{thebook}*{Proposition 6.7}) and $\Gamma_{\infty} \coloneqq \left\{\pm \left(\begin{smallmatrix} 1 & n \\ 0 & 1 \end{smallmatrix}\right)  \colon n \in \Z\right\}$. We define the \emph{Maass--Eisenstein series of weight $\frac{3}{2}-k$}
\begin{align} \label{eq:Fcpdef}
\Fc_{\frac{3}{2}-k,4N}^{+}(\tau) \coloneqq \mathrm{pr}^{+} \sum_{\gamma \in \Gamma_{\infty} \backslash \Gamma_0(4N)} v^{k-\frac{1}{2}} \Big\vert_{\frac{3}{2}-k}\gamma.
\end{align}
The following higher level analog of Wagner's result extends our earlier work joint with Beckwith \cite{bemo1} to higher weights.
\begin{thm} \label{thm:CEpreimage}
Let $k > 1$, $N \in \N$ be odd and square-free. Let $\zeta$ be the usual analytic continuation of the Riemann $\zeta$-function. Then we have
\begin{align*}
\xi_{\frac{3}{2}-k} \Fc_{\frac{3}{2}-k,4N}^{+}(\tau)
= \frac{2k-1}{3\zeta(1-2k)} \sum_{\ell \mid N} \Big(\prod_{\substack{p \text{ prime} \\ p\mid \ell}} \frac{1}{1-p^{2k-1}}\Big) \Big(\prod_{\substack{p \text{ prime} \\ p\mid \frac{N}{\ell}}} \frac{p-1}{p^{2k}-p}\Big) \Hs_{k,\ell,N}(\tau).
\end{align*}
\end{thm}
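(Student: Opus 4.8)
The plan is to compute $\xi_{\frac{3}{2}-k}$ of the non-projected Eisenstein series first, and then commute $\xi_{\frac{3}{2}-k}$ past the plus-space projection. Since $v^{k-\frac12}$ is annihilated by neither $\xi$ nor the Laplacian in a trivial way, I would start from the identity $\xi_{\frac{3}{2}-k}\,v^{s} = $ (a constant times) $v^{s+k-\frac12}$ read off directly from the definition \eqref{eq:xidef}: one has $\xi_{\frac{3}{2}-k} v^{k-\frac12} = 2i\,v^{\frac32-k}\,\overline{\partial_{\overline\tau} v^{k-\frac12}}$, and since $\partial_{\overline\tau} v^{k-\frac12} = -\tfrac{i}{2}(k-\tfrac12) v^{k-\frac32}$, this gives $\xi_{\frac{3}{2}-k} v^{k-\frac12} = (k-\tfrac12)\,v^{0} = k-\tfrac12$, a constant. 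Because $\xi_\kappa$ intertwines the slash operators, i.e.\ $\xi_{\kappa}(f|_{\kappa}\gamma) = (\xi_{\kappa}f)|_{2-\kappa}\gamma$, applying $\xi_{\frac{3}{2}-k}$ termwise to the series $\sum_{\gamma\in\Gamma_\infty\backslash\Gamma_0(4N)} v^{k-\frac12}|_{\frac{3}{2}-k}\gamma$ yields $(2k-1)/2$ times $\sum_{\gamma}1|_{k+\frac12}\gamma$, which is exactly the weight $k+\frac12$, level $4N$ holomorphic Eisenstein series attached to the cusp $\infty$. One must justify termwise application of $\xi$ (uniform convergence on compacta after analytic continuation in the spectral parameter, as in \cite{bemo1}), but this is the same bookkeeping as in the weight $\frac32$ case and goes through for $k>1$ where convergence is in fact absolute.

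Next I would commute $\xi_{\frac{3}{2}-k}$ with $\mathrm{pr}^{+}$. The plus-space projection is a finite linear combination of slash operators with constant coefficients (\cite{koh85}*{Proposition 3}, \cite{thebook}*{Proposition 6.7}), and $\xi_{\kappa}$ sends weight $\kappa$ to weight $2-\kappa = k+\frac12$; since $\kappa = \frac32-k$ and $2-\kappa$ differ by the integer $2k-1$, the relevant plus-space operators in the two weights have the \emph{same} coefficients, so $\xi_{\frac{3}{2}-k}\circ\mathrm{pr}^{+} = \mathrm{pr}^{+}\circ\xi_{\frac{3}{2}-k}$. Hence $\xi_{\frac{3}{2}-k}\Fc_{\frac32-k,4N}^{+} = \tfrac{2k-1}{2}\,\mathrm{pr}^{+}E_{k+\frac12,4N,\infty}$, where $E_{k+\frac12,4N,\infty}$ is the genuine holomorphic Eisenstein series of weight $k+\frac12$ and level $4N$ attached to $\infty$.

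It then remains to identify the plus-space projection of this single Eisenstein series with the stated $\C$-linear combination of the $\Hs_{k,\ell,N}$. Here I would invoke Pei and Wang's description (summarized in the paper's Subsection~\ref{subsec:peiwangprelim}) of the space of weight $k+\frac12$, level $4N$ plus-space Eisenstein series: for $N$ odd and square-free this space is spanned by the $\Hs_{k,\ell,N}$ with $\ell\mid N$, and the Shimura lift matches them with the standard weight $2k$ level-$N$ Eisenstein series. The coefficient $\tfrac{2k-1}{3\zeta(1-2k)}$ together with the Euler products over $p\mid\ell$ and $p\mid N/\ell$ is pinned down by computing the constant term (the coefficient of $q^0$) and one further Fourier coefficient of $\mathrm{pr}^{+}E_{k+\frac12,4N,\infty}$ at the cusp $\infty$ and comparing with $H_{k,\ell,N}(0)$ and $H_{k,\ell,N}(1)$ — equivalently, by tracking how the normalizing factor $\zeta(1-2k) = -B_{2k}/(2k)$ in Cohen's $H(k,0) = \zeta(1-2k)$ propagates through the level-raising and plus-space projection. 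The ratios $(p-1)/(p^{2k}-p)$ and $1/(1-p^{2k-1})$ are precisely the local Euler factors distinguishing the ``oldform at $p$'' normalization ($\ell$ the part of $N$ where the series is genuinely of level $4N/p$) from the ``$p$-new'' normalization, exactly as in \cite{bemo1}*{Theorem 1.1} in weight $\frac32$.

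The main obstacle is the last step: getting the normalizing constants exactly right. Computing $\xi$ of the Eisenstein series is routine, and commuting past $\mathrm{pr}^{+}$ is formal; but matching $\mathrm{pr}^{+}E_{k+\frac12,4N,\infty}$ to Pei and Wang's basis requires carefully unwinding their (admittedly technical) definitions \eqref{eq:HNNdef}, \eqref{eq:HellNdef} of $H_{k,\ell,N}(n)$ and the local factors therein, and the cleanest route is probably to verify the identity on enough Fourier coefficients — the constant term plus coefficients at primes $q\nmid 4N$ — using that a plus-space Eisenstein series of this weight and level is determined by finitely many coefficients, rather than manipulating the closed forms directly. The factor $\tfrac{1}{3}$ reflects $H(1,0) = -\tfrac{1}{12} = \tfrac13\zeta(-1)\cdot$(something) in the base case and should emerge from the index $[\,\Gamma_0(4):\Gamma_0(4N)\,]$-type computation in the level-raising.
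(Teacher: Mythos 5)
Your first two steps are fine in outline: $\xi_{\frac{3}{2}-k}v^{k-\frac{1}{2}}=k-\tfrac{1}{2}$, the intertwining $\xi_{\kappa}(f\vert_{\kappa}\gamma)=(\xi_{\kappa}f)\vert_{2-\kappa}\gamma$ holds also for the theta multiplier (because $\xi$ conjugates the multiplier, turning $\varepsilon_d^{2\kappa}$ into $\varepsilon_d^{-2\kappa}=\varepsilon_d^{2(2-\kappa)}$), and termwise differentiation is justified by absolute convergence for $k>1$. One caveat: your justification for $\xi_{\frac{3}{2}-k}\circ\mathrm{pr}^{+}=\mathrm{pr}^{+}\circ\xi_{\frac{3}{2}-k}$ is wrong as stated --- the coefficients of Kohnen's projection in weights $\frac{3}{2}-k$ and $k+\frac{1}{2}$ are \emph{not} equal (the sign $(-1)^{\lfloor(\lambda+1)/2\rfloor}$ and the exponential factors differ); the commutation holds because $\xi$ complex-conjugates those coefficients, and this needs to be checked. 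The paper sidesteps this entirely by working with the explicit Fourier expansion of $\Fc_{\frac{3}{2}-k,4N}^{+}$ from Lemma \ref{lem:Eisensteinfourier} (iii), in which the projection has already been carried out.

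The genuine gap is your final step. After reducing to $\tfrac{2k-1}{2}\,\mathrm{pr}^{+}\sum_{\gamma}1\vert_{k+\frac{1}{2}}\gamma$, you must identify this element of $E_{k+\frac{1}{2}}^{+}(4N)$ with the specific combination $\frac{2k-1}{3\zeta(1-2k)}\sum_{\ell\mid N}c_{\ell}\Hs_{k,\ell,N}$. By Lemma \ref{lem:peiwangmainresult} (ii) this space has dimension $\sigma_0(N)=2^{\omega(N)}$, so there are $2^{\omega(N)}$ unknown coefficients $c_{\ell}$; comparing the constant term (which only sees $c_N$, since $H_{k,\ell,N}(0)=0$ for $\ell\neq N$) and ``one further Fourier coefficient'' gives two linear conditions and cannot determine the $c_{\ell}$ once $N$ has more than one prime factor. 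Your fallback --- verify enough coefficients up to a Sturm bound --- is not a shortcut either: it requires computing every Fourier coefficient of the projected Eisenstein series in that range, i.e.\ evaluating $\Ks_{\frac{3}{2}-k,4N}^{+}(0,n;k+\frac{1}{2})$ explicitly and matching it against the definitions \eqref{eq:HNNdef}, \eqref{eq:HellNdef}. That evaluation is precisely the content of the paper's Proposition \ref{prop:Kloostermanzetapeiwang}, which rests on the reduction \eqref{eq:Kloostermanconnection} to weights $\frac{1}{2}$ and $\frac{3}{2}$, the factorization of the zeta function into local factors $a_{\kappa}(p^j,n)$ (Lemma \ref{lem:shimurasturmresult}), their identification with Pei--Wang's $A_k(p,n)$ (Lemma \ref{lem:localconnection}), the binomial-type expansion of $\prod_{p\mid N}A_k(p,-n)$ over divisors $\ell\mid N$, and Pei--Wang's identity (Lemma \ref{lem:peiwangresult}). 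None of this is addressed in your proposal; the ``tracking how $\zeta(1-2k)$ propagates'' heuristic does not substitute for it. So the plan is a plausible skeleton, but the technical heart of the theorem is missing.
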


\begin{rmk}
The right hand side simplifies to \cite{wag}*{Theorem 1.1 (2)} if $k > 1$ and $N = 1$. If $k=1$ and $N > 1$ then the right hand side becomes the holomorphic part of \cite{bemo1}*{Theorem 1.3 (iii)}.
\end{rmk}

Since Niwa's reinterpretation \cite{niwa} of the classical Shimura map \cite{shim} as a theta lift (see Cipra \cite{cip83} as well), theta lifts became a central theme in inspecting the interplay between integral and half integral weight cusp forms \cites{koh80, koh82, koza81, koza84, grokoza, koh85, sz88, zag02}. An idea of Harvey and Moore \cite{hamo} as well as Borcherds \cite{bor98} to regularize the theta integral extends the domain and range of such theta lifts to spaces of harmonic Maass forms $H_{\kappa}^{!}(N)$ \cites{BO4, BEY, BO3, alfehl, alfes, BY, mamo1, mamo2, bor00, males, BS}. Building on work by Bruinier and Funke \cite{brufu06} as well as by Bruinier, Funke and Imamo\={g}lu \cite{brufuim}, Alfes and Schwagenscheidt \cites{alneschw18, alneschw21, alfesthesis, schw18} investigated various regularized theta lifts of harmonic Maass forms $f \in H_{2-2k}^{!}(N)$ and $g \in H_{2k}^{!}(N)$. In this paper, we utilize their regularized Shintani theta lift $I^{(\mathrm{S})}(g,\tau)$ (defined in equation \eqref{eq:ISdef}) as well as their regularized Millson theta lift $I^{(\mathrm{M})}(f,\tau)$ (defined in equation \eqref{eq:IMdef}). Then, Alfes and Schwagenscheidt proved that both regularized theta lifts satisfy the differential equation (see \cite{alneschw21}*{Proposition 5.5})
\begin{align} \label{eq:thetaliftconnection}
\xi_{\frac{3}{2}-k} I^{(\mathrm{M})}\left(f, \tau\right) = -\frac{1}{2\sqrt{N}} I^{(\mathrm{S})}\left(\xi_{2-2k}f, \tau\right).
\end{align}
Extending an idea of Guerzhoy \cites{gue14, gue15}, Alfes, Griffin, Ono and Rolen \cite{agor} used equation \eqref{eq:thetaliftconnection} for harmonic Maass forms with cuspidal image under $\xi_{\kappa}$ to construct weight $0$ harmonic preimages of weight $2$ newforms under $\xi_0$.

We apply equation \eqref{eq:thetaliftconnection} to the \emph{weight $2-2k$ Maass--Eisenstein series} 
\begin{align} \label{eq:Fcdef}
\Fc_{2-2k,N}(\tau) \coloneqq \sum_{\gamma \in \Gamma_{\infty} \backslash \Gamma_0(N)} v^{2k-1} \Big\vert_{2-2k}\gamma.
\end{align}
Its image under $\xi_{2-2k}$ is given by a non-zero multiple (see equation \eqref{eq:Eisensteinshadow}) of the \emph{weight $2k$ and level $N$ holomorphic Eisenstein series} 
\begin{align} \label{eq:Ecdef}
\Ec_{2k,N}(\tau) \coloneqq \sum_{\gamma \in \Gamma_{\infty} \backslash \Gamma_0(N)} 1 \big\vert_{2k}\gamma.
\end{align}
We begin by evaluating the right hand side in equation \eqref{eq:thetaliftconnection}. To this end, we recall that there is a ``trace map'' from vector-valued forms $F$ with components $F_h$ for the Weil representation to scalar-valued modular forms of higher level, see equation \eqref{eq:plusspaceprojection}. If $N=1$ or $N = p$ is a prime then the image satisfies the plus space condition (see \cite{brufuim}*{Example 2.2}) and we find the following result by utilizing this map.
\begin{thm} \label{thm:shitanilift}
Let $N = p$ be an odd prime and $k \in 2\N$. Then we have 
\begin{multline*}
\sum_{h \pmod*{2p}} I_h^{(\mathrm{S})}\left(\Ec_{2k,p}, 4p\tau\right) = (-1)^{\frac{k}{2}} \sqrt{p} \frac{\Gamma(k)\zeta(k)}{2^{k-1} \pi^k \zeta(1-2k)} \\
\times \left(p^{-k}\Hs_{k,1,1}(\tau) + \left(1-p^{-k}\right)\left(\frac{p-1}{p^{2k}-p} \Hs_{k,1,p}(\tau) + \frac{\Hs_{k,p,p}(\tau)}{1-p^{2k-1}}\right)\right).
\end{multline*}
\end{thm}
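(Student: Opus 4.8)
The plan is to derive Theorem~\ref{thm:shitanilift} from Theorem~\ref{thm:CEpreimage} by means of the differential equation~\eqref{eq:thetaliftconnection}, rather than by evaluating the Shintani theta integral directly. First I would use equation~\eqref{eq:Eisensteinshadow}, which writes the holomorphic Eisenstein series $\Ec_{2k,p}$ as $c^{-1}\xi_{2-2k}\Fc_{2-2k,p}$ for an explicit non-zero constant $c$, so that by linearity of the Shintani lift and~\eqref{eq:thetaliftconnection} with $f = \Fc_{2-2k,p}$ and $N = p$,
\begin{align*}
I^{(\mathrm{S})}\bigl(\Ec_{2k,p},\tau\bigr) = c^{-1}\,I^{(\mathrm{S})}\bigl(\xi_{2-2k}\Fc_{2-2k,p},\tau\bigr) = -\frac{2\sqrt{p}}{c}\,\xi_{\frac{3}{2}-k}\,I^{(\mathrm{M})}\bigl(\Fc_{2-2k,p},\tau\bigr).
\end{align*}
Thus the problem reduces to identifying the regularized Millson lift $I^{(\mathrm{M})}(\Fc_{2-2k,p},\cdot)$ up to forms in the kernel of $\xi_{\frac{3}{2}-k}$, i.e.\ up to weakly holomorphic forms, which do not affect the statement.

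The heart of the argument is therefore the computation of $I^{(\mathrm{M})}(\Fc_{2-2k,p},\tau)$. Since $\Fc_{2-2k,p}$ is the average of $v^{2k-1}$ over $\Gamma_{\infty}\backslash\Gamma_0(p)$ (and $v^{2k-1}$ is harmonic of weight $2-2k$), I would unfold the regularized theta integral over $\Gamma_0(p)\backslash\H$ against this Eisenstein series, collapsing it to an integral of $v^{2k-1}$ against the $\Gamma_{\infty}$-relevant part of the Millson theta kernel over the strip $\Gamma_{\infty}\backslash\H$; a Borcherds-type convergence factor $v^{s}$ together with continuation in $s$ handles the regularization, and evaluating the resulting integral term by term in the theta Fourier expansion yields a closed formula. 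After applying the trace map~\eqref{eq:plusspaceprojection} — which accounts both for the sum over $h\pmod*{2p}$ and for the rescaling $\tau\mapsto 4p\tau$ — I expect the non-holomorphic part to be a fixed linear combination of the weight $\frac{3}{2}-k$ Maass--Eisenstein series $\Fc_{\frac{3}{2}-k,4p}^{+}$ and its level $4$ analogue $\Fc_{\frac{3}{2}-k,4}^{+}$, the latter arising from the behaviour at the cusp $0$ of $\Gamma_0(p)$ and producing exactly the $p^{-k}\Hs_{k,1,1}$ term. An alternative to the explicit unfolding is to argue by uniqueness: $I^{(\mathrm{M})}(\Fc_{2-2k,p},\cdot)$ is a harmonic Maass form of weight $\frac{3}{2}-k$ and level $4p$ in Kohnen's plus space, and an Eisenstein series of this weight and level is pinned down by its principal parts at the two cusps, so one only needs to match those and then fix the two scalar coefficients by comparing a single Fourier coefficient each.

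It then remains to apply $\xi_{\frac{3}{2}-k}$ to the resulting identity and invoke Theorem~\ref{thm:CEpreimage}: with $N = p$ it rewrites $\xi_{\frac{3}{2}-k}\Fc_{\frac{3}{2}-k,4p}^{+}$ in terms of $\Hs_{k,1,p}$ and $\Hs_{k,p,p}$ (the divisors $\ell = 1,p$ of $p$), and with $N = 1$ it rewrites $\xi_{\frac{3}{2}-k}\Fc_{\frac{3}{2}-k,4}^{+}$ as a multiple of $\Hs_{k,1,1}$, recovering Wagner's formula \cite{wag}*{Theorem 1.1 (2)}. Combining this with the factor $-2\sqrt{p}/c$ and the constant $\frac{2k-1}{3\zeta(1-2k)}$ from Theorem~\ref{thm:CEpreimage}, and simplifying the various $\Gamma$-, $\zeta$- and $\pi$-values together with the power of $i$ coming out of the theta kernel (which accounts for the sign $(-1)^{k/2}$ when $k$ is even), should assemble into the right-hand side of the theorem; this last step is essentially bookkeeping of explicit constants.

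The main obstacle I anticipate is the Millson lift computation in the middle step. Carrying out the unfolding carefully, controlling the Borcherds-type regularization, and — most delicately — disentangling the contributions of the two cusps of $\Gamma_0(p)$ so that the level $4p$ and the level $4$ Maass--Eisenstein series emerge with precisely the coefficients $1-p^{-k}$ and $p^{-k}$, is where essentially all of the technical work lies. Matching the resulting non-holomorphic Fourier coefficients against the known expansions of $\Fc_{\frac{3}{2}-k,4p}^{+}$ and $\Fc_{\frac{3}{2}-k,4}^{+}$ — equivalently, through Theorem~\ref{thm:CEpreimage}, against the generalized Hurwitz class numbers $H_{k,\ell,p}(n)$ — then completes the proof.
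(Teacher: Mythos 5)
Your reduction via \eqref{eq:Eisensteinshadow} and \eqref{eq:thetaliftconnection} is sound, and you are right that weakly holomorphic ambiguity in $I^{(\mathrm{M})}$ is harmless for the Shintani statement. But your architecture inverts the paper's: the paper proves Theorem \ref{thm:shitanilift} \emph{first} and independently, by writing down the Fourier expansion of $I_h^{(\mathrm{S})}(\Ec_{2k,p},\tau)$ from \cite{alneschw21}*{Theorem 6.1} (Proposition \ref{prop:Shintanifourier}), evaluating the real quadratic traces of $\Ec_{2k,p}$ for non-square $D$ by unfolding the cycle integrals and invoking Kohnen's Kloosterman-sum identity (Proposition \ref{prop:Eisensteintraces}), converting to Pei--Wang's class numbers via Proposition \ref{prop:Kloostermanzetapeiwang}, and finally disposing of the square-indexed coefficients with a Vign\'eras-type argument ($f_1-f_2$ is a cusp form supported on square indices, hence a combination of weight $\frac12$ or $\frac32$ theta series, hence $0$ since $k>1$). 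The differential equation \eqref{eq:thetaliftconnection} is then used in the \emph{other} direction, to deduce Theorem \ref{thm:millsonlift} from Theorem \ref{thm:shitanilift}.

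The genuine gap in your plan is that its heart --- the identification of $I^{(\mathrm{M})}(\Fc_{2-2k,p},\cdot)$ with the explicit combination $p^{-k}\Fc^{+}_{\frac32-k,4}+(1-p^{-k})\Fc^{+}_{\frac32-k,4p}$ up to $\ker\xi_{\frac32-k}$ --- is left as a sketch, and the available tools do not let you shortcut it. The known Fourier expansion \cite{alneschw18}*{Theorem 5.1} presents the non-holomorphic part of the Millson lift precisely as the real quadratic traces of $\xi_{2-2k}\Fc_{2-2k,p}=(2k-1)\Ec_{2k,p}$, so ``identifying the Millson lift'' modulo $\ker\xi_{\frac32-k}$ \emph{is} the trace evaluation of Proposition \ref{prop:Eisensteintraces}; you have relabeled the hard step, not avoided it. If instead you unfold the regularized theta integral from scratch, you must justify Rankin--Selberg unfolding against a growing Eisenstein series inside the truncated-fundamental-domain regularization, and the resulting lattice sums are exactly the sums $\sum_{N\mid a}\sum_{b^2\equiv D\,(4a)}a^{-k}$ of Proposition \ref{prop:Eisensteintraces}(i)--(iii), including the delicate square-discriminant (infinite geodesic) contributions that the paper deliberately sidesteps via Vign\'eras. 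Two further points you would need to address: \cite{alneschw18}*{Theorem 5.1} assumes the shadow of the input is cuspidal, which fails for $\Fc_{2-2k,p}$, so its extension (the paper's Lemma \ref{lem:liftconstantterms}, which itself invokes \eqref{eq:thetaliftconnection}) must be supplied; and your fallback ``uniqueness from principal parts'' is not sufficient as stated for Eisenstein-type harmonic Maass forms, since one must also control the $v^{k-\frac12}$-terms at all six cusps of $\Gamma_0(4p)$ and the Bruinier--Funke pairing argument only sees the cuspidal part of the shadow.
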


\begin{rmks}
\
\begin{enumerate}
\item Bringmann, Guerzhoy and Kane \cites{bgk14, bgk15} extended the classical Shintani lift \cite{shin} to weakly holomorphic modular forms, and evaluated it on Poincar{\'e} series for $\slz$, see \cite{bgk14}*{Lemma 4.1}.
\item It is well-known that the Shintani lift of $\Ec_{2k} \coloneqq \Ec_{2k,1}$ evaluates to $\Hs_{k}$ (the classical Cohen--Eisenstein series from equation \eqref{eq:Cohendef}), see \cite{koza84}*{p.\ 240--241}, \cite{fumi}*{Theorem 9.2}, \cite{bgk14}*{Theorem 1.3, Lemma 4.1}.
\end{enumerate}
\end{rmks}

In particular, the regularized real quadratic traces (see Subsubsection \ref{subsubsec:quadratictraces}) of $\Ec_{2k,p}$ are given by the corresponding Fourier coefficients of the right hand side in Theorem \ref{thm:shitanilift}.
\begin{cor} \label{cor:shintanicor}
If $N = p$ is an odd prime, $k \in 2\N$, and $1 \leq D = \square$, then we have
\begin{multline*}
\sum_{Q \in \Qc_{p,D} \slash \Gamma_0(p)} \int_{\Gamma_0(p) \backslash S_Q}^{\reg} \Ec_{2k,p}(z) Q(z,1)^{k-1} \dm z = (-1)^{\frac{k}{2}} \frac{\Gamma(k)\zeta(k)}{2^{k-1} \pi^k \zeta(1-2k)} \\
\times \left(p^{-k} H_{k,1,1}(D) + \left(1-p^{-k}\right) \left(\frac{p-1}{p^{2k}-p} H_{k,1,p}(D) + \frac{H_{k,p,p}(D)}{1-p^{2k-1}} \right) \right).
\end{multline*}
\end{cor}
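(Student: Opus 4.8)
Corollary~\ref{cor:shintanicor} is nothing but the Fourier-coefficient reformulation of Theorem~\ref{thm:shitanilift}, so the plan is to compare $q^D$-coefficients on both sides for $1 \le D = \square$. The one ingredient that has to be imported is the explicit Fourier expansion of Alfes and Schwagenscheidt's regularized Shintani lift $I^{(\mathrm{S})}(g,\tau)$ from equation~\eqref{eq:ISdef} and \cite{alneschw21}: for $g \in H_{2k}^{!}(p)$ the coefficient of $I_h^{(\mathrm{S})}(g,\tau)$ at a positive index is, up to an index-independent normalization, a (regularized) real quadratic trace of $g$, i.e.\ a finite sum over $\Gamma_0(p)$-classes of binary quadratic forms $Q$ of the cycle integrals $\int_{\Gamma_0(p)\backslash S_Q}^{\reg} g(z)\,Q(z,1)^{k-1}\,\dm z$, weighted by $\vt{\Gamma_0(p)_Q}^{-1}$ when the stabilizer is finite. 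In the regime $D = \square$ the relevant discriminant is a perfect square, $\Gamma_0(p)_Q$ is trivial, $S_Q$ is the geodesic connecting the two rational zeros of $Q$, and the cycle integral genuinely needs the regularization recalled in Subsubsection~\ref{subsubsec:quadratictraces}; this is exactly the left-hand side of the Corollary.

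Next I would feed $g = \Ec_{2k,p}$ into this expansion and track the two operations appearing on the left of Theorem~\ref{thm:shitanilift}. The substitution $\tau \mapsto 4p\tau$ merely rescales exponents, and the trace map $\sum_{h \pmod*{2p}}$ from vector-valued forms for the Weil representation to scalar-valued forms (equation~\eqref{eq:plusspaceprojection}) adds the components; since $N = p$ is prime, the resulting scalar form of weight $k+\tfrac12$ and level $4p$ lies in Kohnen's plus space, hence is supported on exponents $D \equiv 0,1 \pmod*{4}$. Putting these together, the coefficient of $q^D$ in $\sum_{h \pmod*{2p}} I_h^{(\mathrm{S})}(\Ec_{2k,p},4p\tau)$ is precisely $\sum_{Q \in \Qc_{p,D}/\Gamma_0(p)} \int_{\Gamma_0(p)\backslash S_Q}^{\reg} \Ec_{2k,p}(z)\,Q(z,1)^{k-1}\,\dm z$, where $\Qc_{p,D}$ is nonempty (and consists of split forms) exactly because $D$ is a square.

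On the other side, the coefficient of $q^D$ of the right-hand side of Theorem~\ref{thm:shitanilift} is read off termwise from equation~\eqref{eq:Hsdef}, since the $q^D$-coefficient of $\Hs_{k,\ell,N}$ is $H_{k,\ell,N}(D)$ by definition; it therefore equals $(-1)^{k/2}\Gamma(k)\zeta(k)/(2^{k-1}\pi^k\zeta(1-2k))$ times $p^{-k}H_{k,1,1}(D) + (1-p^{-k})\big(\tfrac{p-1}{p^{2k}-p}H_{k,1,p}(D) + \tfrac{1}{1-p^{2k-1}}H_{k,p,p}(D)\big)$, which is exactly the right-hand side of the Corollary. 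Equating the two coefficient extractions yields the claimed identity.

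The main obstacle is bookkeeping rather than conceptual: one must pin down the index-independent normalization constant in the Fourier expansion of $I^{(\mathrm{S})}$ (including the factors of $\sqrt{p}$, $\Gamma(k)$, $\pi$ and $\zeta$ recorded in Theorem~\ref{thm:shitanilift}) and check that, after the rescaling $\tau \mapsto 4p\tau$ and the plus-space trace, the exponent appearing on the left of Theorem~\ref{thm:shitanilift} is $D$ itself and not $4pD$ or $D/4p$, consistently with the indexing of $\Hs_{k,\ell,N}$ on the right. One should also confirm that no contribution from the non-cuspidal nature of $\Ec_{2k,p}$ or from its $\xi_{2k}$-image spoils the positive-index coefficients, which is guaranteed by Alfes and Schwagenscheidt's analysis of the Shintani lift on $H_{2k}^{!}(p)$. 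Once the normalization fixed by Theorem~\ref{thm:shitanilift} is in hand, the Corollary is immediate.
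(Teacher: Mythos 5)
Your proposal matches the paper's own argument: Corollary \ref{cor:shintanicor} is deduced from Theorem \ref{thm:shitanilift} by comparing the square-indexed Fourier coefficients, with the left-hand coefficients identified as regularized real quadratic traces via the Fourier expansion of the Shintani lift (Proposition \ref{prop:Shintanifourier} in the paper) after the rescaling $\tau \mapsto 4p\tau$ and the plus-space trace over $h$, and the right-hand coefficients read off from the definition of $\Hs_{k,\ell,N}$. The only loose end is the one you already flag, namely the overall factor $(-1)^k\sqrt{p}$ multiplying the trace in the lift's expansion, which cancels against the $\sqrt{p}$ in Theorem \ref{thm:shitanilift} exactly as needed.
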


\begin{rmk}
An excellent overview of similar formulas can be found in further work by Duke, Imamo\={g}lu and T\'{o}th \cite{dit18}.
\end{rmk}

We move to the left-hand side in equation \eqref{eq:thetaliftconnection} and evaluate the regularized Millson lift of $\Fc_{2-2k,p}$ projected to Kohnen's plus space.
\begin{thm} \label{thm:millsonlift}
Let $N = p$ be an odd prime and $k \in 2\N$. Then we have
\begin{multline*}
\sum_{h \pmod*{2p}} I_h^{(\mathrm{M})}\left(\Fc_{2-2k,p}, 4p\tau\right) \\
= 3 (-1)^{\frac{k}{2}-1} \frac{\Gamma(k)\zeta(k)}{2^{k} \pi^k} (4p)^{k-\frac{1}{2}} \left(p^{-k} \Fc_{\frac{3}{2}-k,4}^{+}(\tau)  + \left(1-p^{-k}\right) \Fc_{\frac{3}{2}-k,4p}^{+}(\tau)\right).
\end{multline*}
\end{thm}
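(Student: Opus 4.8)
The plan is not to evaluate the regularized Millson integral directly, but to bootstrap from Theorems~\ref{thm:CEpreimage} and~\ref{thm:shitanilift} by means of the differential equation~\eqref{eq:thetaliftconnection}. Since $\xi_{\frac32-k}$ is $\C$-linear, it commutes with the trace map $\sum_{h\pmod*{2p}}(\cdot)_h$, and it obeys the elementary scaling rule $\xi_{\frac32-k}\bigl(G(4p\tau)\bigr)=(4p)^{k-\frac12}\bigl(\xi_{\frac32-k}G\bigr)(4p\tau)$. Together with~\eqref{eq:Eisensteinshadow}, which gives $\xi_{2-2k}\Fc_{2-2k,p}=(2k-1)\Ec_{2k,p}$, equation~\eqref{eq:thetaliftconnection} applied to $f=\Fc_{2-2k,p}$ (with $N=p$) therefore yields
\begin{align*}
\xi_{\frac32-k}\sum_{h\pmod*{2p}} I_h^{(\mathrm{M})}\!\left(\Fc_{2-2k,p},4p\tau\right)=-(4p)^{k-\frac12}\,\frac{2k-1}{2\sqrt p}\sum_{h\pmod*{2p}} I_h^{(\mathrm{S})}\!\left(\Ec_{2k,p},4p\tau\right).
\end{align*}
Inserting Theorem~\ref{thm:shitanilift} on the right expresses $\xi_{\frac32-k}$ of the left-hand side of the assertion explicitly as a linear combination of $\Hs_{k,1,1}$, $\Hs_{k,1,p}$ and $\Hs_{k,p,p}$. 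On the other hand, applying $\xi_{\frac32-k}$ to the proposed right-hand side and invoking Theorem~\ref{thm:CEpreimage} for level $4$ (contributing $\Hs_{k,1,1}$ with coefficient $1$) and for level $4p$ (contributing $\tfrac{p-1}{p^{2k}-p}\Hs_{k,1,p}+\tfrac{1}{1-p^{2k-1}}\Hs_{k,p,p}$) gives a linear combination of the same three series; comparing the scalar prefactors (the factors $2k-1$, $3$, $\zeta(1-2k)$, $\Gamma(k)\zeta(k)/(2^k\pi^k)$, the power $(4p)^{k-\frac12}$ and the sign $(-1)^{k/2+1}=(-1)^{k/2-1}$ matching up appropriately) shows that the two $\xi_{\frac32-k}$-images agree.

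Hence the difference $D$ of the two sides of the claimed identity is annihilated by $\xi_{\frac32-k}$, so it is holomorphic on $\H$, transforms with weight $\frac32-k$ on some congruence subgroup, and lies in Kohnen's plus space --- because the trace map does so for prime level and $\mathrm{pr}^{+}$ does so by construction. Thus $D$ is a weakly holomorphic modular form of weight $\frac32-k<0$, and it remains only to prove that $D$ has no holomorphic principal part at any cusp; once this is established, $D$ is a holomorphic modular form of negative weight and hence vanishes identically, which is exactly the assertion. (The non-holomorphic, polynomially growing terms on the two sides then match automatically.)

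The cusp analysis is the step I expect to be the main obstacle. For $\Fc^{+}_{\frac32-k,4}$ and $\Fc^{+}_{\frac32-k,4p}$ it is immediate: each translate $v^{k-\frac12}\vert_{\frac32-k}\gamma$ has at most polynomial growth and the projection $\mathrm{pr}^{+}$ preserves this, so these series carry no holomorphic principal part anywhere. For the regularized Millson lift I would argue via the explicit Fourier expansion of $I^{(\mathrm{M})}$ due to Alfes and Schwagenscheidt~\cites{alneschw21, alfesthesis}, whose (regularized) coefficients are traces of $\Fc_{2-2k,p}$ over CM points and geodesic cycles: since $\Fc_{2-2k,p}$ is itself of moderate growth --- its only unbounded contribution being the $v^{2k-1}$-term, with no term $q^{-n}$ for $n>0$ --- these traces are finite after regularization and produce no principal part, so $\sum_{h\pmod*{2p}} I_h^{(\mathrm{M})}(\Fc_{2-2k,p},4p\tau)$ has moderate growth at every cusp. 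Alternatively one can estimate the regularized theta integral directly, playing the Gaussian decay of the Millson theta kernel against the polynomial growth of $\Fc_{2-2k,p}$ uniformly as $\tau$ tends to a cusp. Either route shows that $D$ has trivial principal part, which forces $D=0$ and completes the proof.
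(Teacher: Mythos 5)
Your proposal is correct and shares the paper's overall skeleton: show that the two sides have the same image under $\xi_{\frac{3}{2}-k}$, conclude that their difference lies in $M^{!}_{\frac{3}{2}-k}(4p)$ and satisfies the plus space condition, and then kill it by a cusp analysis plus the vanishing of holomorphic forms of negative weight (the paper quotes Cohen--Oesterl\'e for $M_{\frac{3}{2}-k}(4p)=\{0\}$). Where you genuinely diverge is in how $\xi_{\frac{3}{2}-k}$ of the Millson side is computed: you apply the Alfes--Schwagenscheidt identity \eqref{eq:thetaliftconnection} together with the scaling rule for $\tau\mapsto 4p\tau$ and then quote Theorem \ref{thm:shitanilift} wholesale, whereas the paper reads the non-holomorphic part off the explicit Fourier expansion in Proposition \ref{prop:Millsonfourier} and re-inserts the trace evaluations of Corollary \ref{cor:Eisensteintraces}. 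The two routes are equivalent in substance --- Theorem \ref{thm:shitanilift} is itself proved from Corollary \ref{cor:Eisensteintraces}, and \eqref{eq:thetaliftconnection} already enters the paper's argument through Lemma \ref{lem:liftconstantterms} --- but yours is shorter, makes the matching of the $v^{k-\frac{1}{2}}$-terms automatic, and your sign bookkeeping ($(-1)^{\frac{k}{2}+1}=(-1)^{\frac{k}{2}-1}$, the factor $-(4p)^{k-\frac12}\frac{2k-1}{2\sqrt p}\cdot\sqrt p$ against $\frac{2k-1}{3\zeta(1-2k)}$ from Theorem \ref{thm:CEpreimage}) checks out. It does render the paper's remark that Theorems \ref{thm:CEpreimage}, \ref{thm:shitanilift} and \ref{thm:millsonlift} ``verify'' \eqref{eq:thetaliftconnection} tautological, but there is no circularity, since \eqref{eq:thetaliftconnection} is an external input. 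The one place to tighten is the cusp analysis, which you rightly flag as the crux: a constant principal part of $I_h^{(\mathrm{M})}$ at $i\infty$ does not by itself control the traced-down scalar form at the cusps of $\Gamma_0(4p)$ inequivalent to $i\infty$. The paper gets this from the vector-valued structure --- only the $h=0$ component can carry a constant term (Corollary \ref{cor:cuspparameters}), combined with \cite{bor98}*{Example 2.2} --- and obtains the boundedness of $\Fc_{\frac{3}{2}-k,4p}^{+}$ at the finite cusps by adapting Kubota's argument rather than by bounding individual translates. These are routine repairs (and your alternative of estimating the regularized theta integral near each cusp would also work), so this is a matter of detail rather than a gap.
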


\begin{rmk}
Combining Theorems \ref{thm:CEpreimage}, \ref{thm:shitanilift} and \ref{thm:millsonlift} verifies equation \eqref{eq:thetaliftconnection} for the input $f = \Fc_{2-2k,p}$.
\end{rmk}

In particular, Theorem \ref{thm:millsonlift} yields an evaluation of the imaginary quadratic traces (see equation \eqref{eq:trimagdef}) of the \emph{weight $0$ and level $p$ non-holomorphic Eisenstein series}
\begin{align} \label{eq:Fc0def}
\Fc_{0,N}(\tau,k) \coloneqq \sum_{\gamma \in \Gamma_{\infty} \backslash \Gamma_0(N)} \im(\gamma\tau)^{k}.
\end{align}
To describe this, we let $\Ks_{\frac{3}{2}-k,4N}^{+}(m,n;s)$ be the Kloosterman plus zeta function appearing in the Fourier expansion of $\Fc_{\frac{3}{2}-k,4N}^{+}$ about $i\infty$, which is defined in equation \eqref{eq:Ksdef}.
\begin{cor} \label{cor:millsoncor}
If $N = p$ is an odd prime, $k \in 2\N$, and $D > 0$ such that $-D$ is a discriminant, then we have
\begin{multline*}
\sum_{Q \in \Qc_{p,-D} \slash \Gamma_0(p)} \frac{1}{\vt{\Gamma_0(p)_Q}} \Fc_{0,p}\left(\tau_Q,k\right) = (-1)^{\frac{k}{2}-1} 2^{k+\frac{1}{2}}i^{k-\frac{3}{2}} \zeta(k) D^{\frac{k}{2}} \\
\times \left(p^{-k} \Ks_{\frac{3}{2}-k,4}^{+}\left(0,D;k+\frac{1}{2}\right)  + \left(1-p^{-k}\right) \Ks_{\frac{3}{2}-k,4p}^{+}\left(0,D;k+\frac{1}{2}\right)\right).
\end{multline*}
\end{cor}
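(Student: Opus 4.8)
The plan is to deduce Corollary~\ref{cor:millsoncor} from Theorem~\ref{thm:millsonlift} by comparing Fourier coefficients on both sides, exactly as Corollary~\ref{cor:shintanicor} is deduced from Theorem~\ref{thm:shitanilift}. The one genuinely new point is that the imaginary quadratic traces packaged into the Millson lift of the weight $2-2k$ form $\Fc_{2-2k,p}$ are traces of the weight $0$ Eisenstein series $\Fc_{0,p}(\cdot,k)$ rather than of $\Fc_{2-2k,p}$ itself, and this discrepancy is bridged by the Maass raising operator.

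First I would record the exact identity $R^{k-1}\Fc_{2-2k,p} = (k-1)!\,\Fc_{0,p}(\cdot,k)$, where $R^{k-1} := R_{-2}\circ\cdots\circ R_{2-2k}$ is the $(k-1)$-fold iterate of the raising operators $R_\kappa = 2i\partial_\tau + \kappa v^{-1}$ carrying weight $2-2k$ up through weight $0$. Since $R_\kappa v^{s} = (s+\kappa)v^{s-1}$, applying $R$ a total of $k-1$ times to the seed $v^{2k-1}$ produces $(k-1)!\,v^{k}$ without any correction terms; because $R_\kappa$ intertwines $\vert_\kappa\gamma$ with $\vert_{\kappa+2}\gamma$ and all of the intermediate Poincar\'e-type series converge absolutely and locally uniformly (each exhibits the same $\vert c\tau+d\vert^{-2k}$ decay), the operator may be applied termwise and the identity follows. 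Consequently the left-hand side of Corollary~\ref{cor:millsoncor} equals $\tfrac{1}{(k-1)!}$ times the imaginary quadratic trace of $R^{k-1}\Fc_{2-2k,p}$ along $\Qc_{p,-D}/\Gamma_0(p)$ in the sense of equation~\eqref{eq:trimagdef}.

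Next I would invoke the defining Fourier expansion of the regularized Millson theta lift (equations~\eqref{eq:IMdef}, \eqref{eq:trimagdef}): for a weight $2-2k$ harmonic input $f$, the Fourier coefficient of $I_h^{(\mathrm{M})}(f,\tau)$ attached to an index $D$ with $-D$ a discriminant is, up to the explicit constant in \eqref{eq:trimagdef}, precisely the imaginary quadratic trace of $R^{k-1}f$ along $\Qc_{p,-D}/\Gamma_0(p)$. Summing over $h\pmod*{2p}$ and substituting $\tau\mapsto 4p\tau$ then rewrites the left-hand side of Corollary~\ref{cor:millsoncor} as an explicit constant multiple of the $D$-th Fourier coefficient of $\sum_{h\pmod*{2p}}I_h^{(\mathrm{M})}(\Fc_{2-2k,p},4p\tau)$. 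By Theorem~\ref{thm:millsonlift}, this coefficient is the corresponding linear combination, with weights $p^{-k}$ and $1-p^{-k}$, of the $D$-th Fourier coefficients of $\Fc_{\frac{3}{2}-k,4}^{+}(\tau)$ and $\Fc_{\frac{3}{2}-k,4p}^{+}(\tau)$. Finally, the Fourier expansion of $\Fc_{\frac{3}{2}-k,4N}^{+}$ about $i\infty$ (equation~\eqref{eq:Ksdef}) expresses its $D$-th coefficient as an explicit elementary constant --- assembled from $\Gamma$-factors and powers of $2$, $\pi$, $i$ and $D$ --- times the Kloosterman plus zeta function $\Ks_{\frac{3}{2}-k,4N}^{+}(0,D;k+\tfrac12)$, the first argument $0$ being the index of the seed $v^{k-\frac12}$ of $\Fc_{\frac{3}{2}-k,4N}^{+}$. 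Collecting the constants from the three steps --- in particular the $(k-1)!=\Gamma(k)$ introduced by the raising operator cancels the $\Gamma(k)$ in Theorem~\ref{thm:millsonlift} --- yields the claimed identity with prefactor $(-1)^{\frac{k}{2}-1}2^{k+\frac12}i^{k-\frac32}\zeta(k)D^{\frac{k}{2}}$.

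The remaining work is \emph{bookkeeping} rather than conceptual. The main obstacle is to pin down exactly which Fourier coefficient of the weight $\frac{3}{2}-k$ harmonic Maass form $I^{(\mathrm{M})}(\Fc_{2-2k,p},\cdot)$ --- holomorphic versus non-holomorphic part, and index $D$ versus $-D$ --- carries the imaginary quadratic trace, and to track how the substitution $\tau\mapsto 4p\tau$ interacts with the plus-space trace map $\sum_{h\pmod*{2p}}$ and with the (phase-carrying) normalisation of $\Ks_{\frac{3}{2}-k,4N}^{+}$ fixed in \eqref{eq:Ksdef}. The hypothesis that $-D$ be a discriminant is precisely what makes $\Qc_{p,-D}/\Gamma_0(p)$ the correct nonempty indexing set and is consistent with the support of the plus-space Fourier coefficients.
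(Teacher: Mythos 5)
Your proposal follows essentially the same route as the paper's proof: the identity $R_{2-2k}^{k-1}\Fc_{2-2k,p} = (k-1)!\,\Fc_{0,p}(\cdot,k)$ from equation \eqref{eq:Eisensteinraising} and the intertwining of raising and slash operators, then rewriting the holomorphic part of the Millson lift in Proposition \ref{prop:Millsonfourier} as imaginary quadratic traces of $\Fc_{0,p}(\cdot,k)$, and finally comparing Fourier coefficients against Theorem \ref{thm:millsonlift} via Lemma \ref{lem:Eisensteinfourier} (iii), with the $\Gamma(k)=(k-1)!$ cancellation you note. The only slip is citing \eqref{eq:Ksdef} where the relevant statement is the Fourier expansion in Lemma \ref{lem:Eisensteinfourier} (iii), but the content you invoke is the correct one.
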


\begin{rmks}
\
\begin{enumerate}
\item This can be viewed as a level $p$ analog of the corresponding case in \cite{dit11annals}*{Proposition 4}.
\item We evaluate $\Ks_{\frac{3}{2}-k,4N}^{+}(0,n;s)$ explicitly in Lemma \ref{lem:shimurasturmresult}.
\end{enumerate}

\end{rmks}

\subsection{Outline of the paper}
The paper is organized as follows. We summarize the overall framework in Section \ref{sec:prelim}. Section \ref{sec:proofCE} is devoted to the proof of Theorem \ref{thm:CEpreimage}. The purpose of Section \ref{sec:liftfourier} is to work out the Fourier expansions of both $I^{(\mathrm{S})}\left(\Ec_{2k,p}, \tau\right)$ and $I^{(\mathrm{M})}\left(\Fc_{2-2k,p}, \tau\right)$ following Alfes and Schwagenscheidt. The content of Section \ref{sec:proofShintani} is an evaluation of the real quadratic traces of $\Ec_{2k,p}$ for non-square discriminants. This enables us to prove Theorem \ref{thm:shitanilift} as well as Corollary \ref{cor:shintanicor}. The subject of Section \ref{sec:proofMillson} is the proof of both Theorem \ref{thm:millsonlift} and Corollary \ref{cor:millsoncor} by employing Theorems \ref{thm:CEpreimage}, \ref{thm:shitanilift} and equation \eqref{eq:thetaliftconnection}. We conclude by collecting some questions for further work in Section \ref{sec:questions}.

\section*{Acknowledgements}
We would like to thank Scott Ahlgren, Michael Griffin and Martin Raum for helpful conversations as well as Olivia Beckwith, Kathrin Bringmann, Nikolaos Diamantis, Eleanor McSpirit, and Larry Rolen for valuable feedback on an earlier version of this paper. Moreover, we thank the anonymous referee for many helpful comments.

\section*{Notation}

\subsection*{General modular forms notation}
\begin{itemize}
\item $k > 1$ is an integer, $\kappa \in \frac{1}{2}\Z$ is an integer or a half integer,
\item $\H$ is the complex upper half plane,
\item $\tau = u+iv \in \H$, $z = x+iy \in \H$, $q = e^{2\pi i \tau}$, $\dm\mu(z) \coloneqq \frac{\dm x \dm y}{y^2}$,
\item $N \in \N$ is odd and square-free, 
\item $\ell$ is a divisor of $N$, 
\item $p$ is an odd prime, 
\item $\Gamma(s,z)$ is the incomplete $\Gamma$-function, $\Gamma(s)$ is the $\Gamma$-function, see equation \eqref{eq:Gammadef},
\item $\af \in \Q \cup \{i\infty\}$ is a cusp with scaling matrix $\sigma_{\af} \in \slz$,
\item $\Gamma_0(N)$ is the Hecke congruence subgroup of $\slz$,
\item $\Gamma_{\infty} = \left\{\pm \left(\begin{smallmatrix} 1 & n \\ 0 & 1 \end{smallmatrix}\right) \colon n \in \Z \right\} \leq \Gamma_0(N)$ is the stabilizer of the cusp $i\infty$,
\item $\left(\frac{a}{b}\right)$ is the Kronecker symbol,
\item $\chi_{d}(n) = \left(\frac{d}{n}\right)$ is a Dirichlet character,
\item $\id$ is the principal character of modulus $1$,
\item $L(s,\chi)$ is the (complete) Dirichlet $L$-function, 
\item $\zeta(s) = L(s,\id)$ is the (analytic continuation of the) Riemann zeta function,
\item $L_N(s,\chi)$ the incomplete Dirichlet $L$-function, see equation \eqref{eq:LNdef},
\item $\mu(n)$ the M{\"o}bius function,
\item $\sigma_{\ell,N,s}(n)$ and $\sigma_{N,s}(n)$ are certain modifications of the standard sum of powers of divisors function $\sigma_s$, see equation \eqref{eq:divisordef},
\item $M_{k}(N)$ is the $\C$-vector space of holomorphic modular forms of weight $k$ for $\Gamma_0(N)$,
\item $S_{k}(N)$ is the $\C$-vector space of cusp forms of weight $k$ for $\Gamma_0(N)$,
\item $M_{k}^{!}(N)$ is the $\C$-vector space of weight $k$ weakly holomorphic modular forms for $\Gamma_0(N)$,
\item $E_{k}^{+}(N) \subseteq M_{k}(N)$ is the Eisenstein subspace of forms satisfying the plus space condition,
\item $H_{\kappa}^{!}(N)$ is the $\C$-vector space of harmonic Maass forms of weight $\kappa$ for $\Gamma_0(N)$,
\item $\xi_k = 2iv^k\overline{\frac{\partial}{\partial\overline{\tau}}}$ is the $\xi_{\kappa}$-operator, see equation \eqref{eq:xidef},
\item $R_{\kappa}$ and $R_{\kappa}^n$ is the (iterated) Maass raising operator, see equation \eqref{eq:Maassraising},
\item $\Delta_k = -\xi_{2-k}\xi_k$ is the hyperbolic Laplace operator, see equation \eqref{eq:Deltasplitting},
\item $\cdot \vert_{k}\cdot$ resp.\ $\cdot \vert_{k, L}\cdot$ is the Petersson slash operator, see equations \eqref{eq:slashdef} and \eqref{eq:PeterssonVVdef},
\item $\cdot\vert\mathrm{pr}^{+}$ is the projection operator into Kohnen's plus space, see \cite{koh85}*{Proposition 3}, \cite{thebook}*{Proposition 6.7} for example,
\item $W_N = \left(\begin{smallmatrix} 0 & -1 \\ N & 1 \end{smallmatrix}\right)$ is the Fricke-involution.
\end{itemize}

\subsection*{Scalar-valued forms and their Fourier coefficients}
\begin{itemize}
\item $\Fc_{2-2k,N}(\tau)$ is the weight $2-2k$ level $4N$ Maass--Eisenstein series, see equation \eqref{eq:Fcdef},
\item $\Fc_{\frac{3}{2}-k,4N}^{+}(\tau)$ is the projection of the weight $\frac{3}{2}-k$ level $4N$ Maass--Eisenstein series to Kohnen's plus space, see equation \eqref{eq:Fcpdef},
\item $\Fc_{0,N}(\tau,s)$ is the weight $0$ level $N$ Maass--Eisenstein series with spectral parameter $s$, see equation \eqref{eq:Fc0def},
\item $\Ec_{2k,N}(\tau)$ is the weight $2k$ level $N$ holomorphic Eisenstein series, see equation \eqref{eq:Ecdef},
\item $\Hs_{k,\ell,N}(\tau)$ are the generalized Cohen--Eisenstein series, see equation \eqref{eq:Hsdef},
\item $H_{k,\ell,N}(n)$, are the generalized Hurwitz class numbers, see equations \eqref{eq:HNNdef}, \eqref{eq:HellNdef}, 
\item $H(k,n) = H_{k,1,1}(n)$ are Cohen's numbers, 
\item $H(n) = H(1,n) = H_{1,1,1}(n)$ are the Hurwitz class numbers,
\item $c_f^{\pm}(n)$ are the Fourier coefficients about $i\infty$ of the holomorphic ($+$) or non-holomorphic ($-$) part of a harmonic Maass form $f$, see equation \eqref{eq:hmffourier},
\item $K_{\kappa}(m,n;c)$ is the half integral weight Kloosterman sum, see equation \eqref{eq:Kloostermansumdef},
\item $\Ks_{N}(m,n;s)$ is the usual Kloosterman zeta function associated to the cusp $i\infty$ and with no multiplier, see equation \eqref{eq:KsNdef},
\item $\widetilde{\Ks}_{N}(m,n;s)$ is the modified Kloosterman zeta function associated to the cusp $0$ and with no multiplier, see equation \eqref{eq:KsNtildedef},
\item $\Ks_{k,4N}^{+}(m,n;s)$ is the plus space Kloosterman zeta function associated to the cusp $i\infty$ and with the theta multiplier, see equation \eqref{eq:Ksdef}.
\end{itemize}

\subsection*{Vector-valued notation, theta lifts, quadratic traces}
\begin{itemize}
\item $V$ is the three-dimensional space of traceless $2\times2$ matrices with rational entries,
\item $L \subseteq V$ is the lattice associated to $\Gamma_0(N)$, see equation \eqref{eq:latticedef},
\item $L'$ is the dual lattice of $L$, see equation \eqref{eq:latticedef},
\item $L^{-}$ is the lattice $L$ equipped with the quadratic form $-Q$ instead of $Q$,
\item $\mathrm{Mp}_2(\Z)$ is the metaplectic double cover of $\slz$, see equation \eqref{eq:Mp2Zdef},
\item $\rho_{L}$ is the Weil representation associated to $L$, $\rho_{L^{-}}$ is the dual Weil representation, see equation \eqref{eq:Weildef},
\item $\ef_{h}$ with $h \in L'\slash L$ is the standard basis of the group ring $\C[L'\slash L]$,
\item $\alpha_{\af}$, $\beta_{\af}$, $k_{\af}$, $m_{\af}$, $d_{\af}$, $\kappa_{\af}$ are certain parameters associated to a cusp $\af$, which we determine in Lemma \ref{lem:cuspparameters} and Corollary \ref{cor:cuspparameters} for $\af \in \{0,i\infty\}$,
\item $\Theta^{(\mathrm{M})}(\tau,z)$ is the Millson theta function, see equation \eqref{eq:ThetaMdef},
\item $\Theta^{(\mathrm{S})}(\tau,z)$ is the Shintani theta function, see equation \eqref{eq:ThetaSdef},
\item $I^{(S)}(f,\tau)$ is the regularized Shintani lift of $f$, see equation \eqref{eq:ISdef}. Its components are denoted by $I_h^{(S)}(f,\tau)$,
\item $I^{(M)}(f,\tau)$ is the regularized Millson lift of $f$, see equation \eqref{eq:IMdef}. Its components are denoted by $I_h^{(M)}(f,\tau)$,
\item $\Qc_{N,D}$ and $\Qc_{N,D,h}$ denote certain sets of integral binary quadratic forms of discriminant $D \in \Z \setminus \{0\}$, see equation \eqref{eq:Quadraticformssets},
\item $\tau_Q \in \H$ denotes the Heegner point associated to the integral binary quadratic form $Q \in \Qc_{N,D}$, see equation \eqref{eq:SQtauQdef},
\item $S_Q \subseteq \H$ denotes the geodesic associated to the integral binary quadratic form $Q \in \Qc_{N,D}$, see equation \eqref{eq:SQtauQdef},
\item $\tr(f;D,h)$ and $\tr(f;D)$ denote the imaginary quadratic ($D < 0$) resp.\ (regularized) real quadratic ($D > 0$) trace of the function $f$ with respect to the sets $\Qc_{N,D,h}$ and $\Qc_{N,D}$, see equations \eqref{eq:trimagdef} and \eqref{eq:trrealdef}.
\end{itemize}

\section{Preliminaries} \label{sec:prelim}

\subsection{Modular forms and harmonic Maass forms}

Let $\kappa \in \frac{1}{2}\Z$. We choose the principal branch of the complex square-root throughout. Let
\begin{align*}
\gamma &= \left(\begin{matrix} a & b \\ c& d \end{matrix}\right) \in \begin{cases}
\slz & \text{if } \kappa \in \Z, \\
\Gamma_0(4) & \kappa \in \frac{1}{2}+\Z,
\end{cases}
\qquad
\varepsilon_d \coloneqq \begin{cases}
1 & \text{if } d \equiv 1 \pmod*{4}, \\
i & \text{if } d \equiv 3 \pmod*{4},
\end{cases}
\end{align*}
for odd integers $d$ (this is guaranteed whenever $\gamma \in \Gamma_0(4)$) and $\left(\frac{c}{d}\right)$ denote the Kronecker symbol. Then, the \emph{Petersson slash operator} is defined as 
\begin{align} \label{eq:slashdef}
f\vert_{\kappa}\gamma(\tau) \coloneqq \begin{cases}
(c\tau+d)^{-\kappa} f(\gamma\tau) & \text{if } \kappa \in \Z, \\
\left(\frac{c}{d}\right)\varepsilon_d^{2\kappa}(c\tau+d)^{-\kappa} f(\gamma\tau) & \text{if } \kappa \in \frac{1}{2}+\Z.
\end{cases}
\end{align}
Furthermore, the \emph{weight $\kappa$ hyperbolic Laplace operator} is given by
\begin{align} \label{eq:Deltasplitting}
\Delta_{\kappa} \coloneqq - \xi_{2-\kappa} \xi_{\kappa} = -v^2\left(\frac{\partial^2}{\partial u^2}+\frac{\partial^2}{\partial v^2}\right) + i\kappa v\left(\frac{\partial}{\partial u} + i\frac{\partial}{\partial v}\right).
\end{align}
We define the modular objects that appear in this paper. 
\begin{defn} \label{defn:hmfdef}
Let $\mathcal{N} \in \N$. Suppose that $4 \mid \mathcal{N}$ whenever $\kappa \in \frac{1}{2} + \Z$. Let $f \colon \H \to \C$ be a smooth function.
\begin{enumerate}[label={\rm (\alph*)}]
\item We call $f$ a \emph{(holomorphic) modular form of weight $\kappa$ and level $\mathcal{N}$} if $f$ satisfies the following conditions:
\begin{enumerate}[label={\rm (\roman*)}]
\item We have $f\vert_{\kappa} \gamma = f$ for all $\gamma \in \Gamma_0(\mathcal{N})$,
\item the function $f$ is holomorphic on $\H$,
\item the function $f$ is holomorphic at every cusp of $\Gamma_0(\mathcal{N})$.
\end{enumerate}
We denote the vector space of modular forms by $M_{\kappa}(\mathcal{N})$.
\item If $f$ is a holomorphic modular form of weight $\kappa$ and level $\mathcal{N}$ that vanishes at all cusps of $\Gamma_0(\mathcal{N})$, then we call $f$ a \emph{cusp form of weight $\kappa$ and level $\mathcal{N}$}. We denote the vector space of cusp forms by $S_{\kappa}(N)$.
\item If $f$ is a holomorphic modular form of weight $\kappa$ and level $\mathcal{N}$ that is permitted to have poles at the cusps of $\Gamma_0(\mathcal{N})$, then we call $f$ a \emph{weakly holomorphic modular form of weight $\kappa$ and level $\mathcal{N}$}. We denote the vector space of cusp forms by $M_{\kappa}^{!}(N)$.
\item We call $f$ a \emph{harmonic Maass form of weight $\kappa$ and level $\mathcal{N}$} if $f$ satisfies the following conditions:
\begin{enumerate}[label={\rm  (\roman*)}]
\item We have $f\vert_{\kappa} \gamma = f$ for all $\gamma \in \Gamma_0(\mathcal{N})$,
\item we have $\Delta_{\kappa}f = 0$,
\item the function $f$ is of at most linear exponential growth towards all cusps of $\Gamma_0(\mathcal{N})$.
\end{enumerate}
We denote the vector space of harmonic Maass forms by $H_{\kappa}^{!}(\mathcal{N})$.
\end{enumerate}
Forms in \emph{Kohnen's plus space} are half integral weight forms, whose Fourier expansions are supported on indices $n$ satisfying $(-1)^{\kappa-\frac{1}{2}} n \equiv 0$, $1 \pmod*{4}$.
\end{defn}

The \emph{incomplete $\Gamma$-function} is given by
\begin{align} \label{eq:Gammadef}
\Gamma(s,z) \coloneqq \int_z^{\infty} t^{s-1}\mathrm{e}^{-t} \dm t, \qquad \re(s) > 0, \qquad z \in \C.
\end{align}
The incomplete $\Gamma$-function is multi-valued as a function of $s$. Each branch can be analytically continued in $s$ with removable singularities at $\Z_{\geq 0}$ whenever $z \neq 0$. We impose that the path of integration has to exclude $\R_{\leq 0}$, which yields the principal branch of the incomplete $\Gamma$-function. We refer the reader to \cite{brdieh}*{Section 2.2} for more details on this. As a function of the second variable, the incomplete $\Gamma$-function has the asymptotic behavior
\begin{align*}
\Gamma(s,v) \sim v^{s-1}\mathrm{e}^{-v}, \qquad \vert v \vert \to \infty, \qquad v \in \R,
\end{align*}
see \cite{table}*{item 8.3357} or \cite{nist}*{\S 8.11}. 

Since $\Delta_{\kappa}$ splits into $-\xi_{2-\kappa} \xi_{\kappa}$ by equation \eqref{eq:Deltasplitting}, the Fourier expansion of a weight $\kappa$ harmonic Maass form $f$ splits naturally into a \emph{holomorphic} and a \emph{non-holomorphic} part. If $\kappa \neq 1$, then these parts are of the shape
\begin{align} \label{eq:hmffourier}
f(\tau) = \sum_{n \gg -\infty} c_f^{+}(n) q^n + c_f^{-}(0)v^{1-\kappa} + \sum_{\substack{n \ll \infty \\ n \neq 0}} c_f^{-}(n)\Gamma(1-\kappa,-4\pi nv)q^n,
\end{align}
where the notation $\sum_{n \gg -\infty}$ abbreviates $\sum_{n \geq m_f}$ for some $m_f \in \Z$. The notation $\sum_{n \ll \infty}$ is defined analogously and similar expansions hold at the other cusps. The function
\begin{align*}
\sum_{n \leq 0} c_f^{+}(n) q^n
\end{align*}
is called the \emph{principal part of $f$}. Finally, we recall \emph{the (iterated) Maass raising operator} (see \cite{thebook}*{Section 5.1} for example)
\begin{align} \label{eq:Maassraising}
R_{\kappa} \coloneqq 2i\frac{\partial}{\partial\tau} + \frac{\kappa}{v}, \qquad R_{\kappa}^n \coloneqq R_{\kappa + 2(n-1)} \circ \ldots \circ R_{\kappa+2} \circ R_{\kappa}, \qquad n \in \N.
\end{align}
A quick computation shows that
\begin{align} \label{eq:Eisensteinraising}
R_{\kappa} v^{j} = (j+\kappa)v^{j-1}, \qquad R_{2-2k}^{k-1} v^{2k-1} = R_{-2} \circ \ldots \circ R_{2-2k} v^{2k-1} = (k-1)!v^k.
\end{align}

\subsection{Fourier expansions of Eisenstein series}
We collect the Fourier expansions of the various Eisenstein series appearing in this paper. To this end, we let $\re(s) > 1$ and define the usual \emph{level $N$ Kloosterman zeta function}
\begin{align} \label{eq:KsNdef}
\Ks_{N}(m,n;s) \coloneqq \sum_{\substack{c > 0 \\ N \mid c}} \frac{1}{c^s} \sum_{\substack{r \pmod*{c} \\ \gcd(c,r)=1}} e^{2\pi i\frac{mr^*+nr}{c}}, \qquad rr^* \equiv 1 \pmod*{c},
\end{align}
the \emph{modified level $N$ Kloosterman zeta function}
\begin{align} \label{eq:KsNtildedef}
\widetilde{\Ks}_{N}(m,n;s) \coloneqq \sum_{\substack{d > 0 \\ \gcd(d,N)=1}} \frac{1}{d^s} \sum_{\substack{r \pmod*{d} \\ \gcd(r,d)=1}} e^{2\pi i \frac{mr^*+nr}{d}},
\end{align}
and the \emph{plus space level $4N$ Kloosterman zeta function}
\begin{align} \label{eq:Ksdef}
\Ks_{\kappa,4N}^{+}(m,n;s) \sum_{c > 0} \frac{1+\left(\frac{4}{c}\right)}{(4Nc)^{s}} \sum_{\substack{r \pmod*{4Nc} \\ \gcd(4Nc,r)=1}} \left(\frac{4Nc}{r}\right)\varepsilon_r^{2\kappa} e^{2\pi i \frac{mr^*+nr}{4Nc}}.
\end{align}

We collect some evaluations first. To this end, we cite the following result.
\begin{lemma}[\cite{bemo2}*{Lemma 7.1}, \cite{mmrw}*{Lemma 7.1}] \label{lem:levellemma}
Let $N$ be square-free. Then we have
\begin{align*} 
\sum_{d \mid N} \mu(d) \chi_d(c)^2 = \begin{cases}
1 & \text{if } N \mid c, \\
0 & \text{if } N \nmid c.
\end{cases}
\end{align*}
\end{lemma}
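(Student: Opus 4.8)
The plan is to reduce the alternating divisor sum to an Euler product over the primes dividing $N$ and evaluate each local factor, which is immediate. The starting observation is that $\chi_d(c)^2 = \left(\frac{d}{c}\right)^2$ takes only the values $0$ and $1$: since every $d \mid N$ is odd and square-free, the Kronecker symbol $\left(\frac{d}{c}\right)$ equals $\pm 1$ when $\gcd(c,d) = 1$ and vanishes otherwise, so $\chi_d(c)^2$ is exactly the Iverson bracket $[\gcd(c,d) = 1]$ (equal to $1$ when the condition holds and $0$ otherwise).

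Next I would invoke multiplicativity. Both $\mu$ and $d \mapsto [\gcd(c,d) = 1]$ are multiplicative functions of $d$, hence so is the product $\mu(d)\chi_d(c)^2$. Because $N$ is square-free, each divisor $d \mid N$ is a product of distinct primes $p \mid N$, and the divisor sum factors as
\[
\sum_{d \mid N} \mu(d)\chi_d(c)^2 = \prod_{\substack{p \text{ prime} \\ p \mid N}} \bigl(1 + \mu(p)\chi_p(c)^2\bigr) = \prod_{\substack{p \text{ prime} \\ p \mid N}} \bigl(1 - [\gcd(p,c)=1]\bigr).
\]
For a prime $p$ we have $1 - [\gcd(p,c) = 1] = [p \mid c]$, so the product collapses to $\prod_{p \mid N} [p \mid c]$. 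This equals $1$ precisely when every prime divisor of $N$ divides $c$, equivalently $N \mid c$ since $N$ is square-free, and equals $0$ otherwise; this is exactly the asserted case distinction.

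There is no real obstacle here: the single point that deserves a moment of care is the precise zero locus of the Kronecker symbol, namely that $\left(\frac{d}{c}\right) = 0$ if and only if $\gcd(c,d) > 1$ for odd square-free $d$, which follows from the multiplicative definition together with $d$ being coprime to $2$. Alternatively one can simply defer to the references \cite{bemo2}*{Lemma 7.1} and \cite{mmrw}*{Lemma 7.1}, where this identity is already recorded, and keep the proof in the present paper to a single line.
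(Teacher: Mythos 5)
Your argument is correct and complete. The paper itself gives no proof of this lemma, deferring entirely to the cited references \cite{bemo2}*{Lemma 7.1} and \cite{mmrw}*{Lemma 7.1}, and your reduction --- observing that $\chi_d(c)^2$ is the indicator of $\gcd(c,d)=1$ and then factoring the M\"obius-weighted divisor sum over the primes dividing the square-free $N$ into local factors $1-[\gcd(p,c)=1]=[p\mid c]$ --- is exactly the standard inclusion--exclusion argument those references record. The only point worth stating explicitly, which you do, is the zero locus of the Kronecker symbol; note that this holds for any square-free $d$ (oddness is not needed), and the degenerate case $N=1$ is covered by the empty product.
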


Now, we are in position to evaluate the Kloosterman zeta functions in some cases.
\begin{lemma} \label{lem:Kloostermanzetaevals}
Let $N \in \N$ be square-free. Let $n \in \N$.
\begin{enumerate}[label={\rm  (\roman*)}]
\item Let $\sigma_s$ be the usual divisor sum (see equation \eqref{eq:divisordef}). We have
\begin{align*}
\Ks_{N}(0,n;s) = \frac{N^{1-s}}{\zeta(s)} \left(\prod_{\substack{p \text{ prime} \\ p\mid N}} \frac{1}{1-p^{-s}}\right) \sum_{\substack{d \mid N \\ \frac{N}{d} \mid n}} \mu(d) d^{-1} \sigma_{1-s}\left(\frac{nd}{N}\right).
\end{align*}
\item We have
\begin{align*} 
\widetilde{\Ks}_{N}(0,0;s) = \frac{\zeta(s-1)}{\zeta(s)} \prod_{\substack{p \text{ prime} \\ p\mid N}} \frac{1-p^{1-s}}{1-p^{-s}}.
\end{align*}
\item We have
\begin{align*}
\Ks_{N}(0,0;s) = \frac{\zeta(s-1)}{\zeta(s)}\sum_{\ell \mid N} \mu(\ell) \prod_{\substack{p \text{ prime} \\ p\mid \ell}} \frac{1-p^{1-s}}{1-p^{-s}}.
\end{align*}
\end{enumerate}
\end{lemma}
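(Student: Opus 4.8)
The plan is to reduce all three evaluations to the classical Dirichlet series of Ramanujan sums, sieving out the congruence conditions on the summation variable by means of Lemma~\ref{lem:levellemma}. The first step, common to all parts, is to identify the inner character sum: the sum over reduced residues $r$ modulo $c$ of $e^{2\pi i n r/c}$ is the Ramanujan sum $c_c(n) = \sum_{e \mid \gcd(c,n)}\mu(c/e)\,e$, which for $n = 0$ degenerates to Euler's totient $\varphi(c)$. Thus $\Ks_N(0,n;s) = \sum_{c > 0,\, N \mid c} c_c(n)\,c^{-s}$, $\Ks_N(0,0;s) = \sum_{c > 0,\, N \mid c}\varphi(c)\,c^{-s}$ and $\widetilde{\Ks}_N(0,0;s) = \sum_{d > 0,\, \gcd(d,N)=1}\varphi(d)\,d^{-s}$; these converge absolutely for $\re(s) > 2$, say, and the asserted identities then propagate to all $s$ by analytic continuation.

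Parts (ii) and (iii) are short. For (ii), the summand $\varphi(d)d^{-s}$ is multiplicative, so imposing $\gcd(d,N)=1$ simply deletes the Euler factors at the primes $p \mid N$ from the product $\sum_{d \geq 1}\varphi(d)d^{-s} = \zeta(s-1)/\zeta(s)$; computing the local factor $\sum_{a \geq 0}\varphi(p^a)p^{-as} = (1-p^{-s})/(1-p^{1-s})$ then yields the stated formula. For (iii), I would apply Lemma~\ref{lem:levellemma} in the form $\mathbf{1}_{N \mid c} = \sum_{d \mid N}\mu(d)\chi_d(c)^2$, use that $\chi_d(c)^2 = \mathbf{1}_{\gcd(c,d)=1}$ for square-free $d$, interchange the summations (justified by absolute convergence), and apply part (ii) with $N$ replaced by $d$ to each inner sum.

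Part (i) is the one requiring genuine bookkeeping. Again writing $\mathbf{1}_{N \mid c} = \sum_{d \mid N}\mu(d)\,\mathbf{1}_{\gcd(c,d)=1}$, I would expand the Ramanujan sum and split $c = ef$ with $e \mid n$ so that the $f$-sum decouples as $\sum_{\gcd(f,d)=1}\mu(f)f^{-s} = \zeta(s)^{-1}\prod_{p \mid d}(1-p^{-s})^{-1}$, leaving $\Ks_N(0,n;s) = \zeta(s)^{-1}\sum_{d \mid N}\mu(d)\bigl(\prod_{p \mid d}(1-p^{-s})^{-1}\bigr)\sum_{e \mid n,\, \gcd(e,d)=1}e^{1-s}$. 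To reach the closed form stated in the lemma I would then M\"obius-expand the coprimality condition in the $e$-sum, $\sum_{e \mid n,\, \gcd(e,d)=1}e^{1-s} = \sum_{g \mid \gcd(d,n)}\mu(g)\,g^{1-s}\,\sigma_{1-s}(n/g)$, interchange the $d$- and $g$-summations, and evaluate the inner sum $\sum_{g \mid d \mid N}\mu(d)\prod_{p \mid d}(1-p^{-s})^{-1}$ by telescoping its Euler product; it collapses to $\mu(g)\,\mu(N/g)\,(N/g)^{-s}\prod_{p \mid N}(1-p^{-s})^{-1}$, so that (after the two copies of $\mu(g)$ cancel and the powers of $g$ are combined) relabeling $g = N/d$ produces precisely the displayed expression. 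The only real obstacle is arithmetic bookkeeping: keeping the three nested divisor sums in order and checking that the telescoping leaves exactly the factor $N^{1-s}\prod_{p \mid N}(1-p^{-s})^{-1}$ with no stray local terms. It is worth sanity-checking the final formula for $N = p$ prime, comparing the cases $p \mid n$ and $p \nmid n$.
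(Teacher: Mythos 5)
Your proposal is correct and follows essentially the same route as the paper: parts (ii) and (iii) are proved there by exactly the Euler-product computation and the sieve of Lemma~\ref{lem:levellemma} that you describe. The only difference is in part (i), where the paper simply cites the first displayed equation of \cite{moreno}*{p.~136}, whereas you derive it from scratch via the Ramanujan-sum expansion, the decomposition $c=ef$, and the telescoping M\"obius sum; your bookkeeping there checks out (the inner sum does collapse to $\mu(g)\mu(N/g)(N/g)^{-s}\prod_{p\mid N}(1-p^{-s})^{-1}$, and the substitution $d=N/g$ gives the stated formula), so your argument is a self-contained replacement for that citation.
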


\begin{rmks}
\
\begin{enumerate}
\item We evaluate $\Ks_{\kappa,4N}^{+}(0,n;s)$ in Proposition \ref{prop:Kloostermanzetapeiwang} and Lemma \ref{lem:shimurasturmresult}.
\item Parts (ii) and (iii) appeared in \cite{bemo2}*{p.\ 32} in the case that $N=p$ is an odd prime.
\end{enumerate}
\end{rmks}

\begin{proof}
\
\begin{enumerate}[label={\rm  (\roman*)}]
\item This follows by \cite{moreno}*{p.\ $136$} (first displayed equation).
\item This follows by
\begin{align*}
\hspace*{\leftmargini} \sum_{\substack{d > 0 \\ \gcd(d,N)=1}} \frac{\varphi(d)}{d^{s}} = \prod_{\substack{p \text{ prime} \\ p\nmid N}} \sum_{j \geq 0} \frac{\varphi\left(p^j\right)}{p^{js}} = \prod_{\substack{p \text{ prime} \\ p\nmid N}} \Bigg(1+\sum_{j \geq 1} \frac{p^j-p^{j-1}}{p^{js}}\Bigg) = \prod_{\substack{p \text{ prime} \\ p\nmid N}} \frac{1-p^{-s}}{1-p^{1-s}}.
\end{align*}
\item This follows by Lemma \ref{lem:levellemma} and (ii). \qedhere
\end{enumerate}
\end{proof}

Now, we are in position to state the Fourier expansions.
\begin{lemma} \label{lem:Eisensteinfourier}
Let $k > 1$. Then we have
\begin{enumerate}[label={\rm  (\roman*)}]
\item The Fourier expansion of $\Fc_{2-2k,N}$ about $i\infty$ is given by
\begin{multline*}
\hspace*{\leftmargini} \Fc_{2-2k,N}(\tau) = v^{2k-1}
+ (-2i)^{2-2k} \pi  \sum_{n \geq 0} \Ks_{N}(0,n;2k) q^n \\
+ \frac{(-2i)^{2-2k}}{\Gamma(2k-1)} \sum_{n < 0} \Ks_{N}(0,n;2k) \Gamma\left(2k-1,4\pi\vt{n}v\right) q^n.
\end{multline*}

\item Let $W_N \coloneqq \left(\begin{smallmatrix} 0 & -1 \\ N & 1 \end{smallmatrix}\right)$ be the Fricke involution and extend the slash operator to $\mathrm{GL}_2(\R)^{+}$ as in \cite{bump}*{p.\ 41}. Then, the Fourier expansion of $\Fc_{2-2k,N}$ about $0$ is given by
\begin{multline*}
\hspace*{\leftmargini} \left(\Fc_{2-2k,N}\big\vert_{2-2k}W_N\right)(\tau) = \delta_{N=1} \frac{v^{2k-1}}{N^{k}} 
+ \frac{(-2i)^{2-2k} \pi}{N^k}  \sum_{n \geq 0} \widetilde{\Ks}_{N}(0,n;2k) q^n \\
+ \frac{(-2i)^{2-2k}}{N^k\Gamma(2k-1)} \sum_{n < 0} \widetilde{\Ks}_{N}(0,n;2k) \Gamma\left(2k-1,4\pi\vt{n}v\right) q^n.
\end{multline*}

\item The Fourier expansion of $\Fc_{\frac{3}{2}-k,4N}^{+}$ about $i\infty$ is given by
\begin{multline*}
\hspace*{\leftmargini} \Fc_{\frac{3}{2}-k,4N}^{+}(\tau) = \frac{2}{3}v^{k-\frac{1}{2}}
+ \frac{2}{3} \left(\frac{i}{2}\right)^{k-\frac{3}{2}} \pi \sum_{\substack{n \geq 0 \\ (-1)^{1-k}n \equiv 0,1 \pmod*{4}}} \Ks_{\frac{3}{2}-k,4N}^{+}\left(0,n;k+\frac{1}{2}\right) q^n \\
+ \frac{2\left(\frac{i}{2}\right)^{k-\frac{3}{2}} \pi}{3\Gamma\left(k-\frac{1}{2}\right)} \sum_{\substack{n < 0 \\ (-1)^{1-k}n \equiv 0,1 \pmod*{4}}} \Ks_{\frac{3}{2}-k,4N}^{+}\left(0,n;k+\frac{1}{2}\right) \Gamma\left(k-\frac{1}{2},4\pi\vt{n}v\right) q^n.
\end{multline*}

\item The Fourier expansion of $\Ec_{2k,N}$ about $i\infty$ is given by
\begin{align*}
\Ec_{2k,N}(\tau) = 1 + \frac{2\zeta(2k)}{\zeta(1-2k)}\sum_{n \geq 1} \Ks_{N}(0,n;2k) n^{2k-1} q^{n}.
\end{align*}
\end{enumerate}
\end{lemma}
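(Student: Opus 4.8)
All four expansions come from a single unfolding computation, which I would first set up in general. Fix a group $\Gamma\in\{\Gamma_0(N),\Gamma_0(4N)\}$, a weight $\kappa$, and a seed $\phi(\tau)=v^{s}$ (with $s=0$ allowed, i.e.\ $\phi\equiv1$) that is annihilated by $\Delta_{\kappa}$. In $\sum_{\gamma\in\Gamma_\infty\backslash\Gamma}\phi\vert_{\kappa}\gamma$ one splits off the identity coset, which contributes $\phi$ itself, and parametrises the remaining cosets by their bottom rows $(c,d)$ (up to sign) with $c>0$ a multiple of the level and $\gcd(c,d)=1$. Writing $d=r+mc$ with $r$ modulo $c$ and $m\in\Z$ and applying Poisson summation in $m$, the sum over $c$ together with the residue sum $\sum_{r}e^{2\pi inr/c}$---carrying the theta multiplier $\left(\frac{c}{d}\right)\varepsilon_d^{2\kappa}$ in the half-integral case---assembles into the Kloosterman zeta functions of \eqref{eq:KsNdef}--\eqref{eq:Ksdef}, while the $m$-integral becomes a Whittaker-type integral $\int_{\R}(w+iv)^{-\kappa}(w^{2}+v^{2})^{-s}e^{-2\pi inw}\,\dm w$.

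Because $\phi$ is $\Delta_{\kappa}$-harmonic, each Fourier coefficient of the resulting function automatically lies in the span of $q^{n}$ and $\Gamma(1-\kappa,-4\pi nv)q^{n}$ (and of $1$, $v^{1-\kappa}$ for $n=0$), so the shape \eqref{eq:hmffourier} is forced and only the coefficients remain to be read off. For integer weight I would do this by contour integration: for the seed $\phi=v^{1-\kappa}$ the integrand simplifies to $(w+iv)^{-1}(w-iv)^{\kappa-1}$, closing below for $n>0$ gives a pure $q^{n}$ term, closing above for $n<0$ gives---via a Leibniz expansion at the pole of order $1-\kappa$ and the overall factor $v^{1-\kappa}$---a polynomial in $4\pi\vt{n}v$ times $e^{-2\pi\vt{n}v}$, which I would repackage as $\Gamma(1-\kappa,4\pi\vt{n}v)q^{n}$ using $\Gamma(m,x)=(m-1)!\,e^{-x}\sum_{j<m}x^{j}/j!$, and the $n=0$ contribution from $c>0$ is a constant proportional to $\Ks_N(0,0;\cdot)$. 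Applying this with $\kappa=2-2k$, $s=2k-1$ gives (i), and with $\phi\equiv1$, $\kappa=2k$ gives (iv); the latter is holomorphic, so the $n<0$ coefficients vanish and the $n=0$ one drops out because $\int_{\R}(w+iv)^{-2k}\,\dm w=0$, while the prefactor $\tfrac{2\zeta(2k)}{\zeta(1-2k)}$ comes out of $\tfrac{(-2\pi i)^{2k}}{(2k-1)!}$ through the functional equation of $\zeta$. As a sanity check, applying $\xi_{2-2k}$ to (i) should reproduce $(2k-1)$ times (iv).

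For (ii) I would use that $W_N$ normalises $\Gamma_0(N)$ and interchanges the cusps $i\infty$ and $0$, so $\Fc_{2-2k,N}\vert_{2-2k}W_N$ is the expansion of $\Fc_{2-2k,N}$ at the cusp $0$; the bottom rows of $\gamma W_N$ run over pairs whose lower-left entry is a multiple of $N$ and whose complementary entry is coprime to $N$, which is exactly the summation range in \eqref{eq:KsNtildedef}, the seed term $v^{2k-1}$ survives if and only if $0\sim i\infty$ (that is, $N=1$), and the powers $N^{-k}$ record the $\det(W_N)$-normalisation of the slash on $\mathrm{GL}_2(\R)^{+}$ (cf.\ \cite{bump}). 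The genuine work is (iii): here $\kappa=\frac32-k$ and $s=k-\frac12$, so $(w-iv)^{\kappa-1}$ has a branch point and the integral must instead be evaluated through the integral representation of the incomplete $\Gamma$-function, and the theta multiplier turns the residue sums into the twisted Kloosterman sums appearing in \eqref{eq:Ksdef}. I expect the main obstacle to be pushing the plus-space projection $\mathrm{pr}^{+}$ through the Fourier expansion: this is what restricts the support to $n$ with $(-1)^{1-k}n\equiv0,1\pmod*{4}$, produces the factor $1+\left(\frac{4}{c}\right)$ inside $\Ks_{\frac32-k,4N}^{+}$, and rescales the leading term to $\tfrac23v^{k-\frac12}$. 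I would carry this out from the explicit action of $\mathrm{pr}^{+}$ on Fourier coefficients (Kohnen, cf.\ \cite{koh85}), following Wagner's level-one computation in \cite{wag} and the weight-$\tfrac32$ computation in \cite{bemo1}, and checking that the local factors at primes dividing $N$ come out as stated.
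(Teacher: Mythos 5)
Your strategy is sound and is, at bottom, the same unfolding/Poisson-summation computation that produces all four expansions; the paper simply outsources most of it to the literature rather than redoing it. Concretely, the paper quotes (i) verbatim from \cite{thebook}*{Theorem 6.15 (v)} and (iii) from \cite{bemo1}*{Proposition 2.5} (specializing the spectral parameter to $s=\frac{k}{2}+\frac14$, where convergence for $k>1$ lets one avoid analytic continuation and the Whittaker functions collapse to incomplete $\Gamma$-functions exactly as you describe), and for (ii) it carries out your Fricke-involution argument via the coset decomposition $c=dj+r$ and a generalized Lipschitz summation formula, which is your Poisson-summation step in disguise. The one place where your route genuinely differs is (iv): you propose to unfold $\Ec_{2k,N}$ directly and recover the prefactor $\frac{2\zeta(2k)}{\zeta(1-2k)}$ from $\frac{(-2\pi i)^{2k}}{(2k-1)!}$ via the functional equation, whereas the paper obtains (iv) by applying $\xi_{2-2k}$ to (i) and using $\xi_{2-2k}\Fc_{2-2k,N}=(2k-1)\Ec_{2k,N}$ together with the reality of $\Ks_N(0,n;2k)$ — precisely the relation you relegate to a sanity check. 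Both are correct; the paper's derivation of (iv) is shorter and automatically consistent with (i), while yours is self-contained and does not require knowing the shadow identity in advance. The only part where your sketch leaves real work undone is pushing $\mathrm{pr}^{+}$ through the expansion in (iii) (the factor $1+\left(\frac{4}{c}\right)$, the support condition, and the $\frac23$ in front of $v^{k-\frac12}$), but you correctly identify this as the crux and point to the right sources (Kohnen's formula, Wagner, and the level-$4N$ weight-$\frac32$ computation), which is exactly where the paper gets it from.
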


\begin{proof}
\
\begin{enumerate}[label={\rm  (\roman*)}]
\item This is \cite{thebook}*{Theorem 6.15} (v).
\item We proceed as in \cite{bemo2}*{Lemma 2.3 (ii)} and use \cite{vass}*{Proposition 6.2.5} first. The usual coset representatives for $\Gamma_{\infty} \backslash \Gamma_0(N)$ yield
\begin{align*}
\left(\Fc_{2-2k,N}\big\vert_{2-2k}W_N\right)(\tau) 
&= \frac{v^{2k-1}}{2N^{k}} \sum_{\substack{(c,d) \in \Z^2 \setminus \left\{(0,0)\right\} \\ \gcd(Nc,d) = 1} }  \frac{1}{\left(c+d\tau\right) \left(c+d\overline{\tau}\right)^{2k-1}}.
\end{align*}
The claim follows by decomposing $c = dj+r$ with $j \in \Z$, $0 \leq r < d$ and using a generalized Lipschitz summation formula (see \cite{maass64}*{pp.\ 207}, \cite{siegel56}*{p.\ 366}). 
\item This can be found in \cite{bemo1}*{Proposition 2.5} (use \cite{bemo1}*{(2.4)} too). Since $k > 1$, the Kloosterman zeta functions converge and we may evaluate at spectral point $s = \frac{k}{2}+\frac{1}{4}$ directly. The Whittaker functions simplify by \cite{nist}*{items 13.18.2, 13.18.5}.
\item We combine (i) with \cite{thebook}*{Theorem 5.10 (iv) and Theorem 6.15 (ii)}, namely
\begin{align} \label{eq:Eisensteinshadow}
\xi_{2-2k} \Fc_{2-2k,N}(\tau) = (2k-1)\Ec_{2k,N}(\tau).
\end{align}
Note that $\Ks_{N}(0,n;2k) \in \R$ by Lemma \ref{lem:Kloostermanzetaevals} (i).
\qedhere
\end{enumerate}
\end{proof}

\subsection{Work of Pei and Wang} \label{subsec:peiwangprelim}

Let $N \in \N$, $N > 1$ be odd and square-free, $\ell \mid N$, and $L(s,\chi)$ be the usual Dirichlet $L$-function. Let
\begin{align} \label{eq:LNdef}
L_{N}(s,\chi) &\coloneqq L(s,\chi)\prod_{\substack{p \text{ prime} \\ p\mid N}} \left(1-\chi(p)p^{-s}\right) = \prod_{\substack{p \text{ prime} \\ p\nmid N}}\frac{1}{1-\chi(p)p^{-s}} = \sum_{\substack{n \geq 1 \\ \gcd(n,N)=1}} \frac{\chi(n)}{n^s}
\end{align}
be the \emph{incomplete Dirichlet $L$-function}. In addition, we abbreviate $\chi_{d} \coloneqq \left(\frac{d}{\cdot}\right)$ and
\begin{align} \label{eq:divisordef}
\sigma_{\ell,N,s}(r) \coloneqq \sum_{\substack{d \mid r \\ \gcd(d,\ell)=1 \\ \gcd\left(\frac{r}{d},\frac{N}{\ell}\right)=1}} d^s, \qquad \sigma_{N,s}(r) \coloneqq \sigma_{N,N,s}(r), \qquad \sigma_s(r) \coloneqq \sigma_{1,s}(r).
\end{align} 

\begin{rmk}
Let $N$ be square-free. Combining \cite{ramanujan}*{p.\ 343} with Lemma \ref{lem:levellemma} gives
\begin{align*}
\Ks_{N}(0,n;s) = \sum_{d \mid N} \mu(d) \frac{\sigma_{d,1-s}(n)}{L_d(s, \id)}.
\end{align*}
\end{rmk}

Following Pei and Wang, we define the \emph{generalized Hurwitz class numbers}
\begin{align} \label{eq:HNNdef}
H_{k,N,N}(n) 
&\coloneqq \begin{cases}
L_N\left(1-2k,\mathrm{id}\right) & \text{if } n=0, \\
L_N(1-k,\chi_t) \sum\limits_{\substack{a \mid m \\ \gcd(a,N) = 1}} \mu(a) \chi_t(a) a^{k-1} \sigma_{N,2k-1}\left(\frac{m}{a}\right) & \begin{array}{@{}l} \text{if } (-1)^kn=tm^2, \\ t \text{ fundamental}, \end{array}  \\
0 & \text{else},
\end{cases}
\end{align}
and
\small
\begin{multline} \label{eq:HellNdef}
H_{k,\ell,N}(n) \\
\coloneqq \begin{cases}
0 & \text{if } n=0, \\
L_{\ell}\left(1-k,\chi_{t}\right) \prod\limits_{\substack{p \text{ prime} \\ p\mid\frac{N}{\ell}}} \frac{1-\chi_t(p)p^{-k}}{1-p^{-2k}} \sum\limits_{\substack{a \mid m \\ \gcd(a,N) = 1}} \mu(a) \chi_t(a) a^{k-1} \sigma_{\ell,N,2k-1}\left(\frac{m}{a}\right) & \begin{array}{@{}l} \text{if } (-1)^kn=tm^2, \\ t \text{ fundamental}, \end{array} \\
0 & \text{else}
\end{cases}
\end{multline}
\normalsize
for $\ell \neq N$. Note that $H(k,n) = H_{k,1,1}(n)$ are Cohen's numbers and that $H_{1,1,1}(n) = H(n)$ are the classical Hurwitz class numbers.

\begin{rmk}
Following \cites{bemo1, bemo2}, we included the summation condition that $\gcd(a,N)=1$ in Pei's and Wang's sums running over $a \mid m$ inside the definition of $H_{k,\ell,N}(n)$ for all $\ell \mid N$.
\end{rmk}

Let $\Hs_{k,\ell,N}$ be as in equation \eqref{eq:Hsdef} and let $E_{k+\frac{1}{2}}^{+}(4N)$ be the Eisenstein subspace of forms in $M_{k+\frac{1}{2}}(4N)$ satisfying the plus space condition. 
\begin{lemma}[\cite{peiwang}*{Theorem 1}] \label{lem:peiwangmainresult}
\
\begin{enumerate}[label={\rm  (\roman*)}]
\item If $k=1$ and $N >1$ then 
$
\left\{\Hs_{1,\ell,N} \colon 1 < \ell \mid N \right\}
$
is a basis of the space $E_{\frac{3}{2}}^{+}(4N)$.
\item If $k > 1$ then
$
\left\{\Hs_{k,\ell,N} \colon \ell \mid N\right\}
$
is a basis of the space $E_{k+\frac{1}{2}}^{+}(4N)$.
\end{enumerate}
\end{lemma}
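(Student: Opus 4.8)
The plan is to split the statement into three pieces: (a) each listed function lies in $E_{k+\frac{1}{2}}^{+}(4N)$; (b) $\dim E_{k+\frac{1}{2}}^{+}(4N)$ equals the cardinality of the index set; (c) the listed functions are linearly independent. Granting (a)--(c), equality of cardinality and dimension forces the independent family to be a basis. The numerology behind (b) is that, for $N$ odd and square-free, both the number of divisors $\ell \mid N$ and the number of cusps of $\Gamma_0(N)$ equal $2^{\omega(N)}$, where $\omega(N)$ denotes the number of prime factors of $N$.

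For (a), membership is essentially built into the construction: by definition the $\Hs_{k,\ell,N}$ are plus-space projections of weight $k+\frac{1}{2}$, level $4N$ Eisenstein series, so they automatically satisfy the plus space condition and lie in the Eisenstein subspace of $M_{k+\frac{1}{2}}(4N)$ whenever the underlying regularized series is holomorphic. The single delicate point occurs in weight $\frac{3}{2}$ (the case $k=1$): Hecke's convergence trick applied to the series attached to $\ell=1$ yields a genuinely non-holomorphic completion (the non-modularity of $\Hs_{1,1,N}$ recorded in the introduction), while for each $1<\ell\mid N$ the corresponding series is holomorphic. This is exactly why case (i) omits $\ell=1$.

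For (b), and to transfer (c) to integral weight, I would use Kohnen's plus-space theory: the Shimura correspondence restricts to an isomorphism $\mathrm{Sh}\colon E_{k+\frac{1}{2}}^{+}(4N)\xrightarrow{\sim} E_{2k}(N)$ compatible with the Eisenstein/cuspidal splitting. For $N$ square-free and $k>1$ the space $E_{2k}(N)$ carries one linearly independent Eisenstein series per cusp of $\Gamma_0(N)$, giving $\dim E_{k+\frac{1}{2}}^{+}(4N)=2^{\omega(N)}=\vt{\{\ell\mid N\}}$. In weight $2$ (the case $k=1$) the cusp Eisenstein series satisfy exactly one linear relation, since only their differences are holomorphic, so the dimension drops by one to $2^{\omega(N)}-1=\vt{\{1<\ell\mid N\}}$. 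As $\mathrm{Sh}$ is injective, (c) is equivalent to linear independence of the images $\mathrm{Sh}(\Hs_{k,\ell,N})$ inside $E_{2k}(N)$.

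The hard part is (c), and I expect it to be the main obstacle. I would read off the Shimura images from Pei and Wang's formulas, or equivalently examine the Fourier coefficients $H_{k,\ell,N}(n)$ from \eqref{eq:HNNdef}, \eqref{eq:HellNdef} at indices $n$ with $(-1)^k n=t$ a fundamental discriminant, so that $m=1$ and the divisor sums degenerate. The $\ell$-dependence then sits entirely in the local factors $L_\ell(1-k,\chi_t)$ and $\prod_{p\mid N/\ell}\frac{1-\chi_t(p)p^{-k}}{1-p^{-2k}}$, each of which factors over the primes $p\mid N$; because $N$ is square-free, the dichotomy ``$p\mid\ell$ versus $p\mid\frac{N}{\ell}$'' is an independent binary choice at each prime. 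Consequently the array $\big(H_{k,\ell,N}(t_S)\big)$, evaluated on a suitable family of fundamental discriminants $t_S$ indexed by subsets $S$ of the primes dividing $N$, is a Kronecker product of $2\times2$ local blocks, and invertibility of the whole array reduces to nonsingularity of each block. The essential step is thus an elementary but careful check that a $2\times2$ determinant assembled from $1-p^{-2k}$, $1-\chi_t(p)p^{-k}$ and the Euler factor of $L(1-k,\chi_t)$ at $p$ is nonzero for an appropriate local character value $\chi_t(p)\in\{0,\pm1\}$; multiplicativity then upgrades these local nonvanishings to global invertibility, yielding linear independence. Combined with (a) and (b), this establishes the basis property in both cases.
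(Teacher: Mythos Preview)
The paper does not prove this lemma at all: it is stated with the attribution \cite{peiwang}*{Theorem 1} and used as a black box, so there is no ``paper's own proof'' to compare against. Your proposal is therefore not a comparison target but an attempted reconstruction of a result quoted from the literature.

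As a reconstruction, your outline is reasonable in spirit and aligns with how Pei and Wang themselves proceed (indeed, the paper records in Subsection~\ref{subsec:peiwangprelim} that they compute the Shimura lifts $\mathscr{S}_D(\Hs_{k,\ell,N})$ explicitly, landing on scalar multiples of the functions $G_{2k,\ell}$). Two points deserve more care. First, for (b) you invoke an isomorphism $E_{k+\frac{1}{2}}^{+}(4N)\cong E_{2k}(N)$ via the Shimura correspondence; Kohnen's isomorphism is established for newforms and cusp forms, and extending it cleanly to the Eisenstein part requires justification (Pei and Wang address this directly rather than citing Kohnen). Second, for (c) your Kronecker-product argument on Fourier coefficients presupposes that one can actually choose fundamental discriminants $t_S$ realizing every prescribed pattern of local symbols $(\chi_{t_S}(p))_{p\mid N}$, and that the resulting $2\times 2$ local blocks are genuinely nonsingular; both are true but need to be stated and checked, since a single degenerate prime would collapse the tensor product. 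A cleaner route, closer to the cited source, is to use the explicit Shimura images: the $G_{2k,\ell}$ lie in the span of $\{\Ec_{2k}(d\tau):d\mid N\}$ with a triangular change of basis, so their independence (and hence that of the $\Hs_{k,\ell,N}$, once the scalar prefactors are seen to be nonzero for some $D$) is immediate.
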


We emphasize that the function $\Hs_{1,1,N}$ is not modular, but the function $\Hs_{k,1,N}$ is modular whenever $k > 1$. Lastly, Pei and Wang worked out the Shimura lift of their generalized Cohen--Eisenstein series in \cite{peiwang}*{pp. 120--122}. Explicitly, we have
\begin{align*}
\mathscr{S}_D \left(\Hs_{k,\ell,N}\right) &= L_{\ell}\left(1-k, \chi_D\right) \Bigg(\prod_{\substack{p \text{ prime} \\ p\mid\frac{N}{\ell}}} \frac{1-\chi_D(p)p^{-k}}{1-p^{-2k}}\Bigg) G_{2k,\ell},
\end{align*}
where $G_{2k,\ell}$ can be evaluated in terms of $\Ec_{2k} = \Ec_{2k,1}$ (see equation \eqref{eq:Ecdef}) by
\begin{align*}
G_{2k,\ell}(\tau) = \frac{\zeta(1-2k)}{2}\sum_{r \mid \ell} \Big(\prod_{\substack{p \text{ prime} \\ p\mid r}} 1-p^{k-1}\Big) \sum_{t \mid \frac{N}{r}} \mu(t)\Ec_{2k}(rt\tau).
\end{align*}
Combining with Lemmas \ref{lem:Kloostermanzetaevals} (i) and \ref{lem:Eisensteinfourier} (iv), we find that
\begin{align*}
\Ec_{2k,p}(\tau) = \frac{2}{\zeta(1-2k)} \frac{1}{p^{2k-1}-1} \left(\frac{p^{2k}-p^{2k-1}}{p^{2k}-1}G_{2k,1}(\tau) - G_{2k,p}(\tau)\right)
\end{align*}
for an odd prime $p$.

\subsection{Vector-valued framework}

Let $N \in \N$, and define
\begin{align} 
V &\coloneqq \left\{\left(\begin{smallmatrix} x_2 & x_1 \\ x_3 & -x_2 \end{smallmatrix}\right) \colon x_1,x_2,x_3 \in \Q\right\}, \nonumber \\
L &\coloneqq \left\{\left(\begin{smallmatrix} b & \frac{c}{N} \\ a & -b \end{smallmatrix}\right) \colon a,b,c \in \Z\right\}, \label{eq:latticedef} \qquad L' \coloneqq \left\{\left(\begin{smallmatrix} b & \frac{c}{N} \\ a & -b \end{smallmatrix}\right) \colon a,c \in \Z, b \in \frac{1}{2N}\Z\right\}.
\end{align}
We equip $L$ with the quadratic form $Q(a,b,c) = Nb^2+ac$. The associated bilinear form is 
\begin{align*}
\left(\left(\begin{smallmatrix} b_1 & \frac{c_1}{N} \\ a_1 & -b_1 \end{smallmatrix}\right), \left(\begin{smallmatrix} b_2 & \frac{c_2}{N} \\ a_2 & -b_2 \end{smallmatrix}\right)\right) = 2Nb_1b_2+a_1c_2+a_2c_1,
\end{align*}
which yields signature $(2,1)$. The Grassmannian of lines in $V \otimes \R$ on which $Q$ is positive definite is isomorphic to $\H$. Note that $L'\slash L\cong \Z\slash2N\Z$, and we equip it with the quadratic form $x \mapsto x^2 / (4N)$. 

We recall the \emph{metaplectic double cover}
\begin{align} \label{eq:Mp2Zdef}
\mathrm{Mp}_2(\Z) \coloneqq \left\{ (\gamma, \phi) \colon \gamma = \left(\begin{smallmatrix} a & b \\ c & d \end{smallmatrix}\right)\in \slz, \ \phi\colon \H \rightarrow \C \text{ holomorphic}, \ \phi^2(\tau) = c\tau+d  \right\},
\end{align}
of $\slz$, which is generated by the pairs
\begin{align*}
\widetilde{T} \coloneqq \left(\left( \begin{smallmatrix} 1 & 1 \\ 0 & 1 \end{smallmatrix} \right),1\right), \qquad \widetilde{S} \coloneqq \left(\left( \begin{smallmatrix} 0 & -1 \\ 1 & 0 \end{smallmatrix}\right) ,\sqrt{\tau}\right),
\end{align*}
as well as the \emph{Weil representation $\rho_L$ associated to $L$}, which is defined on the generators by
\begin{align} \label{eq:Weildef}
\rho_L\left(\widetilde{T}\right)(\ef_h) &\coloneqq e^{2\pi iQ(h)} \ef_h, \qquad
\rho_L\left(\widetilde{S}\right)(\ef_h) \coloneqq \frac{e^{-\frac{\pi i}{4}}}{\sqrt{\vt{L'\slash L}}} \sum_{h' \in L'\slash L} e^{-2\pi i(h',h)_Q)} \ef_{h'},
\end{align}
where $\ef_{h}$ for $h \in L'\slash L$ is the standard basis of the group ring $\C[L'\slash L]$. We let $L^{-} \coloneqq (L,-Q)$ and call $\rho_{L^{-}}$ the \emph{dual Weil representation}. 

The \emph{vector-valued slash operator}
\begin{align} \label{eq:PeterssonVVdef}
f\big\vert_{\kappa,L}(\gamma,\phi) (\tau) \coloneqq \phi(\tau)^{-2\kappa}\rho_{L}^{-1}(\gamma,\phi)f(\gamma\tau)
\end{align}
encodes modularity with respect to $\rho_{L}$. Adapting Definition \ref{defn:hmfdef} accordingly yields the space of vector-valued holomorphic modular forms $M_{\kappa, L}$ and the space of vector-valued harmonic Maass forms $H_{\kappa, L}^{!}$.

Let $f$ be a smooth vector-valued function satisfying $f\big\vert_{\kappa,L}(\gamma,\phi) = f$ for every $(\gamma,\phi) \in \mathrm{Mp}_2(\Z)$. We denote the component functions of $f$ by $f_{h}$. If $N=1$ or $N=p$ is prime, the ``trace map''
\begin{align} \label{eq:plusspaceprojection}
\sum_{h \pmod*{2N}} f_h(\tau)\ef_h \mapsto \sum_{h \pmod*{2N}} f_h(4N\tau)
\end{align}
projects $f$ to a scalar-valued form of weight $\kappa$ and level $4N$ in Kohnen's plus space. We refer the reader to \cite{eiza}*{Theorem 5.6}, \cite{brufuim}*{Example 2.2}, \cite{alfesthesis}*{Subsection 7.3}, \cite{bor98}*{Example 2.4} for more details on this.

\subsection{Work of Alfes and Schwagenscheidt}

We let $\kappa \coloneqq k-1 \in \N$ in this subsection (recall that $k > 1$ throughout). Note that Alfes and Schwagenscheidt worked in signature $(1,2)$, while we work in signature $(2,1)$. In other words, we need to switch to the dual Weil representation in their discussion. 

\subsubsection{Theta kernels and theta lifts}
Let $\psi_{\kappa}$ and $\varphi_{\kappa}$ be the Millson and the Shintani Schwartz function as in \cite{alneschw18}*{p.\ 866}, \cite{alneschw21}*{p.\ 2316}. They give rise to the \emph{Millson} and \emph{Shintani theta functions}
\begin{align} 
\Theta^{(\mathrm{M})}(\tau,z) \coloneqq \sum_{h \in L'\slash L} \sum_{X \in L + h} \psi_{\kappa}(X,\tau,z) \ef_h, \label{eq:ThetaMdef} \\
\Theta^{(\mathrm{S})}(\tau,z) \coloneqq \sum_{h \in L'\slash L} \sum_{X \in L + h} \varphi_{\kappa}(X,\tau,z) \ef_h. \label{eq:ThetaSdef}
\end{align}
By \cite{alneschw18}*{Propsotion 3.1}, \cite{alneschw21}*{Proposition 4.1}, we have
\begin{enumerate}
\item the function $\tau \mapsto \Theta^{(\mathrm{M})}(\tau,z)$ has weight $\frac{1}{2}-\kappa$ for $\rho_{L^{-}}$,
\item the function  $z \mapsto \overline{\Theta^{(\mathrm{M})}(\tau,z)}$ has weight $-2\kappa$ for $\Gamma_0(N)$,
\item the function $\tau \mapsto \overline{\Theta^{(\mathrm{S})}(\tau,z)}$ has weight $\kappa+\frac{3}{2}$ for $\rho_{L}$, and
\item the function $z \mapsto \Theta^{(\mathrm{S})}(\tau,z)$ has weight $2\kappa+2$ for $\Gamma_0(N)$.
\end{enumerate}
Let $\sigma_{\af} \in \slz$ be a scaling matrix of the cusp $\af \in \Q \cup \{i\infty\}$ (that is we have $\sigma_{\af} \infty = \af$) and
\begin{align*}
\mathbb{F} \coloneqq \left\{\tau \in \H \colon -\frac{1}{2} \leq u < \frac{1}{2}, \ \vt{\tau} \geq 1 \right\}
\end{align*}
be the standard fundamental domain for $\slz$. Following Bruinier, Funke and Imamo\={g}lu \cite{brufuim}*{Subsection 5.4} and Alfes and Schwagenscheidt \cite{alneschw21}*{Subsection 2.2}, we truncate $\mathbb{F}$ at height $T > 0$ by defining
\begin{align*}
\mathbb{F}_T \coloneqq \left\{\tau \in \mathbb{F} \colon v \leq T\right\}.
\end{align*}
One recovers $\mathbb{F}$ from $\mathbb{F}_T$ by letting $T \to \infty$. Let $\mathcal{C}(N) \coloneqq \left(\Q \cup \{i\infty\}\right) \slash \Gamma_0(N)$ be a set of inequivalent cusps for $\Gamma_0(N)$ and $\alpha_{\af}$ be the width of the cusp $\af$. This established, a truncated fundamental domain for $\Gamma_0(N)$ is given by
\begin{align*}
\mathbb{F}(N)_T \coloneqq \bigcup_{\af \in \mathcal{C}(N)} \sigma_{\af} \mathbb{F}_T^{\alpha_{\af}}, \qquad \mathbb{F}_T^{\alpha_{\af}} \coloneqq \bigcup_{j=0}^{\alpha_{\af}-1} \left(\begin{smallmatrix} 1 & j \\ 0 & 1 \end{smallmatrix}\right)\mathbb{F}_T.
\end{align*} 

Let $z=x+iy$ and $\mu(z) \coloneqq \frac{\dm x \dm y}{y^2}$ be the usual hyperbolic measure. Let $F \in H_{-2\kappa}^{!}(N)$ and $G \in H_{2\kappa+2}^{!}(N)$ be scalar-valued harmonic Maass forms. Let $\mathrm{CT}_{s=0}(f(s))$ denote the constant term in the Laurent expansion of $f$ about $0$. Following \cite{brufuim}*{Definition 5.6}, Alfes and Schwagenscheidt define a \emph{regularized Millson theta lift} (see \cite{alneschw21}*{Definition 5.3}) 
\begin{multline} \label{eq:IMdef}
I^{(\mathrm{M})}\left(F, \tau\right) \\
\coloneqq \mathrm{CT}_{s=0} \sum_{\af \in \mathcal{C}(N)} \lim_{T \to \infty} \int_{\mathbb{F}(N)_T^{\alpha_{\af}}} \left(F\vert_{-2\kappa}\sigma_{\af}\right)(z) \overline{\left(\overline{\Theta^{(\mathrm{M})}}\vert_{-2\kappa}\sigma_{\af}\right)(\tau,z)} y^{-2\kappa-s} \dm\mu(z)
\end{multline}
as well as a \emph{regularized Shintani theta lift} (see \cite{alneschw21}*{Definition 5.1})
\begin{multline} \label{eq:ISdef}
I^{(\mathrm{S})}\left(G, \tau\right) \\
\coloneqq \mathrm{CT}_{s=0} \sum_{\af \in \mathcal{C}(N)} \lim_{T \to \infty} \int_{\mathbb{F}(N)_T^{\alpha_{\af}}} \left(G\vert_{2\kappa+2}\sigma_{\af}\right)(z) \overline{\left(\Theta^{(\mathrm{S})}\vert_{2\kappa+2}\sigma_{\af}\right)(\tau,z)} y^{2\kappa+2-s} \dm\mu(z).
\end{multline}
The transformation behavior of $\Theta^{(\mathrm{M})}$ resp.\ $\Theta^{(\mathrm{S})}$ implies that 
\begin{enumerate}
\item the function $I^{(\mathrm{M})}\left(F, \tau\right)$ transforms like a modular form of weight $\frac{1}{2}-\kappa$ for $\rho_{L^{-}}$ and
\item the function $I^{(\mathrm{S})}\left(G, \tau\right)$ transforms like a modular form of weight $\kappa+\frac{3}{2}$ for $\rho_{L}$.
\end{enumerate}
Moreover, we have
\begin{align*}
\Delta_{\frac{1}{2}-\kappa} I^{(\mathrm{M})}\left(F, \tau\right) &= 0, \qquad
\Delta_{\kappa+\frac{3}{2}} I^{(\mathrm{S})}\left(G, \tau\right) = 0,
\end{align*}
where the latter assertion requires that $\kappa > 0$, see \cite{alneschw21}*{Proposition 5.6}. The behavior towards the cusps follows by the Fourier expansions stated in \cite{alneschw18}*{Theorem 5.1} and \cite{alneschw21}*{Theorem 6.1}. Note that \cite{alneschw18}*{Theorem 5.1} assumes that $\xi_{-2\kappa}F$ is a cusp form. We extend this to the case $F = \Fc_{2-2k,p}$ in Subsection \ref{subsec:Millsonfourier}.

\subsubsection{Quadratic traces} \label{subsubsec:quadratictraces}
Let $D \in \Z \setminus \{0\}$. We define two sets of integral binary quadratic forms $Q = [a,b,c]$ of discriminant $D$ by
\begin{align} \label{eq:Quadraticformssets}
\begin{split}
\Qc_{N,D} &\coloneqq \left\{ax^2+bxy+cy^2 \colon a,b,c \in \Z, \ N \mid a, \ b^2-4ac=D \right\}, \\
\Qc_{N,D,h} &\coloneqq \left\{ax^2+bxy+cy^2 \colon a,b,c \in \Z, \ N \mid a, \ b^2-4ac=D, \ b \equiv h \pmod*{2N} \right\}.
\end{split}
\end{align}
They give rise to the usual ($0 > D \neq - Nn^2$ and $0 < D \neq n^2$), regularized ($0 < D = n^2$) and complementary ($0 > D = - Nn^2$) quadratic traces following \cite{brufuim}*{Section 3}, \cite{alneschw18}*{Section 5} and \cite{alneschw21}*{Sections 3 and 6}. Here, we recall that Bruinier, Funke and Imamo\={g}lu \cite{brufuim}*{Subsection 3.4} introduced a regularization of the cycle integral along infinite geodesics in weight $0$. Subsequently, Alfes and Schwagenscheidt extended their regularization to weights $2\kappa+2$, which we utilize in this paper as well. 

Let $f_1 \colon \H \to \C$ be of weight $0$ and $f_2 \in H_{2\kappa+2}^{!}(N)$. We recall
\begin{align} \label{eq:SQtauQdef}
S_{[a,b,c]} &\coloneqq \left\{z \in \H \colon a\vt{z}^2+bx+c=0\right\}, \qquad \tau_{[a,b,c]} \coloneqq \frac{-b}{2a} + \frac{\sqrt{\vt{D}}}{2\vt{a}}.
\end{align}
We define \emph{imaginary-} and \emph{real quadratic traces} by
\begin{alignat}{2}
\tr\left(f_1;D,h\right) &\coloneqq \sum_{Q \in \Qc_{N,4ND, h} \slash \Gamma_0(N)} \frac{f_1\left(\tau_Q\right)}{\vt{\left(\Gamma_0(N)\right)_{\tau_Q}}}, \qquad &&\text{ if } D < 0, \label{eq:trimagdef} \\
\tr\left(f_2;D, h\right) &\coloneqq \sum_{Q \in \Qc_{N,4ND, h} \slash \Gamma_0(N)} \int_{\left(\Gamma_0(N)\right)_Q\backslash S_Q}^{\reg} f_2(z)Q(z,1)^k \dm z, \qquad &&\text{ if } D > 0, \label{eq:trrealdef}
\end{alignat}
where we orient and regularize the cycle integral as in \cite{alneschw21}. We have
$
\bigcup_{h \in L' \slash L} \Qc_{N,D,h} = \Qc_{N,D},
$
and thus define
\begin{align*}
\tr\left(f_j;D\right) \coloneqq \sum_{h \in L' \slash L} \tr\left(f_j;D,h\right), \qquad j \in \{1,2\}.
\end{align*}

We omit an explicit description of the complementary trace $\tr^{c}\left(f_3,-Nn^2,h\right)$ of a function $f_3 \in H_{-2\kappa}^{!}(N)$, since we do not require it in the body of the paper.

\section{Proof of Theorem \ref{thm:CEpreimage}} \label{sec:proofCE}

\subsection{Evaluation of Kloosterman zeta functions}
We may reduce the general case of \emph{half integral weight Kloosterman sums}
\begin{align} \label{eq:Kloostermansumdef}
K_{\kappa}(m,n;c) \coloneqq \sum_{\substack{r \pmod*{c} \\ \gcd(c,r)=1}} \left(\frac{c}{r}\right)\varepsilon_r^{2\kappa} e^{2\pi i \frac{mr^*+nr}{c}}, \quad \kappa \in \frac{1}{2}+\Z, \quad 4 \mid c,
\end{align}
to the case of weight $\frac{1}{2}$ and $\frac{3}{2}$ Kloosterman sums, since
\begin{align} \label{eq:Kloostermanconnection}
\begin{split}
K_{\kappa+2}(m,n;c) &= K_{\kappa}(m,n;c) \\
&= (-1)^{\kappa-\frac{1}{2}}iK_{2-\kappa}(-n,-m;c) 
= (-1)^{\kappa-\frac{1}{2}}iK_{2-\kappa}(-m,-n;c),
\end{split}
\end{align}
compare with \cite{thebook}*{p.\ 268} or \cite{dit11annals}*{eq.\ (3.5)}. Alternatively, equation \eqref{eq:Kloostermanconnection} follows from \cite{andu}*{(1-9)} as well after noting that
\begin{align*}
e^{-2\pi i \frac{\kappa}{4}} \left(1+\left(\frac{4}{c}\right)\right)K_{\kappa}(m,n;c) \in \R.
\end{align*}

Equation $\eqref{eq:Kloostermanconnection}$ is the key ingredient to extending our results from \cite{bemo1} to higher weights. To be more precise, the proof of Theorem \ref{thm:CEpreimage} is based on the following proposition.
\begin{prop} \label{prop:Kloostermanzetapeiwang}
Let $N > 1$ be odd and square-free. Let $k > 1$ and $\ell \mid N$.
\begin{enumerate}[label={\rm (\roman*)}]
\item If $n = 0$ then
\begin{multline*}
\hspace*{\leftmargini} \frac{L_{4N}\left(2k-1,\id\right)}{L_{4N}\left(2k,\id\right)} \left(\frac{1+(-1)^ki}{2^{2k+1}-4} + \frac{1+(-1)^ki}{2^{2k+1}}\right) \prod_{\substack{p \text{ prime} \\ p\mid N}} \frac{p-1}{p\left(p^{2k-1}-1\right)} \\
= \begin{cases}
\overline{\Ks_{\frac{1}{2},4N}^{+}\left(0,0;k+\frac{1}{2}\right)} & \text{if } k \text{ is odd}, \\[10pt]
\overline{\Ks_{\frac{3}{2},4N}^{+}\left(0,0;k+\frac{1}{2}\right)} & \text{if } k \text{ is even}.
\end{cases}
\end{multline*}
\item If $n \geq 1$ with $(-1)^kn \equiv 0, 1 \pmod*{4}$ then
\begin{multline*}
\hspace*{\leftmargini} \sum_{\ell \mid N} \left(\prod_{\substack{p \text{ prime} \\ p\mid \frac{N}{\ell}}} \frac{p-1}{p\left(p^{2k-1}-1\right)}\right) \frac{\Gamma\left(k+\frac{1}{2}\right)}{(-2\pi i)^{k+\frac{1}{2}}} \frac{H_{k,\ell,N}(n)}{L_{\ell}(1-2k,\id)} \\
= \begin{cases}
\overline{\Ks_{\frac{1}{2},4N}^{+}\left(0,-n;k+\frac{1}{2}\right)} n^{k-\frac{1}{2}} & \text{if } k \text{ is odd}, \\[10pt]
\overline{\Ks_{\frac{3}{2},4N}^{+}\left(0,-n;k+\frac{1}{2}\right)} n^{k-\frac{1}{2}} & \text{if } k \text{ is even}.
\end{cases}
\end{multline*}
\end{enumerate}
\end{prop}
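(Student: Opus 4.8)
The plan is to relate the plus-space Kloosterman zeta function $\Ks_{\frac{1}{2},4N}^{+}$ (or $\Ks_{\frac{3}{2},4N}^{+}$, depending on the parity of $k$) to the integral-weight Kloosterman zeta functions $\Ks_{N}(0,n;s)$ and $\widetilde{\Ks}_{N}(0,n;s)$ that were already evaluated in Lemma \ref{lem:Kloostermanzetaevals}, and then to identify the resulting arithmetic expression with Pei and Wang's numbers $H_{k,\ell,N}(n)$ in equations \eqref{eq:HNNdef}, \eqref{eq:HellNdef}. The crucial reduction step is equation \eqref{eq:Kloostermanconnection}: the underlying half-integral-weight Kloosterman sums $K_{\kappa}(m,n;c)$ with $\kappa \in \{\tfrac12,\tfrac32\}$ are, up to the explicit factor $(-1)^{\kappa-\frac12}i$ and a swap $(m,n)\mapsto(-n,-m)$, independent of whether $\kappa=\tfrac12$ or $\kappa=\tfrac32$, and this is precisely what lets one carry the weight $\tfrac32$ computation of \cite{bemo1} over to all higher weights. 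So the parity of $k$ only enters through which of $\Ks_{\frac12}^{+}$, $\Ks_{\frac32}^{+}$ satisfies the relevant plus-space support condition $(-1)^{k}n\equiv 0,1\pmod 4$, and through the global sign $(-1)^k i$.

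Concretely I would proceed as follows. First, starting from the definition \eqref{eq:Ksdef} of $\Ks_{\kappa,4N}^{+}(m,n;s)$, open up the character sum over $r\pmod{4Nc}$, use multiplicativity and the Chinese Remainder Theorem to split off the $2$-part from the $N$-part, and evaluate the local factor at $2$ by hand (this produces the parenthetical factor $\frac{1+(-1)^k i}{2^{2k+1}-4}+\frac{1+(-1)^k i}{2^{2k+1}}$ in part (i), and a corresponding Gauss-sum type factor in part (ii)). Second, identify the remaining sum over the $N$-part with one of the integral-weight zeta functions; here Lemma \ref{lem:levellemma} is the natural device for interchanging the divisor sum over $d\mid N$ with the congruence condition $N\mid c$, and the Remark after equation \eqref{eq:divisordef} (relating $\Ks_N(0,n;s)$ to $\sum_{d\mid N}\mu(d)\sigma_{d,1-s}(n)/L_d(s,\id)$) gives exactly the kind of divisor-sum-over-$L$-values expression that appears in \eqref{eq:HNNdef}–\eqref{eq:HellNdef}. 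Third, in part (ii) one must handle the factorization $(-1)^k n = t m^2$ with $t$ fundamental: the Kloosterman zeta function $\Ks_N(0,-n;2k)$ naturally produces a twisted divisor sum $\sum_{a\mid m}\mu(a)\chi_t(a)a^{k-1}\sigma_{N,2k-1}(m/a)$ together with an $L$-value $L_N(1-k,\chi_t)$, via the classical Ramanujan-type identity cited in the Remark, and one matches this term by term against Pei and Wang's definition. Specialize $s=k+\tfrac12$, insert the Gamma-factor $\Gamma(k+\tfrac12)/(-2\pi i)^{k+\tfrac12}$ coming from the Fourier expansion normalization in Lemma \ref{lem:Eisensteinfourier} (iii), and keep careful track of the complex conjugation (the $\overline{\Ks^{+}}$ on the right-hand side) and of the factor $n^{k-\frac12}$, which is the $|n|^{s-1}$ from the Whittaker asymptotics.

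The main obstacle I anticipate is bookkeeping rather than conceptual: correctly computing the local factor at the prime $2$ in the plus-space Kloosterman sum — the combination of the theta-multiplier $\varepsilon_r^{2\kappa}$, the Kronecker symbol $\left(\frac{4Nc}{r}\right)$, and the plus-space weight $1+\left(\frac{4}{c}\right)$ — and showing it collapses to exactly the stated rational-times-$(1+(-1)^k i)$ expression, uniformly in the parity of $k$ after applying \eqref{eq:Kloostermanconnection}. A secondary subtlety is that the sum over $\ell\mid N$ on the left of part (ii) must be reorganized: the natural output of evaluating a single $\Ks_{\frac{\kappa}{},4N}^{+}(0,-n;k+\frac12)$ via Lemma \ref{lem:levellemma} is itself a sum over divisors of $N$, and one has to check that, after dividing by $L_\ell(1-2k,\id)$ and inserting the Euler factors $\prod_{p\mid N/\ell}\frac{p-1}{p(p^{2k-1}-1)}$, the two $\ell$-sums agree; this is essentially the same combinatorial identity (an inclusion–exclusion over square-free $N$) that underlies the basis statement in Lemma \ref{lem:peiwangmainresult}, so I would model that part of the argument on the weight $\tfrac32$ computation in \cite{bemo1} and on \cite{bemo2}. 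Once these local computations are in place, part (i) is the $n=0$ degeneration, obtained either by the same manipulation with $\sigma_{1-s}(0)$ interpreted as $\zeta(s-1)/\zeta(s)$-type data, or directly from Lemma \ref{lem:Kloostermanzetaevals} (ii), (iii).
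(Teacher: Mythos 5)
Your central reduction -- equation \eqref{eq:Kloostermanconnection}, which transfers the weight $\tfrac12$ computation of \cite{bemo1} to weight $\tfrac32$ (and hence to all $k$, with only the parity of $k$ deciding which of the two weights is relevant and contributing the sign $(-1)^k i$) -- is exactly the ingredient the paper singles out as the key to the whole proposition. However, the strategy you build around it contains a step that would fail: you propose to identify the $N$-part of $\Ks_{\kappa,4N}^{+}$ with the integral-weight zeta functions $\Ks_{N}(0,n;s)$ and $\widetilde{\Ks}_{N}(0,n;s)$ of Lemma \ref{lem:Kloostermanzetaevals}, and for part (i) even to read the answer off ``directly from Lemma \ref{lem:Kloostermanzetaevals} (ii), (iii)''. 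Those zeta functions are built from Ramanujan sums $\sum_{r}e^{2\pi i nr/c}$ with no character, whereas after the Chinese Remainder Theorem the local factors of $\Ks_{\kappa,4N}^{+}$ at odd primes are quadratic Gauss/Sali\'e sums $\varepsilon_{p^j}^{\pm1}\sum_{r}\bigl(\tfrac{r}{p^j}\bigr)e^{2\pi i nr/p^j}$; it is precisely this quadratic character that produces the twist $\chi_t$, the decomposition $(-1)^k n=tm^2$ with $t$ fundamental, and the half-integral shifts in the exponents. No manipulation with Lemma \ref{lem:levellemma} turns the one family into the other, so that identification is not available. Relatedly, the $\ell$-sum in part (ii) does not arise from an inclusion--exclusion via Lemma \ref{lem:levellemma}: in the paper it comes from the elementary expansion $\prod_{p\mid N}A_k(p,-n)=\sum_{\ell\mid N}\prod_{p\mid\ell}\bigl(A_k(p,-n)-\tfrac{p-1}{p(p^{2k-1}-1)}\bigr)\prod_{p\mid N/\ell}\tfrac{p-1}{p(p^{2k-1}-1)}$, i.e.\ adding and subtracting the value $A_k(p,0)$ in each Euler factor.

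The route the paper actually takes is: (1) quote the factorization of $\Ks_{\kappa,4N}^{+}(0,n;s+\tfrac32)$ into a ratio of incomplete $L$-functions, the twisted divisor sum $T_{4N,-s}^{\chi_t}(m)$, a local factor at $2$, and local factors $\sum_{j}a_{\kappa}(p^j,n)p^{-j(s+3/2)}$ at $p\mid N$ (Lemma \ref{lem:shimurasturmresult}, taken from \cite{bemo1}); (2) evaluate those local factors at $s=k-1$ and identify them with Pei and Wang's numbers $\overline{A_k(p,-n)}$ (Lemma \ref{lem:localconnection}), which is where \eqref{eq:Kloostermanconnection} is used; (3) match the result not against the raw definitions \eqref{eq:HNNdef}, \eqref{eq:HellNdef} but against Pei and Wang's own intermediate identity for $H_{k,\ell,N}(n)/L_{\ell}(1-2k,\id)$ in terms of the $A_k(p,n)$ (Lemma \ref{lem:peiwangresult}); your plan to match ``term by term against Pei and Wang's definition'' would force you to reprove that identity, which hides the functional-equation and divisor-sum work in $L_{4N}(k,\chi_t)$ versus $L_{\ell}(1-k,\chi_t)$. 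Part (i) is then obtained by letting $\nu_2(n)\to\infty$ and $\nu_p(n)\to\infty$ in the local factors to recover $A_k(2,0)$ and $A_k(p,0)$, not from Lemma \ref{lem:Kloostermanzetaevals}. So the skeleton of your argument (multiplicative splitting, local factor at $2$, weight transfer via \eqref{eq:Kloostermanconnection}) is sound, but the specific identifications you propose for the odd part and for the $\ell$-sum need to be replaced by the Sali\'e-sum evaluation and the add-and-subtract expansion above.
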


\begin{rmks}
\
\begin{enumerate}
\item If $N=1$, we have
\begin{align*}
\hspace*{\leftmargini} \Ks_{\frac{3}{2}-k,4}^{+}\left(0,0;k+\frac{1}{2}\right) = \frac{L_{4}\left(2k-1,\id\right)}{L_{4}\left(2k,\id\right)} \overline{\left(\frac{1+(-1)^ki}{2^{2k+1}-4} + \frac{1+(-1)^ki}{2^{2k+1}}\right)} = \frac{1-i}{2^{2k}} \frac{\zeta(2k-1)}{\zeta(2k)},
\end{align*}
which can be found in \cite{ibusa}*{Proposition 2.3} as well.
\item The evaluation of $\Ks_{\frac{3}{2}-k,4}^{+}\left(0,n;k+\frac{1}{2}\right)$ for $n \in \N$ can be found in \cite{andu}*{Proposition 5.7}.
\end{enumerate}
\end{rmks}

\subsection{Proof of Proposition \ref{prop:Kloostermanzetapeiwang}}
We follow \cite{bemo1}*{Sections 3 and 5}. We let $j\in \N$ and define
\begin{align*}
a_{\frac{1}{2}}\left(p^j,n\right) \coloneqq \begin{cases}
\sum\limits_{r=1}^{2^j} \left(\frac{2^j}{r}\right)\varepsilon_{r} e^{2\pi i\frac{nr}{2^j}} & \text{if } p=2, \\[10pt]
\varepsilon_{p^j}^{-1} \sum\limits_{r=1}^{p^j} \left(\frac{r}{p^j}\right)e^{2\pi i\frac{nr}{p^j}} & \text{if } p > 2,
\end{cases}
\quad 
a_{\frac{3}{2}}\left(p^j,n\right) \coloneqq \begin{cases}
\sum\limits_{r=1}^{2^j} \left(\frac{2^j}{r}\right)\varepsilon_{r}^{-1} e^{2\pi i\frac{nr}{2^j}} & \text{if } p=2, \\[10pt]
\varepsilon_{p^j} \sum\limits_{r=1}^{p^j} \left(\frac{r}{p^j}\right)e^{2\pi i\frac{nr}{p^j}} & \text{if } p > 2.
\end{cases}
\end{align*}
Note that
\begin{align*}
a_{\frac{1}{2}}(4,n) = \begin{cases}
1+i & \text{if } n \equiv 0,1 \pmod*{4}, \\
-1-i & \text{if } n \equiv 2,3 \pmod*{4},
\end{cases}
\qquad
a_{\frac{3}{2}}(4,n) = \begin{cases}
1-i & \text{if } n \equiv 0,3 \pmod*{4}, \\
-1+i & \text{if } n \equiv 1,2 \pmod*{4}.
\end{cases}
\end{align*}

We also define 
\begin{align*}
T_{4N,s}^{\chi}(n) &\coloneqq \sum_{\substack{0 < d \mid n \\ \gcd(d,4N)=1}} \mu(d)\chi(d)d^{s-1}\sigma_{4N,2s-1}\left(\frac{n}{d}\right).
\end{align*}

\begin{rmk}
We modify Pei and Wang's definition by imposing the extra summation condition $\gcd(d,4N)=1$, see the remark in \cite{bemo1}*{Subsection 2.4}.
\end{rmk}

We cite the following result.
\begin{lemma}[\protect{\cite{bemo1}*{Proposition 3.4}}] \label{lem:shimurasturmresult}
Let $\re(s) > 0$, $\kappa \in \{ \frac{1}{2}, \frac{3}{2}\}$, and write $n = tm^2$.
\begin{enumerate}[label={\rm (\roman*)}]
\item We have
\begin{multline*}
\hspace*{\leftmargini} \Ks_{\kappa,4N}^{+}\left(0,0;s+\frac{3}{2}\right) \\
= \frac{L_{4N}\left(2s+1,\id\right)}{L_{4N}\left(2s+2,\id\right)} \left(\sum_{j\geq2} \frac{a_{\kappa}\left(2^j,0\right) }{2^{j\left(s+\frac{3}{2}\right)}} + \frac{a_{\kappa}\left(4,0\right)}{2^{2\left(s+\frac{3}{2}\right)}}\right) \prod_{\substack{p \text{ prime} \\ p\mid N}} \sum_{j\geq1} \frac{a_{\kappa}\left(p^j,0\right)}{p^{j\left(s+\frac{3}{2}\right)}}.
\end{multline*}
\item If $n \neq 0$, we have
\begin{multline*}
\hspace*{\leftmargini} \Ks_{\kappa,4N}^{+}\left(0,n;s+\frac{3}{2}\right) \\
= \frac{L_{4N} (s+1, \chi_t)}{L_{4N} (2s+2, \id)} T_{4N,-s}^{\chi_t}(m) \left(  \sum_{j \geq 2} \frac{a_{\kappa}(2^j,n)}{2^{j(s + \frac{3}{2})}} + \frac{a_{\kappa}(4,n)}{2^{2(s + \frac{3}{2})}} \right) \prod_{\substack{p \text{ prime} \\ p\mid N}} \sum_{j \geq 1} \frac{a_{\kappa}(p^j,n)}{p^{j(s + \frac{3}{2})}}.
\end{multline*}
\end{enumerate}
\end{lemma}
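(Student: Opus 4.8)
The plan is to prove both parts by exploiting the multiplicativity of the Gauss-type sums occurring in $\Ks_{\kappa,4N}^{+}$, turning the defining Dirichlet series \eqref{eq:Ksdef} into an Euler product and then evaluating the local factors. Writing $C = 4Nc$, the inner sum in \eqref{eq:Ksdef} with $m=0$ is the half integral weight Kloosterman sum $K_{\kappa}(0,n;C)$ of \eqref{eq:Kloostermansumdef}; since one of its arguments vanishes it is essentially a twisted quadratic Gauss sum, and via the Chinese Remainder Theorem together with quadratic reciprocity for the Kronecker symbol and the behaviour of the $\varepsilon$-factor it factors as $\prod_{p^{j}\,\|\,C} a_{\kappa}(p^{j},n)$ over the prime powers exactly dividing $C$. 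The local factors $a_{\kappa}(p^{j},n)$ are exactly the normalisation introduced above: the $\varepsilon_{p^{j}}^{\pm1}$ prefactors are inserted precisely so that this factorisation holds without further correction terms.

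Next I would record which prime powers occur. Since $C=4Nc$ with $N$ odd and square-free, the $2$-part of $C$ is $2^{j}$ with $j\geq2$, the $p$-part for each $p\mid N$ is $p^{j}$ with $j\geq1$, and the part of $C$ coprime to $2N$ is an arbitrary positive integer coprime to $2N$. Moreover the plus-space weight $1+\left(\tfrac{4}{c}\right)$ equals $2$ when $c$ is odd and $1$ otherwise, i.e.\ it equals $2$ exactly when the $2$-part of $C$ is $2^{2}$ and $1$ when it is $2^{j}$ with $j\geq3$. Feeding the factorisation of $K_{\kappa}(0,n;C)$ and of $C^{-s-3/2}$ into \eqref{eq:Ksdef} and separating variables, the series becomes the product of a $2$-local factor $\sum_{j\geq2} a_{\kappa}(2^{j},n)2^{-j(s+3/2)}+a_{\kappa}(4,n)2^{-2(s+3/2)}$ (the extra term reflecting the doubling at $j=2$), a factor $\prod_{p\mid N}\sum_{j\geq1}a_{\kappa}(p^{j},n)p^{-j(s+3/2)}$, and the generic Euler product $\prod_{q\nmid 4N}\sum_{j\geq0}a_{\kappa}(q^{j},n)q^{-j(s+3/2)}$. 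This already reproduces the shape of both claimed formulas, so it remains only to identify the generic Euler product with $\tfrac{L_{4N}(2s+1,\id)}{L_{4N}(2s+2,\id)}$ in case (i) and with $\tfrac{L_{4N}(s+1,\chi_{t})}{L_{4N}(2s+2,\id)}\,T_{4N,-s}^{\chi_{t}}(m)$ in case (ii).

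For that I would compute, for an odd prime $q\nmid 2N$, the local sum $\sum_{j\geq0}a_{\kappa}(q^{j},n)q^{-j(s+3/2)}$ using the classical evaluation of the quadratic Gauss sums $\sum_{r\bmod q^{j}}\left(\tfrac{r}{q^{j}}\right)e^{2\pi i nr/q^{j}}$: they vanish unless $v_{q}(n)\geq j-1$, and in the surviving range, with $n=tm^{2}$ and $t$ fundamental, one gets either $\varphi(q^{j})$-type contributions (when the symbol restricts to the trivial character) or $\chi_{t}(q)q^{j-1/2}\varepsilon_{q^{j}}^{\mp1}$-type contributions, the $\varepsilon_{q^{j}}^{\pm1}$ in the definition of $a_{\kappa}$ making the outcome real in the expected way. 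Summing the resulting geometric-plus-finite series at each $q$ and multiplying over all $q\nmid 4N$ yields the ratio of incomplete $L$-functions together with the finite divisor sum $T_{4N,-s}^{\chi_{t}}$; in the degenerate case $n=0$ (so $\chi_{t}=\id$ and only even $j$ contribute) the local factor collapses to $\tfrac{1-q^{-(2s+2)}}{1-q^{-(2s+1)}}$ and the product is $\tfrac{L_{4N}(2s+1,\id)}{L_{4N}(2s+2,\id)}$. This is the computation carried out by Shimura and by Pei and Wang, adapted to the modified coprimality conventions ($\gcd(d,4N)=1$ in $T$, $\gcd(a,N)=1$ in the Hurwitz numbers) recorded in the remarks above; one also checks convergence for $\re(s)>0$ via the Weil bound $K_{\kappa}(0,n;C)\ll_{\epsilon}C^{1/2+\epsilon}$.

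The main obstacle is the local bookkeeping at ramified and at generic primes: verifying that the twisted CRT factorisation holds on the nose with the chosen normalisation of $a_{\kappa}(p^{j},n)$, carefully tracking the $\varepsilon_{p^{j}}^{\pm1}$ prefactors (these are what separate the $\kappa=\tfrac12$ and $\kappa=\tfrac32$ cases and, after re-indexing, feed into the $i$-powers appearing in Proposition \ref{prop:Kloostermanzetapeiwang}), and splitting the Gauss-sum evaluation into the cases $v_{q}(n)<j-1$, $v_{q}(n)=j-1$ and $v_{q}(n)\geq j$ with $v_{q}(n)$ even versus odd. None of these steps is conceptually deep, but assembling them into the clean closed forms above, and matching them against Pei and Wang's definitions \eqref{eq:HNNdef} and \eqref{eq:HellNdef} of $H_{k,\ell,N}(n)$, is where the work lies.
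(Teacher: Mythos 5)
This lemma is not proved in the present paper at all: it is imported verbatim as \cite{bemo1}*{Proposition 3.4} (``We cite the following result''), and the proof in that source is exactly the Shimura--Sturm/Pei--Wang computation you outline --- twisted multiplicative factorization of the Gauss sums $K_{\kappa}(0,n;4Nc)$ into the local factors $a_{\kappa}(p^j,n)$, separation of the $2$-part (where $1+\left(\tfrac{4}{c}\right)$ doubles the $j=2$ term), the ramified primes $p \mid N$, and the identification of the unramified Euler product with $L_{4N}(s+1,\chi_t)/L_{4N}(2s+2,\id)$ times $T^{\chi_t}_{4N,-s}(m)$ (collapsing to $L_{4N}(2s+1,\id)/L_{4N}(2s+2,\id)$ when $n=0$). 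Your proposal is correct and follows essentially the same route as the cited proof, including correctly flagging that the only real work is verifying the twisted CRT factorization on the nose with the $\varepsilon_{p^j}^{\pm 1}$-normalization of $a_{\kappa}$.
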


In the course of proving their first theorem, Pei and Wang \cite{peiwang} showed the following identity.
\begin{lemma}[\protect{\cite{peiwang}*{Theorem 1 (II)}}] \label{lem:peiwangresult}
Let $N > 1$ be odd and square-free. Let $\ell \mid N$, and $n \geq 1$ satisfy $(-1)^k n = tm^2 \equiv 0, 1 \pmod*{4}$. Then we have
\begin{multline*}
\frac{1}{L_{\ell}(1-2k,\id)} H_{k,\ell,N}(n) = \frac{(-2\pi i)^{k+\frac{1}{2}}}{\Gamma\left(k+\frac{1}{2}\right)} \frac{L_{4 N} (k, \chi_{t})}{L_{4N} (2k, \id)} T_{4N,1-k}^{\chi_{t}}(m) \left(A_k(2,n) + \frac{1+(-1)^k i}{2^{2k+1}} \right) \\
\times \Bigg(\prod_{\substack{p \text{ prime} \\ p\mid \ell}} \left(A_k(p,n) - \frac{p-1}{p\left(p^{2k-1}-1\right)}\right) \Bigg) n^{k-\frac{1}{2}}.
\end{multline*}
\end{lemma}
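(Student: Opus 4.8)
This identity is established by Pei and Wang in the course of proving \cite{peiwang}*{Theorem 1} (the basis statement recalled in Lemma \ref{lem:peiwangmainresult}); the plan is to recover it from the definitions \eqref{eq:HNNdef}--\eqref{eq:HellNdef} by reorganizing the arithmetic into local factors. Fix $n\geq 1$ with $(-1)^kn = tm^2$ and $t$ a fundamental discriminant. First I would note that the inner sum
\[
\sum_{\substack{a\mid m\\\gcd(a,N)=1}}\mu(a)\chi_t(a)a^{k-1}\,\sigma_{\ell,N,2k-1}\left(\frac{m}{a}\right)
\]
occurring in $H_{k,\ell,N}(n)$ is multiplicative in $m$, and compute its $p$-local factor, which depends only on $v_p(m)$ and on whether $p\nmid N$, $p\mid\ell$, or $p\mid\frac{N}{\ell}$. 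Using $\sigma_{2k-1}(p^f)=p^{(2k-1)f}\sigma_{1-2k}(p^f)$ to flip the exponent from $2k-1$ to $1-2k$, the local factors over all $p\nmid 2N$ reassemble, up to an overall factor $m^{2k-1}$, into the prime-to-$2N$ part of $T_{4N,1-k}^{\chi_t}(m)$; this accounts for the $m^{2k-1}$ in $n^{k-\frac{1}{2}}=\vt{t}^{k-\frac{1}{2}}m^{2k-1}$.

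The next step isolates the primes dividing $2N$. For $p\mid\ell$, the constraint $\gcd(d,\ell)=1$ inside $\sigma_{\ell,N,2k-1}$ truncates the divisor sum, so dividing out the generic local factor already extracted should leave exactly $A_k(p,n)-\frac{p-1}{p(p^{2k-1}-1)}$; for $p\mid\frac{N}{\ell}$, the constraint on $\sigma_{\ell,N,2k-1}$ together with the explicit product $\prod_{p\mid\frac{N}{\ell}}\frac{1-\chi_t(p)p^{-k}}{1-p^{-2k}}$ standing in front of $H_{k,\ell,N}(n)$ should cancel against the $p$-Euler factor of $T_{4N,1-k}^{\chi_t}(m)$. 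The prime $2$ divides $4N$ but not $N$, hence is constrained only on the side of $T_{4N,1-k}^{\chi_t}$; comparing the two $2$-adic contributions should produce $A_k(2,n)+\frac{1+(-1)^ki}{2^{2k+1}}$. Finally, I would apply the functional equations of $L(s,\chi_t)$ and of $\zeta(s)=L(s,\id)$ to convert $L_\ell(1-k,\chi_t)$ and the normalizing factor $L_\ell(1-2k,\id)$ into $L_{4N}(k,\chi_t)$ and $L_{4N}(2k,\id)$; the Gauss sum and the ratio of $\Gamma$-factors there supply the analytic prefactor $\frac{(-2\pi i)^{k+\frac{1}{2}}}{\Gamma(k+\frac{1}{2})}$ and the remaining $\vt{t}^{k-\frac{1}{2}}$, completing $n^{k-\frac{1}{2}}$, while the correction Euler factors $1-\chi_t(p)p^{\pm s}$ that appear when passing between complete and incomplete $L$-functions are absorbed into the local factors computed in the preceding step.

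The hard part is the local bookkeeping at the primes dividing $2N$: the divisor restrictions defining $\sigma_{\ell,N,s}$ differ from those defining $\sigma_{4N,s}$ inside $T_{4N,1-k}^{\chi_t}$, the prime $2$ must be handled on its own, and the exact normalization produced by the functional equations --- including the sign of the fundamental discriminant $t$ and the precise powers of $2\pi i$ and $\Gamma(k+\frac{1}{2})$ --- has to be tracked carefully. This is precisely the computation performed by Pei and Wang in \cite{peiwang}*{pp.\ 120--122}, so in practice I would verify the final identity against their formulas rather than redo the full Euler-product analysis from scratch.
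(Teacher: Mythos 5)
Your proposal is correct and takes essentially the same route as the paper: Lemma \ref{lem:peiwangresult} is lifted from Pei and Wang's proof of \cite{peiwang}*{Theorem 1 (II)}, and the paper's own proof is likewise just a pointer to their equations (10a), (10b) together with \cite{bemo1}*{Lemma 3.3}, which reconciles the slightly modified normalizations (the extra $\gcd(a,N)=1$ condition and the sign correction in $A_r(p,n)$) used here. Your Euler-product and functional-equation sketch is a fair outline of the computation Pei and Wang actually perform, and your decision to verify the final identity against their formulas rather than redo the local bookkeeping is exactly what the paper does.
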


\begin{proof}
Combine \cite{peiwang}*{(10a), (10b)} with \cite{bemo1}*{Lemma 3.3} and the penultimate sentence in Pei and Wang's aforementioned proof, see \cite{peiwang}*{p. 115}.
\end{proof}

Let $r \in \N$ and $p > 2$ be a prime. Pei and Wang \cite{peiwang}*{p.\ $106$} defined the numbers
\begin{multline*}
A_r(2,n) \coloneqq 2^{-(2r+1)}\left(1+(-1)^{r}i\right) \\
\times \begin{cases}
\frac{1-2^{(1-2r)\frac{\nu_2(n)-1}{2}}}{1-2^{1-2r}} - 2^{(1-2r)\frac{\nu_2(n)-1}{2}} & \text{if } 2 \nmid \nu_2(n), \\
\frac{1-2^{(1-2r)\frac{\nu_2(n)}{2}}}{1-2^{1-2r}} - 2^{(1-2r)\frac{\nu_2(n)}{2}} & \text{if } 2 \mid \nu_2(n), \frac{(-1)^{r}n}{2^{\nu_2(n)}} \equiv -1 \pmod*{4}, \\
\frac{1-2^{(1-2r)\frac{\nu_2(n)}{2}}}{1-2^{1-2r}} + 2^{(1-2r)\frac{\nu_2(n)}{2}}\left(1+2^{1-r}\left(\frac{\frac{(-1)^{r}n}{2^{\nu_2(n)}}}{2}\right)\right) & \text{if } 2 \mid \nu_2(n), \frac{(-1)^{r}n}{2^{\nu_2(n)}} \equiv 1 \pmod*{4},
\end{cases}
\end{multline*}
as well as
\begin{align*}
A_r(p,n) \coloneqq \begin{cases}
\frac{(p-1)\left(1-p^{(1-2r)\frac{\nu_p(n)-1}{2}}\right)}{p\left(p^{2r-1}-1\right)} - p^{(1-2r)\frac{\nu_p(n)+1}{2}-1} & \text{if } 2 \nmid \nu_p(n), \\
\frac{(p-1)\left(1-p^{(1-2r)\frac{\nu_p(n)}{2}}\right)}{p\left(p^{2r-1}-1\right)} + \left(\frac{\frac{(-1)^{r}n}{p^{\nu_p(n)}}}{p}\right) p^{(1-2r)\frac{\nu_p(n)+1}{2}-\frac{1}{2}} & \text{if } 2 \mid \nu_p(n).
\end{cases}
\end{align*}

\begin{rmk}
There is a small typo in the definition of $A_{r}(p,n)$ in \cite{peiwang}. Namely, if $2 \mid \nu_p(n)$, the sign of the second term in their definition of $A_r(p,n)$ should be flipped. We adjusted our definition of their numbers accordingly.
\end{rmk}

We begin by evaluating the local factors in Lemma \ref{lem:shimurasturmresult}. 
\begin{lemma} \label{lem:localconnection}
Let $k > 1$.
\begin{enumerate}[label={\rm (\roman*)}]
\item If $k$ is odd, then
\begin{align*}
\sum_{j\geq2} \frac{a_{\frac{1}{2}}\left(2^{j},n\right) }{2^{j\left(k+\frac{1}{2}\right)}} &= \overline{A_k(2,-n)}, \qquad \sum_{j\geq1} \frac{a_{\frac{1}{2}}\left(p^{j},n\right) }{p^{j\left(k+\frac{1}{2}\right)}} = \overline{A_k(p,-n)} = A_k(p,-n).
\end{align*}
\item If $k$ is even, then
\begin{align*}
\sum_{j\geq2} \frac{a_{\frac{3}{2}}\left(2^{j},n\right) }{2^{j\left(k+\frac{1}{2}\right)}} &= \overline{A_k(2,-n)}, \qquad \sum_{j\geq1} \frac{a_{\frac{3}{2}}\left(p^{j},n\right) }{p^{j\left(k+\frac{1}{2}\right)}} = \overline{A_k(p,-n)} = A_k(p,-n).
\end{align*}
\end{enumerate}
\end{lemma}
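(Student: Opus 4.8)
The plan is to compute both sides of each claimed identity as explicit rational/algebraic functions of $p^{-(2k-1)}$ (and of $2^{-(2k-1)}$ in the prime-$2$ case), organized according to the $p$-adic valuation $\nu_p(n)$, and check that they agree term by term. First I would reduce the two bullet items to a single computation: by the definition of $a_{\frac{1}{2}}$ and $a_{\frac{3}{2}}$ one has $a_{\frac{3}{2}}(p^j,n) = \overline{a_{\frac{1}{2}}(p^j,-n)}$ for odd $p$ (since $\varepsilon_{p^j}^{-1}=\overline{\varepsilon_{p^j}}$ and the Gauss-type sum conjugates by sending $n\mapsto -n$), and similarly at $p=2$ with the explicit values of $a_{\frac12}(4,n)$, $a_{\frac32}(4,n)$ recorded above; combined with $\varepsilon$-factor bookkeeping this lets me pass freely between the $k$ odd and $k$ even cases, so it suffices to establish, say, the $k$ odd statements and then conjugate. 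The assertion $\overline{A_k(p,-n)}=A_k(p,-n)$ for odd $p$ is immediate since $A_k(p,n)$ is manifestly real (it only involves powers of $p$ and a Kronecker symbol $\pm 1$).

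Next I would evaluate the inner Gauss-type sums $a_\kappa(p^j,n)$ in closed form. For odd primes $p$, the sum $\varepsilon_{p^j}^{-1}\sum_{r=1}^{p^j}\left(\frac{r}{p^j}\right)e^{2\pi i nr/p^j}$ is a classical (incomplete) Gauss sum: it vanishes unless $j \le \nu_p(n)+1$, it is a Ramanujan-type sum when $j \le \nu_p(n)$, and when $j = \nu_p(n)+1$ it reduces to a normalized quadratic Gauss sum evaluating to $\left(\frac{(-1)^{?}n/p^{\nu_p(n)}}{p}\right)$ times a power of $p^{1/2}$ — precisely the structure visible in Pei and Wang's definition of $A_r(p,n)$. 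Plugging these into $\sum_{j\ge 1} a_{\frac12}(p^j,n) p^{-j(k+\frac12)}$ produces a finite geometric series (from the $j\le\nu_p(n)$ range) plus a single boundary term (from $j=\nu_p(n)+1$), and after collecting I would match the geometric-series part against $\frac{(p-1)(1-p^{(1-2k)\lfloor\nu_p(n)/2\rfloor})}{p(p^{2k-1}-1)}$ and the boundary term against the remaining summand in $A_k(p,n)$, treating $\nu_p(n)$ even versus odd separately exactly as in the case split in the definition of $A_k$. The relation $k+\frac12 \leftrightarrow$ exponent $2k-1$ comes from pairing the $j$ and (suitably) reflected terms, mirroring the half-integral-weight Kloosterman reflection \eqref{eq:Kloostermanconnection}; I would double-check the sign $(-1)^k$ inside the Kronecker symbol is the one forced by writing everything in terms of $-n$.

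For the prime $p=2$ one runs the same argument but with the weight-$\frac12$ theta multiplier: $a_{\frac12}(2^j,n)=\sum_{r=1}^{2^j}\left(\frac{2^j}{r}\right)\varepsilon_r e^{2\pi i nr/2^j}$ is again an incomplete quadratic Gauss sum mod a power of $2$, supported on $j \le \nu_2(n)+?$ with an explicit boundary evaluation involving $\left(\frac{\cdot}{2}\right)$, and summing the geometric tail $\sum_{j\ge 2} a_{\frac12}(2^j,n) 2^{-j(k+\frac12)}$ reproduces the three-case formula for $A_k(2,n)$ (note the sum starts at $j=2$, consistent with the $a_\kappa(4,n)$ term being separated out in Lemma \ref{lem:shimurasturmresult}); the factor $2^{-(2k+1)}(1+(-1)^k i)$ out front of $A_r(2,n)$ is exactly the $j=2$ contribution $a_{\frac12}(4,n) 2^{-2(k+\frac12)}$ shifted, and one tracks the $n\equiv 0,1,2,3\pmod 4$ dichotomy through the values of $a_{\frac12}(4,n)$ listed above. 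I expect the main obstacle to be purely bookkeeping: keeping the complex $\varepsilon$-factors, the $(-1)^k$ inside the characters, and the ``$-n$ versus $n$'' reflection all consistent so that the conjugation $\overline{A_k(p,-n)}$ on the right matches the conjugate Gauss sums on the left, and handling the $2$-adic boundary case (odd versus even $\nu_2(n)$, and the further split by $(-1)^k n/2^{\nu_2(n)}\bmod 4$) without sign errors. Everything reduces to finite, explicit identities once the Gauss sums are evaluated, so beyond that care it is routine; I would cite \cite{bemo1}*{Sections 3 and 5} for the weight $\frac32$ template and adapt the signs.
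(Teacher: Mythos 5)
Your plan is correct and is essentially the route the paper takes: both rest on the weight reflection $a_{\frac{3}{2}}(p^j,n)=\overline{a_{\frac{1}{2}}(p^j,-n)}$ (the local incarnation of equation \eqref{eq:Kloostermanconnection}), followed by closed-form evaluation of the local Gauss-sum factors and a case-by-case match against Pei and Wang's $A_k(p,n)$ according to the parity of $\nu_p(n)$ and the residue of $n/p^{\nu_p(n)}$. The only difference is one of packaging: the paper does not re-derive the Gauss-sum evaluations but imports the explicit expressions from \cite{bemo1}*{(3.5), (3.6)} with $n\mapsto -n$ and $1+i\mapsto 1-i$ and then compares directly with $A_k(p,-n)$, which is exactly the computation you outline.
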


\begin{rmk}
Compare with \cite{bemo1}*{Lemma 5.2 (i)}, which evaluates the cases of weights $\frac{1}{2}$ and $\frac{3}{2}$.
\end{rmk}

\begin{proof}[Proof of Lemma \ref{lem:localconnection}]
We prove each case separately.
\begin{enumerate}[label={\rm (\roman*)}]
\item The case of weight $\frac{1}{2}$ is calculated in \cite{bemo1}, from which the case of odd $k$ follows by equation \eqref{eq:Kloostermanconnection}.

\item Let $k$ be even. By equation \eqref{eq:Kloostermanconnection}, we have
$
K_{\frac{3}{2}}(0,n;c) = (-1)^{\frac{3}{2}-\frac{1}{2}}iK_{\frac{1}{2}}(0,-n;c).
$
Hence we may use \cite{bemo1}*{(3.5), (3.6)} with $n \mapsto -n,$ and $1+i \mapsto  1-i$ to obtain
\begin{multline*}
\hspace*{\leftmargini} \sum_{j\geq2} \frac{a_{\frac{3}{2}}\left(2^{j},n\right) }{2^{j\left(k+\frac{1}{2}\right)}}
= \frac{1-i}{2^{k+1}} \\
\times \begin{cases} \frac{2^{-\nu_2(n)k}\left(2^{(\nu_2(n)+1)k}-2^{\frac{\nu_2(n)+1}{2}}\left(2^{2k}-1\right)\right)}{2^{2k}-2} & \text{if } 2 \nmid \nu_2(n), \\
\frac{2^{-(\nu_2(n)+1)k}\left(2^{(\nu_2(n)+2)k} - 2^{\frac{\nu_2(n)}{2}+2k+1} + 2^{\frac{\nu_2(n)}{2}+1}\right)}{2^{2k}-2} & \text{if } 2 \mid \nu_2(n), \frac{n}{2^{\nu_2(n)}} \equiv 1 \pmod*{4}, \\
\frac{2^{-(\nu_2(n)+2)k}\left(2^{\frac{\nu_2(n)}{2}+1}\left(2^k-2\right)\left(2^k+1\right)+2^{(\nu_2(n)+3)k}\right)}{2^{2k}-2} & \text{if } 2 \mid \nu_2(n), \frac{n}{2^{\nu_2(n)}} \equiv 7 \pmod*{8}, \\
\frac{2^{-(\nu_2(n)+2)k}\left(-2^{\frac{\nu_2(n)}{2}+1}\left(2^k+2^{2k}-2\right)+2^{(\nu_2(n)+3)k}\right)}{2^{2k}-2} & \text{if } 2 \mid \nu_2(n), \frac{n}{2^{\nu_2(n)}} \equiv 3 \pmod*{8},
\end{cases}
\end{multline*}
and
\begin{multline*}
\hspace*{\leftmargini} \sum_{j\geq1} \frac{a_{\frac{3}{2}}\left(p^{j},n\right) }{p^{j\left(k+\frac{1}{2}\right)}} = \\
 \begin{cases}
\left(1-p^{-2k}\right)\frac{p^{k(1-\nu_p(n))+\frac{\nu_p(n)+1}{2}}-p^{2k}}{p-p^{2k}} -1,
& \text{if } 2 \nmid \nu_p(n), \\
\frac{-p^{\frac{\nu_p(n)}{2}-\nu_p(n)k+2k}+p^{\frac{\nu_p(n)}{2}-\nu_p(n)k}\left(p^{2k} - p^{1-k} + p^k - p + 1\right) + p^{2k} - 1}{p^{2k} - p} -1,
& \text{if } 2 \mid \nu_p(n), \left(\frac{\frac{-n}{p^{\nu_p(n)}}}{p}\right) = 1, \\
\frac{-p^{\frac{\nu_p(n)}{2}-\nu_p(n)k + 2k} + p^{\frac{\nu_p(n)}{2}-\nu_p(n)k} \left(p^{2k} + p^{1-k} - p^k - p + 1\right)+ p^{2k} - 1}{p^{2k} - p} -1,
& \text{if } 2 \mid \nu_p(n), \left(\frac{\frac{-n}{p^{\nu_p(n)}}}{p}\right) = -1
\end{cases}
\end{multline*}
for $p > 2$. Note that $\nu_p(-n) = \nu_p(n)$. Since $k$ is even, we have $1+(-1)^k i = 1+i$ inside $A_k(2,n)$, while the evaluation of our local factor for $p=2$ yields $1-i$. Then, one can check by a direct computation that the expressions for each case do coincide if $p=2$. \qedhere
\end{enumerate}
\end{proof}

Now, we are in position to prove Proposition \ref{prop:Kloostermanzetapeiwang}
\begin{proof}[Proof of Proposition \ref{prop:Kloostermanzetapeiwang}]
We show the claim for even $k$, since the case of odd $k$ follows similarly.
\begin{enumerate}[label={\rm (\roman*)}]
\item Letting $\nu_2(n) \to \infty$ and $\nu_p(n) \to \infty$ in each arithmetic progression in Lemma \ref{lem:localconnection} yields
\begin{align*}
A_k(2,0) = \frac{1+(-1)^k i}{2^{2k+1}} \frac{1}{1-2^{1-2k}} = \frac{1+(-1)^k i}{2^{2k+1}-4}, \qquad A_k(p,0) = \frac{p-1}{p\left(p^{2k-1}-1\right)}.
\end{align*}
Lemma \ref{lem:shimurasturmresult} (i) implies the claim.
\item We let $s \to k-1$ in Lemma \ref{lem:shimurasturmresult} (ii), and use Lemma \ref{lem:localconnection} for even $k$ getting
\begin{multline*}
\hspace*{\leftmargini} \Ks_{\frac{3}{2},4N}^{+}\left(0,n;k+\frac{1}{2}\right) \\
= \frac{L_{4N} (k, \chi_t)}{L_{4N} (2k, \id)} T_{4N,1-k}^{\chi_t}(m) \left(\overline{A_k(2,-n)} + \frac{\overline{1-(-1)^k i}}{2^{2(k+\frac{1}{2})}} \right) \prod_{\substack{p \text{ prime} \\ p\mid N}} A_k(p,-n).
\end{multline*}
Following the proof of \cite{bemo1}*{Lemma 5.2 (ii)}, we write
\begin{align*}
\prod_{\substack{p \text{ prime} \\ p\mid N}} A_k(p,-n) &= \prod_{\substack{p \text{ prime} \\ p\mid N}} \left(A_k(p,-n) - \frac{p-1}{p\left(p^{2k-1}-1\right)} + \frac{p-1}{p\left(p^{2k-1}-1\right)} \right) \\
&= \sum_{\ell \mid N} \prod_{\substack{p \text{ prime} \\ p\mid \ell}} \left(A_k(p,-n) - \frac{p-1}{p\left(p^{2k-1}-1\right)}\right) \prod_{\substack{p \text{ prime} \\ p\mid \frac{N}{\ell}}} \frac{p-1}{p\left(p^{2k-1}-1\right)},
\end{align*}
and the result follows by Lemma \ref{lem:peiwangresult}. \qedhere
\end{enumerate}
\end{proof}

\subsection{Proof of Theorem \ref{thm:CEpreimage}}
Proposition \ref{prop:Kloostermanzetapeiwang} enables us to prove Theorem \ref{thm:CEpreimage}.
\begin{proof}[Proof of Theorem \ref{thm:CEpreimage}]
We first show the claim for even $k$. By equation \eqref{eq:Kloostermanconnection}, we have
\begin{align*}
\Ks_{\frac{3}{2}}(0,n;s) &= \Ks_{\frac{3}{2}-2m}(0,n;s), \qquad m \in 2\N,
\end{align*}
because the Kloosterman sum captures the weight entirely. Since $k > 1$ and $s = k-1 > 0$, the (twisted) Dirichlet series in Lemma \ref{lem:shimurasturmresult} have no poles. Combining Lemma \ref{lem:Eisensteinfourier} (iii) and Proposition \ref{prop:Kloostermanzetapeiwang} gives
\begin{multline*}
\xi_{\frac{3}{2}-k} \Fc_{\frac{3}{2}-k,4N}^{+}(\tau) \\
= \frac{2}{3}\left(k-\frac{1}{2}\right) \Bigg(1 + \sum_{\ell \mid N} \frac{1}{L_{\ell}(1-2k,\id)} \prod_{\substack{p \text{ prime} \\ p\mid \frac{N}{\ell}}} \frac{p-1}{p\left(p^{2k-1}-1\right)} \sum_{\substack{n > 0 \\ (-1)^kn \equiv 0,1 \pmod*{4}}} H_{k,\ell,N}(n) q^n \Bigg).
\end{multline*}
By equations \eqref{eq:HNNdef} and \eqref{eq:HellNdef}, we obtain
\begin{multline*}
\xi_{\frac{3}{2}-k} \Fc_{\frac{3}{2}-k,4N}^{+}(\tau) \\
= \frac{2}{3}\left(k-\frac{1}{2}\right) \sum_{\ell \mid N} \frac{1}{L_{\ell}(1-2k,\id)} \prod_{\substack{p \text{ prime} \\ p\mid\frac{N}{\ell}}} \frac{p-1}{p\left(p^{2k-1}-1\right)} \sum_{\substack{n \geq 0 \\ (-1)^kn \equiv 0,1 \pmod*{4}}} H_{k,\ell,N}(n) q^n,
\end{multline*}
and the claim follows by equations \eqref{eq:Hsdef} and \eqref{eq:LNdef}.

If $k$ is odd, we replace $\Ks_{\frac{3}{2},4N}^{+}\left(0,0;k+\frac{1}{2}\right)$ by $\Ks_{\frac{1}{2},4N}^{+}\left(0,0;k+\frac{1}{2}\right)$ and use the corresponding cases in Lemmas \ref{lem:shimurasturmresult} and \ref{lem:localconnection}. Note that Lemma \ref{lem:peiwangresult} does not depend on the parity of $k$, and thus the claim for odd $k$ follows analogously.
\end{proof}

\section{Fourier expansions of the regularized theta lifts of Eisenstein series} \label{sec:liftfourier}
The growth of the theta lifts towards the cusps can be deduced from their Fourier expansions. We work out the Fourier expansions for the inputs $\Ec_{2k,p}$ and $\Fc_{2-2k,p}$, respectively.

\subsection{Determining some parameters}
 We recall that the inequivalent cusps of $\Gamma_0(p)$ are represented by $0$ and $i\infty$, see \cite{cohstr}*{Example 6.3.25}.
\begin{lemma}[\cite{alfehl}*{Subsection 6.3}] \label{lem:cuspparameters}
Let $N=p$ be a prime.
\begin{enumerate}[label={\rm (\roman*)}]
\item The cusp $i\infty$ has width $\alpha_{\infty} = 1$ and parameter $\beta_{\infty} = \frac{1}{p}$.
\item The cusp $0$ has width $\alpha_{0} = p$ and parameter $\beta_{0} = 1$.
\end{enumerate}
\end{lemma}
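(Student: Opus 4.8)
The plan is to recall the intrinsic definitions of the cusp parameters attached to the lattice $L$ from \eqref{eq:latticedef} and to read off $\alpha_{\af}$ and $\beta_{\af}$ by an elementary matrix computation for the two scaling matrices $\sigma_{\infty} = \left(\begin{smallmatrix} 1 & 0 \\ 0 & 1 \end{smallmatrix}\right)$ and $\sigma_{0} = \left(\begin{smallmatrix} 0 & -1 \\ 1 & 0 \end{smallmatrix}\right)$, which represent the two inequivalent cusps of $\Gamma_0(p)$. Here $\alpha_{\af} \in \N$ is the width, i.e.\ the positive integer determined by $\sigma_{\af}^{-1} (\Gamma_0(p))_{\af} \sigma_{\af} = \{\pm\left(\begin{smallmatrix} 1 & \alpha_{\af} n \\ 0 & 1 \end{smallmatrix}\right) \colon n \in \Z\}$, where $(\Gamma_0(p))_{\af}$ is the stabiliser of $\af$; and $\beta_{\af} \in \Q_{>0}$ is determined by $L \cap \Q\ell_{\af} = \beta_{\af}\Z\ell_{\af}$, where $\ell_{\af} \coloneqq \sigma_{\af}\left(\begin{smallmatrix} 0 & 1 \\ 0 & 0 \end{smallmatrix}\right)\sigma_{\af}^{-1}$ is the isotropic vector of $V$ whose line is fixed by $\af$ (equivalently, $\beta_{\af}\ell_{\af}$ is the primitive vector of $L$ lying on that line). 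These are precisely the quantities used in \cite{alfehl}*{Subsection 6.3}, modulo the passage to the dual lattice $L^{-}$ caused by the signature switch between their $(1,2)$ and our $(2,1)$ conventions.

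For $\af = i\infty$ we take $\sigma_{\infty} = \left(\begin{smallmatrix} 1 & 0 \\ 0 & 1 \end{smallmatrix}\right)$. Then $(\Gamma_0(p))_{\infty} = \Gamma_{\infty}$ is generated modulo $\pm\left(\begin{smallmatrix} 1 & 0 \\ 0 & 1 \end{smallmatrix}\right)$ by $\left(\begin{smallmatrix} 1 & 1 \\ 0 & 1 \end{smallmatrix}\right)$, so $\alpha_{\infty} = 1$. Moreover $\ell_{\infty} = \left(\begin{smallmatrix} 0 & 1 \\ 0 & 0 \end{smallmatrix}\right)$, and the elements of $L$ proportional to it are exactly $\left(\begin{smallmatrix} 0 & c/p \\ 0 & 0 \end{smallmatrix}\right) = \tfrac{c}{p}\ell_{\infty}$ with $c \in \Z$ (these are the lattice vectors with coordinates $(a,b,c) = (0,0,c)$ in the parametrisation of \eqref{eq:latticedef}), so $L \cap \Q\ell_{\infty} = \tfrac{1}{p}\Z\ell_{\infty}$ and hence $\beta_{\infty} = \tfrac{1}{p}$.

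For $\af = 0$ we take $\sigma_{0} = \left(\begin{smallmatrix} 0 & -1 \\ 1 & 0 \end{smallmatrix}\right)$, which satisfies $\sigma_{0}i\infty = 0$. The stabiliser $(\Gamma_0(p))_{0}$ is generated modulo $\pm\left(\begin{smallmatrix} 1 & 0 \\ 0 & 1 \end{smallmatrix}\right)$ by $\left(\begin{smallmatrix} 1 & 0 \\ p & 1 \end{smallmatrix}\right)$, and a short computation gives $\sigma_{0}^{-1}\left(\begin{smallmatrix} 1 & 0 \\ p & 1 \end{smallmatrix}\right)\sigma_{0} = \left(\begin{smallmatrix} 1 & -p \\ 0 & 1 \end{smallmatrix}\right)$, so $\alpha_{0} = p$. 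Likewise $\ell_{0} = \sigma_{0}\left(\begin{smallmatrix} 0 & 1 \\ 0 & 0 \end{smallmatrix}\right)\sigma_{0}^{-1} = \left(\begin{smallmatrix} 0 & 0 \\ -1 & 0 \end{smallmatrix}\right)$, and the elements of $L$ proportional to it are $\left(\begin{smallmatrix} 0 & 0 \\ a & 0 \end{smallmatrix}\right) = -a\,\ell_{0}$ with $a \in \Z$, so $L \cap \Q\ell_{0} = \Z\ell_{0}$ and hence $\beta_{0} = 1$.

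The only point requiring genuine care is the bookkeeping: one has to fix the conventions for the scaling matrices, for the orientation of $\ell_{\af}$, and for the normalisation direction of $\beta_{\af}$ so that they agree with \cite{alfehl}, and in particular track the signature change ($(2,1)$ here versus $(1,2)$ in the work of Alfes--Ehlen and of Alfes--Schwagenscheidt) so that the parameters produced are exactly the ones entering the Fourier expansion formulas of \cite{alneschw18}*{Theorem 5.1} and \cite{alneschw21}*{Theorem 6.1}. Once these conventions are pinned down, both parts follow from the two short matrix computations above; the asymmetry $\beta_{\infty} = \tfrac1p$ versus $\beta_{0} = 1$ merely reflects the $\tfrac{c}{N}$-versus-$a$ asymmetry built into the definition of $L$.
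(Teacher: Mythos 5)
Your computation is correct and is essentially the only (and the standard) way to obtain these parameters; the paper itself gives no proof, simply citing \cite{alfehl}*{Subsection 6.3}, and your direct verification from the definitions of the width and of $\beta_{\af}$ as the normalisation of the primitive isotropic lattice vector on $\ell_{\af}$ reproduces exactly the computation underlying that reference. The matrix calculations (width $1$ at $i\infty$, width $p$ at $0$ via $\sigma_0^{-1}\left(\begin{smallmatrix} 1 & 0 \\ p & 1\end{smallmatrix}\right)\sigma_0 = \left(\begin{smallmatrix} 1 & -p \\ 0 & 1\end{smallmatrix}\right)$, and $L \cap \Q\ell_{\infty} = \tfrac{1}{p}\Z\ell_{\infty}$ versus $L \cap \Q\ell_{0} = \Z\ell_{0}$) all check out, and your remark that the signature switch does not affect these parameters is also correct since the underlying lattice is unchanged.
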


We deduce the following assertions.
\begin{cor} \label{cor:cuspparameters}
Let $N=p$ be prime. Then, the holomorphic parts of the $h$-th components $I_h^{(\mathrm{M})}$ and $I_h^{(\mathrm{S})}$ of the regularized Millson and regularized Shintani lift may contain a constant term (with respect to the summation index of the Fourier expansion) if and only if $h = 0$. If $h=0$, then
\begin{align*}
k_0 = k_{\infty} = 0, \qquad m_0 = m_{\infty} = 0, \qquad d_0 = d_{\infty} = 1,
\end{align*}
in the notation of \cite{alneschw18}*{Theorem 5.1}. In the notation of \cite{alneschw21}*{Theorem 6.1}, we have
\begin{align*}
\kappa_0 = \kappa_{\infty} = 0.
\end{align*}
\end{cor}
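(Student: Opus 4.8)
The plan is to unwind the definitions of the constants $k_\af$, $m_\af$, $d_\af$ from \cite{alneschw18}*{Theorem 5.1} and $\kappa_\af$ from \cite{alneschw21}*{Theorem 6.1} at each cusp $\af\in\{0,i\infty\}$, and to check directly that they take the claimed values using the data already recorded in Lemma \ref{lem:cuspparameters}, namely $\alpha_\infty=1$, $\beta_\infty=\frac1p$, $\alpha_0=p$, $\beta_0=1$. The point is that all of these auxiliary parameters are elementary combinatorial quantities attached to the cusp $\af$ (and to the component index $h$), built from the lattice $L$ in \eqref{eq:latticedef} and the cusp width and shift: for instance $k_\af$ and $m_\af$ record, in the expansion about $\af$, the position and denominator of the lattice vectors (equivalently, of the relevant cosets of $L'/L\cong\Z/2N\Z$) that can contribute a $\tau$-independent exponent to the holomorphic part of the theta integral, while $d_\af$ is the corresponding denominator normalizing the exponential.

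The first step is to fix the scaling matrices $\sigma_\infty=\id$ and $\sigma_0=W_p=\left(\begin{smallmatrix}0&-1\\p&1\end{smallmatrix}\right)$ (consistent with Lemma \ref{lem:Eisensteinfourier}(ii)), and to describe how $L'/L$ and the quadratic form $x\mapsto x^2/(4N)$ on it behave under conjugation by $\sigma_\af$; since $N=p$ is prime, $L'/L\cong\Z/2p\Z$ and the only coset with $Q(h)\in\Z$ (equivalently the only $h$ for which a constant Fourier term in $\tau$ is possible) is $h=0$. This gives the ``if and only if'' clause: for $h\neq0$ the exponent $Q(h)=h^2/(4p)$ forces a genuinely $q$-dependent contribution, so no constant term can appear in the holomorphic part of $I_h^{(\mathrm M)}$ or $I_h^{(\mathrm S)}$. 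The second step is then purely a specialization: plug $h=0$, $\alpha_\infty=1$, $\beta_\infty=\frac1p$ (resp. $\alpha_0=p$, $\beta_0=1$) into the formulas defining $k_\af,m_\af,d_\af,\kappa_\af$ in \cites{alneschw18,alneschw21} and read off $k_0=k_\infty=0$, $m_0=m_\infty=0$, $d_0=d_\infty=1$, $\kappa_0=\kappa_\infty=0$. Because $h=0$ corresponds to the zero coset, every ``shift'' in these formulas vanishes and every ``denominator'' collapses to $1$, which is exactly the content of the corollary.

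The main obstacle is bookkeeping rather than depth: one must match the indexing conventions of Alfes--Schwagenscheidt (who work in signature $(1,2)$) with the signature $(2,1)$ convention used here, i.e. keep track of the passage to the dual Weil representation $\rho_{L^-}$ as flagged in Subsection \ref{subsec:peiwangprelim}, and make sure that the cusp $0$ is treated with the correct width $\alpha_0=p$ (so that the local expansion variable is $e^{2\pi i\tau/p}$, not $e^{2\pi i\tau}$) when extracting $d_0$. A secondary subtlety is that \cite{alneschw18}*{Theorem 5.1} is stated under the hypothesis that $\xi_{-2\kappa}F$ is cuspidal, whereas here $F=\Fc_{2-2k,p}$ has an Eisenstein image; but the parameters $k_\af,m_\af,d_\af$ depend only on the lattice and the cusp, not on $F$, so this does not affect the computation and the extension to the Eisenstein case is deferred to Subsection \ref{subsec:Millsonfourier}. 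Once the conventions are pinned down, the verification is a short direct calculation.
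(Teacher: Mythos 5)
Your proposal is correct and follows essentially the same route as the paper: the paper likewise derives the restriction to $h=0$ from the requirement that a constant term be invariant under $\rho_L(\widetilde{T})$, i.e.\ that $Q(h)=h^2/(4p)\in\Z$, and then deduces $k_{\af}=0$ (hence the vanishing of the remaining shift parameters) directly from the definition of $k_{\af}$ via an element $h_{\af}\in\ell_{\af}\cap(L+h)$, which for $h=0$ may be taken to be $0$. The only caveat is that your scaling matrix for the cusp $0$ should be an element of $\slz$ (e.g.\ $\left(\begin{smallmatrix}0&-1\\1&0\end{smallmatrix}\right)$) rather than the Fricke matrix $W_p$ of determinant $p$; this is a normalization issue that does not affect the computation.
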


\begin{rmk}
We utilized some of these assertions in \cite{bemo2}*{p.\ 26} as well.
\end{rmk}

\begin{proof}
Let $h \in L' \slash L \cong \Z \slash 2p\Z$. We recall that the constant term has to be invariant under $\rho_{L}(T)$, that is,
\begin{align*}
\rho_{L}(T)\mathfrak{e}_h = \mathfrak{e}_h.
\end{align*}
However, we also have
\begin{align*}
\rho_{L}(T)\mathfrak{e}_h = e^{2\pi i Q(h)}\mathfrak{e}_h = e^{2\pi i \frac{h^2}{4p}}\mathfrak{e}_h,
\end{align*}
which enforces $h = 0$ since $h \in \Z \slash 2p\Z$. Note that $h = 0 \in L' \slash L$ is equivalent to $h \in L$. According to\footnote{Explicitly stated in their preprint below Theorem 5.1 there.} \cite{alneschw18}*{p.\ 888}, we have $h \in L$ if and only if $k_{\af} = 0$. To see this, we recall that the cusps $i\infty$ resp.\ $0$ correspond to $\ell_{\infty} = \left(\begin{smallmatrix} 0 & 1 \\ 0 & 0\end{smallmatrix}\right)$ resp.\ $\ell_0 = \left(\begin{smallmatrix} 0 & 0 \\ -1 & 0\end{smallmatrix}\right)$. Let $\ell_{\af}$ be the isotropic line corresponding to the cusp $\af$. We recall that $k_{\af} \in \Q$ is defined by $\sigma_{\af}^{-1}h_{\af} = \left(\begin{smallmatrix} 0 & k_{\af} \\ 0 & 0\end{smallmatrix}\right)$ for some $h_{\af} \in \ell_{\af} \cap (L+h) \neq \emptyset$ and satisfies $0 \leq k_{\af} < \beta_{\af}$. Since $h = 0$, we obtain $h_{\af} \in \ell_{\af} \cap L$, so we may choose $h_{\af} = 0$. Hence, we must have $k_{\af} = 0$. Alternatively, we have $h = 0$ if and only if $\frac{k_{\af}}{\beta_{\af}} \in \Z$, which is stated on \cite{alneschw18}*{p.\ 888} as well. But $0 \leq \frac{k_{\af}}{\beta_{\af}} < 1$ by definition of $k_{\af}$, so $\frac{k_{\af}}{\beta_{\af}} \in \Z$ implies $\frac{k_{\af}}{\beta_{\af}} = 0$ and thus $k_{\af} = 0$ once more.
\end{proof}

\subsection{The regularized Shintani lift}
Following Alfes and Schwagenscheidt \cite{alneschw21}*{Theorem 6.1}, we state the Fourier expansion of the regularized Shintani lift of $\Ec_{2k,p}$.
\begin{prop} \label{prop:Shintanifourier}
Let $k \in 2\N$, $N=p$ be prime, and $I_h^{(\mathrm{S})}$ be the $h$-th component of the regularized Shintani theta lift. Then we have
\begin{align*}
I_h^{(\mathrm{S})}\left(\Ec_{2k,p}, \tau\right) = \delta_{h=0}(-1)^{\frac{k}{2}} \sqrt{p} \frac{\Gamma(k)\zeta(k)}{2^{k-1}\pi^k} - (-1)^{k-1} \sqrt{p} \sum_{n > 0} \tr\left(\Ec_{2k,p},n,h\right)q^n.
\end{align*}
\end{prop}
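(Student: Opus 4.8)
The plan is to specialize the general Fourier expansion of the regularized Shintani theta lift, due to Alfes and Schwagenscheidt \cite{alneschw21}*{Theorem 6.1}, to the input $G=\Ec_{2k,p}$ with $\kappa=k-1$, so that $2k=2\kappa+2$. As a preliminary step one records that $\Ec_{2k,p}\in M_{2k}(p)\subseteq H_{2k}^{!}(p)$ is a genuine holomorphic modular form: $\xi_{2k}\Ec_{2k,p}=0$, and at the two inequivalent cusps $i\infty$ and $0$ of $\Gamma_0(p)$ its principal part reduces to the constant Fourier coefficient — which equals $1$ at $i\infty$ by Lemma \ref{lem:Eisensteinfourier}(iv), and $0$ at $0$ because $\Ec_{2k,p}$ is the Eisenstein series attached to $i\infty$ (this also follows from Lemma \ref{lem:Eisensteinfourier}(ii) together with \eqref{eq:Eisensteinshadow}, since the Fricke-transformed expansion carries no $v^{2k-1}$-term for $N=p$). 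In \cite{alneschw21}*{Theorem 6.1} the non-holomorphic part of $I^{(\mathrm{S})}(G,\tau)$ and its $q^{-n}$-terms ($n>0$) are controlled by $\xi_{2k}G$ and by the principal parts of $G$ at the cusps; both are trivial here, so $I_h^{(\mathrm{S})}(\Ec_{2k,p},\tau)$ is holomorphic and reduces to a constant term plus a $q^n$-part indexed by $n>0$.

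For $n>0$ the $q^n$-coefficient of $I_h^{(\mathrm{S})}(\Ec_{2k,p},\tau)$ is, by \cite{alneschw21}*{Theorem 6.1}, the real quadratic trace $\tr(\Ec_{2k,p},n,h)$ from \eqref{eq:trrealdef} — regularized as in \cite{alneschw21} when $n$ is a perfect square and an ordinary convergent cycle integral otherwise — multiplied by the normalization constant occurring in their theorem. Because Alfes and Schwagenscheidt work in signature $(1,2)$ whereas our lattice $L$ has signature $(2,1)$, this identification passes through the dual Weil representation; carrying the resulting powers of $2$, $\pi$, $i$ and $\sqrt p$ through collapses the constant to $-(-1)^{k-1}\sqrt p$.

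It remains to pin down the constant term. By Corollary \ref{cor:cuspparameters} only the component $h=0$ can carry one, with $\kappa_0=\kappa_\infty=0$ in the notation of \cite{alneschw21}*{Theorem 6.1}. The corresponding contribution is obtained by integrating $\Ec_{2k,p}$ against the $Q(X)=0$ part of the Shintani theta kernel; unfolding $\Ec_{2k,p}$ as an Eisenstein series over $\Gamma_\infty\backslash\Gamma_0(p)$ turns this into a Mellin/Rankin--Selberg integral, and only the cusp $i\infty$ contributes, its constant Fourier coefficient being $1$. Evaluating the $\int_0^\infty(\cdots)y^{2\kappa+2}\,\tfrac{\dm y}{y^2}$ integral yields the archimedean factor $\pi^{-(\kappa+1)}\Gamma(\kappa+1)\zeta(\kappa+1)=\pi^{-k}\Gamma(k)\zeta(k)$ (the zeta value arising from the sum over the isotropic lattice vectors); collecting the remaining elementary constants — a power of $2$, the sign $(-1)^{k/2}$, and a factor $\sqrt p$ coming from the level structure — produces $\delta_{h=0}(-1)^{k/2}\sqrt p\,\frac{\Gamma(k)\zeta(k)}{2^{k-1}\pi^k}$, completing the expansion.

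The hard part is not conceptual but bookkeeping: reconciling the $(1,2)$- and $(2,1)$-signature conventions and tracking every constant through \cite{alneschw21}*{Theorem 6.1} so that the normalizations land exactly on $-(-1)^{k-1}\sqrt p$ and $(-1)^{k/2}\sqrt p\,\frac{\Gamma(k)\zeta(k)}{2^{k-1}\pi^k}$, together with verifying that for this non-cuspidal but holomorphic input no spurious non-holomorphic or negative-index terms survive.
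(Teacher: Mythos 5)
Your outline agrees with the paper's proof in all structural respects: both specialize \cite{alneschw21}*{Theorem 6.1} (shifting $k\mapsto k-1$ and passing to the dual Weil representation, which produces the sign in front of the trace), both observe that $\xi_{2k}\Ec_{2k,p}=0$ kills the imaginary quadratic and complementary traces, and both use Lemma \ref{lem:Eisensteinfourier} (ii), (iv) together with Corollary \ref{cor:cuspparameters} to conclude that the only surviving principal-part contribution is a constant supported on $h=0$. Up to that point your argument is the paper's argument.

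The gap is in the constant term, which is the only nontrivial computation in the proposition. You propose to re-derive it by unfolding $\Ec_{2k,p}$ against the $Q(X)=0$ part of the kernel and assert that the archimedean integral ``yields'' $\pi^{-k}\Gamma(k)\zeta(k)$, with the sign $(-1)^{k/2}$ absorbed into ``remaining elementary constants.'' This is not carried out, and as stated it obscures where the answer actually comes from: the sum over isotropic vectors diverges at the relevant point and is handled by the $y^{-s}$-regularization and $\mathrm{CT}_{s=0}$, so what \cite{alneschw21}*{Theorem 6.1} hands you (with $\kappa_0=\kappa_\infty=0$ and only the cusp $i\infty$ contributing) is the value
\begin{align*}
\frac{p^{k-\frac{1}{2}}}{2}\,\frac{1}{p^{k-1}}\left((-1)^k\zeta(1-k)+\zeta(1-k)\right)=\sqrt{p}\,\zeta(1-k),
\end{align*}
i.e.\ a zeta value at the \emph{negative} argument $1-k$, with the two summands coming from the two isotropic directions and adding because $k$ is even. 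The factor $\Gamma(k)\zeta(k)/(2^{k-1}\pi^k)$ and, crucially, the sign $(-1)^{k/2}$ then come from the functional equation $\zeta(1-k)=2(2\pi)^{-k}\cos\!\left(\tfrac{\pi k}{2}\right)\Gamma(k)\zeta(k)$; the sign is $\cos(\pi k/2)$, not a bookkeeping constant from the level or the signature convention. Without either quoting the Alfes--Schwagenscheidt constant-term formula or actually performing the regularized unfolding, your sketch asserts the target expression rather than deriving it, so this step needs to be filled in.
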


\begin{proof}
We use \cite{alneschw21}*{Theorem 6.1} and recall that they work with the dual Weil representation. This results in a change of the sign of the Fourier index inside the real quadratic trace as well as of orientation of the geodesics, for which reason we obtain an additional sign factor in front of the real quadratic trace. 

Since $\xi_{2k}\Ec_{2k,p} = 0$, the Fourier expansion of $I_h^{(\mathrm{S})}\left(\Ec_{2k,p}, \tau\right)$ contains neither imaginary quadratic nor complementary traces. By Lemma \ref{lem:Eisensteinfourier} (iv), the constant term of the holomorphic part of $\Ec_{2k,p}$ in its Fourier expansion about $i\infty$ equals $1$. Recall from \cite{bump}*{Lemma 2.1.1} that the (generalized) slash operator intertwines with the Maass lowering operator for any matrix of positive determinant. Hence, the slash operator and the $\xi_{\kappa}$-operator intertwine (see \cite{thebook}*{Theorem 5.10} too), and thus Lemma \ref{lem:Eisensteinfourier} (ii) implies that the holomorphic part of $\Ec_{2k,p}$ has no constant term in its Fourier expansion about $0$. Hence, the non-holomorphic part of the theta lift vanishes entirely and the principal part of the theta lift is constant. Using Lemma \ref{lem:cuspparameters} and Corollary \ref{cor:cuspparameters}, we obtain the constant term ($k \mapsto k-1$ in their result)
\begin{align*}
\frac{p^{k-\frac{1}{2}}}{2} \frac{1}{p^{k-1}}\left((-1)^k\zeta(1-k)+\zeta(1-k)\right) = \sqrt{p}\zeta(1-k),
\end{align*}
and the claim follows by the functional equation of the Riemann $\zeta$-function (see \cite{nist}*{\S 25.4}). 
\end{proof}

\subsection{The regularized Millson lift} \label{subsec:Millsonfourier}
We move to the Fourier expansion of the regularized Millson lift of $\Fc_{2-2k,p}$. To this end, we would like to use Alfes and Schwagenscheidt's earlier work \cite{alneschw18}*{Theorem 5.1}. However, their result does not apply directly here, because $\xi_{2-2k}\Fc_{2-2k,p}$ is not a cusp form (see equation \eqref{eq:Eisensteinshadow}). In other words, if the input function $f$ is a harmonic Maass form with a Fourier expansion of the shape as in equation \eqref{eq:hmffourier}, then we need to add the contributions arising from the Fourier coefficients $c_f^{-}(n)$ with $n \geq 0$ to the Fourier expansion of $I^{(\mathrm{M})}\left(f, \tau\right)$ stated there. In the case of $f = \Fc_{2-2k,p}$, we have $c_f^{-}(n) = 0$ for $n > 0$ by Lemma \ref{lem:Eisensteinfourier} (i), and hence it suffices to determine the contribution from $c_f^{-}(0)$. We do so in the following lemma.
\begin{lemma} \label{lem:liftconstantterms}
Let $k > 1$. Let $F \in H_{2-2k}^{!}(N)$ and $G \coloneqq \xi_{2-2k}F \in M_{2k}^{!}(N)$. Let $C_{(M)}^{\pm}(0)$ denote the constant terms of the holomorphic ($+$) resp.\ non-holomorphic part ($-$) of the regularized Millson lift of $F$, and let $C_{(S)}^{\pm}(0)$ denote the constant terms of the holomorphic ($+$) resp.\ non-holomorphic part ($-$) of the regularized Shintani lift of $G$ in its Fourier expansion about $i\infty$. Then we have
\begin{align*}
C_{(M)}^{\pm}(0) &= -\frac{1}{\sqrt{N}} \overline{C_{(S)}^{\mp}(0)}.
\end{align*}
\end{lemma}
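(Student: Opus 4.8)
The plan is to exploit the differential equation \eqref{eq:thetaliftconnection} relating the Millson and Shintani lifts, namely $\xi_{\frac{3}{2}-k} I^{(\mathrm{M})}(F,\tau) = -\frac{1}{2\sqrt{N}} I^{(\mathrm{S})}(\xi_{2-2k}F,\tau) = -\frac{1}{2\sqrt{N}} I^{(\mathrm{S})}(G,\tau)$, and compare constant terms of Fourier expansions on both sides. The key point is that the $\xi$-operator, when applied to the Fourier expansion \eqref{eq:hmffourier} of a weight $\frac{3}{2}-k$ harmonic Maass form, interchanges the roles of holomorphic and non-holomorphic parts: writing $I^{(\mathrm{M})}(F,\tau)$ in the form \eqref{eq:hmffourier} with weight $\kappa = \frac{3}{2}-k$, the non-holomorphic part contributes $C_{(M)}^{-}(0) v^{1-\kappa} = C_{(M)}^{-}(0) v^{k-\frac{1}{2}}$, and applying $\xi_{\frac{3}{2}-k}$ to this term produces a constant (index-zero) term on the Shintani side.

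First I would recall the explicit action of $\xi_\kappa$ on the building blocks of \eqref{eq:hmffourier}. Using $\xi_\kappa(v^{1-\kappa}) = 2iv^\kappa \overline{\partial_{\overline\tau}(v^{1-\kappa})}$ and $v = \frac{\tau-\overline\tau}{2i}$, one computes $\xi_\kappa(v^{1-\kappa}) = (\kappa-1)\,\overline{\phantom{x}}$ — more precisely $\xi_\kappa(c\, v^{1-\kappa}) = -(1-\kappa)\overline{c}$, a constant. Likewise $\xi_\kappa$ sends the holomorphic part $\sum c^{+}(n)q^n$ to $0$ and sends the $n\neq 0$ non-holomorphic terms $c^{-}(n)\Gamma(1-\kappa,-4\pi n v)q^n$ to multiples of $\overline{c^{-}(n)}\, q^{-n}$, i.e.\ to the non-constant part. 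So the constant term of $\xi_{\frac{3}{2}-k} I^{(\mathrm{M})}(F,\tau)$ about $i\infty$ equals $-(1-(\tfrac{3}{2}-k))\,\overline{C_{(M)}^{-}(0)} = -(k-\tfrac{1}{2})\overline{C_{(M)}^{-}(0)}$, while on the other hand the non-constant $q^n$ ($n>0$) terms of $I^{(\mathrm{M})}$ — governed by $C_{(M)}^{+}(n)$ and $C_{(M)}^{-}(n)$ — are what match against the non-constant Shintani coefficients. I would then repeat this analysis for the Shintani lift $I^{(\mathrm{S})}(G,\tau)$, which is a weight $\kappa+\frac{3}{2} = k+\frac{1}{2}$ harmonic object: its constant term about $i\infty$ is $C_{(S)}^{+}(0) + C_{(S)}^{-}(0) v^{1-(k+\frac12)} = C_{(S)}^{+}(0) + C_{(S)}^{-}(0)v^{\frac12-k}$, and comparing the $v$-power $\frac12 - k$ term on the right of \eqref{eq:thetaliftconnection} against the constant $-(k-\frac12)\overline{C_{(M)}^{-}(0)}$ on the left forces a relation. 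Carefully matching the purely constant (in $v$) piece against the holomorphic constant term $C_{(S)}^{+}(0)$ and the $v^{\frac12-k}$ piece against the $v^{k-\frac12}$-piece $C_{(M)}^{-}(0)$ yields, after tracking the factor $-\frac{1}{2\sqrt N}$ and the constant $(k-\frac12)$ produced by $\xi$, the claimed identity $C_{(M)}^{\pm}(0) = -\frac{1}{\sqrt N}\,\overline{C_{(S)}^{\mp}(0)}$.

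The main obstacle, and the step deserving the most care, is bookkeeping the constants and complex conjugations through the $\xi$-operator and through the ``dual Weil representation'' convention switch noted in the paragraph preceding the lemma (Alfes and Schwagenscheidt work in signature $(1,2)$, here we work in $(2,1)$). In particular one must be attentive that $\xi_\kappa$ is conjugate-linear, so the holomorphic constant term $C_{(M)}^{+}(0)$ of the Millson lift — which is annihilated by $\xi$ — is pinned down not by the left side of \eqref{eq:thetaliftconnection} directly but by matching with the non-holomorphic constant term $C_{(S)}^{-}(0)$ of the Shintani lift via a second application of the same differential-equation-plus-Fourier-comparison argument (or, symmetrically, by applying \eqref{eq:thetaliftconnection} to the Shintani side). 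One should also verify that the regularization (the $\mathrm{CT}_{s=0}$) does not introduce extra constant-term contributions beyond those from the $c^{-}(0)$ coefficient; since $c_F^{-}(n) = 0$ for $n>0$ when $F = \Fc_{2-2k,p}$ by Lemma \ref{lem:Eisensteinfourier} (i), and since the relevant constant terms of $G = \xi_{2-2k}F$ about $i\infty$ and $0$ are exactly those already identified in the proof of Proposition \ref{prop:Shintanifourier}, this check is routine. Once the constants are tracked correctly, the identity is immediate from \eqref{eq:thetaliftconnection}.
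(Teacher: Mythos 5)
Your treatment of the identity $C_{(M)}^{-}(0) = -\frac{1}{\sqrt{N}}\,\overline{C_{(S)}^{+}(0)}$ follows the same route as the paper: apply $\xi_{\frac{3}{2}-k}$ to the constant terms of $I^{(\mathrm{M})}(F,\tau)$, observe that only the $v^{k-\frac{1}{2}}$-term survives, and compare with the constant term of $-\frac{1}{2\sqrt{N}}I^{(\mathrm{S})}(G,\tau)$, which for weakly holomorphic $G$ and $k>1$ is a holomorphic modular form. Two corrections, though. First, your formula $\xi_{\kappa}(c\,v^{1-\kappa}) = -(1-\kappa)\overline{c}$ has the wrong sign: a direct computation, or comparison with $\xi_{2-2k}\Fc_{2-2k,N} = (2k-1)\Ec_{2k,N}$ in equation \eqref{eq:Eisensteinshadow} (where $v^{2k-1} = v^{1-(2-2k)}$ maps to $+(2k-1)$), gives $\xi_{\kappa}(c\,v^{1-\kappa}) = +(1-\kappa)\overline{c}$; carried through, your sign would produce $C_{(M)}^{-}(0) = +\frac{1}{\sqrt{N}}\,\overline{C_{(S)}^{+}(0)}$. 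Second, the matching is between the two genuine constants, namely $(k-\frac{1}{2})\overline{C_{(M)}^{-}(0)}$ on the left against $-\frac{1}{2\sqrt{N}}C_{(S)}^{+}(0)$ on the right; your statement that one compares the $v^{\frac{1}{2}-k}$-piece of the Shintani side against the constant on the Millson side pairs the wrong terms.

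The genuine gap is the other identity, $C_{(M)}^{+}(0) = -\frac{1}{\sqrt{N}}\,\overline{C_{(S)}^{-}(0)}$. You propose to obtain it by ``a second application of the same differential-equation-plus-Fourier-comparison argument'' or by applying \eqref{eq:thetaliftconnection} ``to the Shintani side''. Neither can work. Equation \eqref{eq:thetaliftconnection} only ever sees $\xi_{\frac{3}{2}-k}I^{(\mathrm{M})}(F,\tau)$, and $\xi_{\frac{3}{2}-k}$ annihilates the entire holomorphic part of the Millson lift, in particular $C_{(M)}^{+}(0)$; iterating the comparison yields nothing new. The symmetric differential equation (for $\xi_{k+\frac{1}{2}}$ applied to the Shintani lift) is not stated in the paper, and even if imported from Alfes--Schwagenscheidt it would relate $\xi_{k+\frac{1}{2}}I^{(\mathrm{S})}(G,\tau)$ to a Millson lift of $\xi_{2k}G = -\Delta_{2-2k}F = 0$; this constrains $C_{(S)}^{-}(0)$ but creates no link whatsoever to $C_{(M)}^{+}(0)$. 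The paper proves this half of the lemma by an entirely different mechanism: a direct comparison of the explicit constant-term formulas in the Fourier expansions of \cite{alneschw18}*{Theorem 5.1} and \cite{alneschw21}*{Theorem 6.1}. Some input of that kind, independent of \eqref{eq:thetaliftconnection}, is unavoidable for this identity, so your proposal as written does not establish the full statement.
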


\begin{proof}
The identity
\begin{align*}
C_{(M)}^{+}(0) = -\frac{1}{\sqrt{N}} \overline{C_{(S)}^{-}(0)}
\end{align*}
follows by comparing the Fourier expansions stated in \cite{alneschw18}*{Theorem 5.1} and \cite{alneschw21}*{Theorem 6.1}. Hence, it remains to prove the other identity. To this end, we use the differential equation \eqref{eq:thetaliftconnection} established in \cite{alneschw21}*{Proposition 5.5} applied to the full Fourier expansions of both theta lifts. It suffices to inspect the constant terms on both sides of the equation. 

We begin by justifying that $I^{(\mathrm{M})}\left(F, \tau\right)$ is a harmonic Maass form of weight $\frac{3}{2}-k$, since $k > 1$. To see this, \cite{alneschw21}*{Propositions 5.4 and 5.6} asserts modularity and harmonicity of $I^{(\mathrm{M})}\left(F, \tau\right)$. To verify the growth condition of $I^{(\mathrm{M})}\left(F, \tau\right)$, we note that the regularized Shintani lift maps $H_{2k}^{!}$ to $H_{k+\frac{1}{2}}^{!}$ according to \cite{alneschw21}*{Theorem 1.1}. Hence, the differential equation \eqref{eq:thetaliftconnection} implies the claimed growth condition since $\ker\left(\xi_{\frac{3}{2}-k}\right) = M_{\frac{3}{2}-k}^!$.

By equation \eqref{eq:hmffourier}, the function $F$ has the constant terms
\begin{align*}
c_F^{+}(0) + c_F^{-}(0)v^{2k-1}
\end{align*}
in its Fourier expansion about $i\infty$. Since $I^{(\mathrm{M})}\left(F, \tau\right)$ is a harmonic Maass form, equation \eqref{eq:hmffourier} implies that $I^{(\mathrm{M})}\left(F, \tau\right)$ has the constant terms
\begin{align*}
c_F^{+}(0)C_{(M)}^{+}(0) + c_F^{-}(0)C_{(M)}^{-}(0)v^{k-\frac{1}{2}}
\end{align*}
in its Fourier expansion about $i\infty$. We compute that
\begin{align*}
\xi_{\frac{3}{2}-k} \left(c_F^{+}(0)C_{(M)}^{+}(0) + c_F^{-}(0)C_{(M)}^{-}(0)v^{k-\frac{1}{2}}\right) &= \left(k-\frac{1}{2}\right) \overline{c_F^{-}(0)} \cdot \overline{C_{(M)}^{-}(0)}.
\end{align*}
On the other hand, $G$ has the constant term
\begin{align*}
\xi_{2-2k} \left(c_F^{+}(0) + c_F^{-}(0)v^{2k-1}\right) &= (2k-1)\overline{c_F^{-}(0)}
\end{align*}
in its Fourier expansion about $i\infty$. Since $G$ is a weakly holomorphic modular form and $k > 1$, $I^{(\mathrm{S})}\left(G, \tau\right)$ is a holomorphic modular form by \cite{alneschw21}*{Theorem 1.1}. Thus, $I^{(\mathrm{S})}\left(G, \tau\right)$ has the constant term
\begin{align*}
(2k-1) C_{(S)}^{+}(0) \overline{c_F^{-}(0)}
\end{align*}
in its Fourier expansion about $i\infty$. By virtue of the differential equation \eqref{eq:thetaliftconnection} relating both theta lifts, we obtain
\begin{align*}
\left(k-\frac{1}{2}\right) \overline{c_F^{-}(0)} \cdot \overline{C_{(M)}^{-}(0)} = -\frac{1}{2\sqrt{N}} (2k-1) C_{(S)}^{+}(0) \overline{c_F^{-}(0)},
\end{align*}
as desired.
\end{proof}

This enables us to state the Fourier expansion of $I_h^{(\mathrm{M})}\left(\Fc_{2-2k,p}, \tau\right)$.
\begin{prop} \label{prop:Millsonfourier}
Let $k \in 2\N$, $N=p$ be prime, and $I_h^{(\mathrm{M})}$ be the $h$-th component of the regularized Millson theta lift. Then we have
\begin{multline*}
I_h^{(\mathrm{M})}\left(\Fc_{2-2k,p}, \tau\right) \\
= \delta_{h=0} \frac{\Gamma(k)\zeta(k)}{2^{k-1}\pi^k}\left(\frac{\pi}{2^{k-1} \sqrt{p}} \left(p^k\Ks_{p}(0,0;2k) + p^{1-k}\widetilde{\Ks}_{p}(0,0;2k)\right) - (-1)^{\frac{k}{2}} v^{k-\frac{1}{2}} \right) \\
+ \frac{p^{\frac{k-1}{2}}}{2^{2k-1}\pi^{k-1}} \sum_{n > 0} \frac{1}{n^{\frac{k}{2}}} \tr\left(R_{2-2k}^{k-1} \Fc_{2-2k,p},-n,h\right)q^n \\
- \frac{1}{2^{2k}\pi^{k-\frac{1}{2}}} \sum_{n < 0} \frac{1}{\vt{n}^{k-\frac{1}{2}}} \overline{\tr\left(\xi_{2-2k}\Fc_{2-2k,p},-n,h\right)} \Gamma\left(k-\frac{1}{2},4\pi\vt{n}v\right)q^n.
\end{multline*}
\end{prop}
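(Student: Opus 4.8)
The plan is to read off the expansion from Alfes and Schwagenscheidt's Fourier expansion formula for the regularized Millson lift \cite{alneschw18}*{Theorem 5.1}, fed with the Fourier data of $\Fc_{2-2k,p}$ from Lemma \ref{lem:Eisensteinfourier}, the cusp parameters from Corollary \ref{cor:cuspparameters}, and the constant-term correction from Lemma \ref{lem:liftconstantterms}. First I would invoke \cite{alneschw18}*{Theorem 5.1}, with the switch to the dual Weil representation (and the attendant sign and orientation changes) handled exactly as in the proof of Proposition \ref{prop:Shintanifourier}. As explained at the start of this subsection, that theorem presupposes that $\xi_{2-2k}F$ is cuspidal, which fails here since $\xi_{2-2k}\Fc_{2-2k,p} = (2k-1)\Ec_{2k,p}$ by equation \eqref{eq:Eisensteinshadow}; the remedy is to add the contributions of the coefficients $c_F^-(n)$ with $n \geq 0$, and by Lemma \ref{lem:Eisensteinfourier} (i) these vanish for $n > 0$, so the only correction is from $c_F^-(0) = 1$ and it affects only the constant term.

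For the non-constant coefficients I would substitute into the formula of \cite{alneschw18}*{Theorem 5.1} the values $k_{\af} = m_{\af} = 0$, $d_{\af} = 1$ and the fact that a nonvanishing constant term forces $h = 0$, all supplied by Corollary \ref{cor:cuspparameters}. Since the Millson Schwartz function $\psi_{k-1}$ carries $k-1$ derivatives, which turn the weight-$(2-2k)$ input into a weight-$0$ object, the positive-index coefficients come out as multiples of $\tr(R_{2-2k}^{k-1}\Fc_{2-2k,p}, -n, h)$, a trace of a weight-$0$ function at Heegner points, with the power $n^{-k/2}$; the non-holomorphic part is governed by the shadow $\xi_{2-2k}\Fc_{2-2k,p}$, producing the negative-index coefficients as multiples of $\vt{n}^{-(k-\frac{1}{2})}\,\overline{\tr(\xi_{2-2k}\Fc_{2-2k,p}, -n, h)}\,\Gamma(k-\frac{1}{2}, 4\pi\vt{n}v)$. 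For the constant term (where $h = 0$ by Corollary \ref{cor:cuspparameters}) I would assemble three pieces: the holomorphic contribution of $c_F^+(0)$ at the cusp $i\infty$, which by Lemma \ref{lem:Eisensteinfourier} (i) yields the $p^k\Ks_p(0,0;2k)$ term; the holomorphic contribution at the cusp $0$, which by Lemma \ref{lem:Eisensteinfourier} (ii) yields the $p^{1-k}\widetilde{\Ks}_p(0,0;2k)$ term, noting that $\delta_{N=1} = 0$ guarantees there is no $v^{2k-1}$ term at $0$; and the $c_F^-(0) = 1$ correction, for which Lemma \ref{lem:liftconstantterms} gives $C_{(M)}^+(0) = -N^{-1/2}\,\overline{C_{(S)}^-(0)} = 0$, because the Shintani lift of the weakly holomorphic form $\xi_{2-2k}\Fc_{2-2k,p}$ has trivial non-holomorphic part, and $C_{(M)}^-(0) = -N^{-1/2}\,\overline{C_{(S)}^+(0)}$, with $C_{(S)}^+(0)$ read off from the proof of Proposition \ref{prop:Shintanifourier} applied to $(2k-1)\Ec_{2k,p}$; this produces the remaining piece $-(-1)^{k/2}\frac{\Gamma(k)\zeta(k)}{2^{k-1}\pi^k}v^{k-\frac{1}{2}}$.

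The main obstacle will be the constant bookkeeping: reconciling Alfes and Schwagenscheidt's normalizations (signature $(1,2)$ against our $(2,1)$, the dual Weil representation, their lattice and cusp-width conventions) with ours, simplifying the recurring factors such as $(-2i)^{2-2k} = -2^{2-2k}$ (valid since $k \in 2\N$) together with the $\Gamma$-values and the powers of $\pi$ and $\sqrt{p}$, and above all correctly merging the two cusp contributions with the Lemma \ref{lem:liftconstantterms} correction into the single constant term displayed in the statement. All the conceptual input is already contained in the cited results; what remains is a careful but routine computation.
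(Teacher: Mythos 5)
Your proposal follows the paper's proof essentially verbatim: both read off \cite{alneschw18}*{Theorem 5.1} with the dual-Weil-representation adjustments, feed in the cusp data from Lemma \ref{lem:cuspparameters} and Corollary \ref{cor:cuspparameters} together with the Fourier expansions of Lemma \ref{lem:Eisensteinfourier} (i), (ii), and obtain the $v^{k-\frac{1}{2}}$-correction from Lemma \ref{lem:liftconstantterms} combined with Proposition \ref{prop:Shintanifourier}, exactly as in the paper. The only detail you leave implicit is why the complementary traces appearing in \cite{alneschw18}*{Theorem 5.1} drop out (they vanish because the principal parts of $\Fc_{2-2k,p}$ at both cusps are constant) and that the two imaginary quadratic traces in the holomorphic part combine since $k$ is even; both are routine once the Fourier data is inserted.
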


\begin{rmk}
The constant term of the holomorphic part can be simplified further by utilizing Lemma \ref{lem:Kloostermanzetaevals} (ii) and (iii), see \cite{bemo2}*{p.\ 32} as well.
\end{rmk}

\begin{proof}
We recall that Alfes and Schwagenscheidt work with the dual Weil representation and that we need to shift $k \mapsto k-1$ in their result. Since $\xi_{2-2k}\Fc_{2-2k,p} = (2k-1)\Ec_{2k,p}$ by equation \eqref{eq:Eisensteinshadow}, Lemma \ref{lem:liftconstantterms} implies that the Fourier expansion of $I_h^{(\mathrm{M})}\left(\Fc_{2-2k,p}, \tau\right)$ is given by \cite{alneschw18}*{Theorem 5.1} plus the term $-\frac{1}{\sqrt{p}} \overline{C_{(S)}^{+}(0)} v^{k-\frac{1}{2}}$.
\begin{enumerate}
\item According to Proposition \ref{prop:Shintanifourier}, we have
\begin{align*}
-\frac{1}{\sqrt{p}} \overline{C_{(S)}^{+}(0)} v^{k-\frac{1}{2}} = -\delta_{h=0}(-1)^{\frac{k}{2}} \frac{\Gamma(k)\zeta(k)}{2^{k-1}\pi^k} v^{k-\frac{1}{2}}.
\end{align*}

\item The constant term of the holomorphic part of $I^{(\mathrm{M})}\left(\Fc_{2-2k,p}, \tau\right)$ is obtained by evaluating the corresponding term in \cite{alneschw18}*{Theorem 5.1}. To this end, we combine Lemma \ref{lem:Eisensteinfourier} (i), (ii) as well as Lemmas \ref{lem:cuspparameters} and Corollary \ref{cor:cuspparameters}. 

\item The complementary trace inside the Fourier expansion of $I_h^{(\mathrm{M})}\left(\Fc_{2-2k,p}, \tau\right)$ vanishes, because the principal parts of $\Fc_{2-2k,p}$ about the cusps are constant, and the complementary trace depends on the non-constant terms in the principal part of $\Fc_{2-2k,p}$ about the inequivalent cusps (see equation \eqref{eq:hmffourier}). 

\item Both the holomorphic and non-holomorphic part of $I^{(\mathrm{M})}\left(\Fc_{2-2k,p}, \tau\right)$ can be read off from \cite{alneschw18}*{Theorem 5.1} directly noting that the two imaginary quadratic traces in the holomorphic part add up since $k$ is even. \qedhere
\end{enumerate}
\end{proof}

\section{Proof of Theorem \ref{thm:shitanilift} and Corollary \ref{cor:shintanicor}} \label{sec:proofShintani}

\subsection{Real quadratic traces of holomorphic Eisenstein series}

We evaluate the Fourier coefficients appearing in Proposition \ref{prop:Shintanifourier}. To prove Theorem \ref{thm:shitanilift}, it suffices to focus on the non-regularized quadratic traces. The following result is based on seminal work by Duke, Imamo\={g}lu and T\'{o}th \cite{dit11annals}*{Proposition 4} and Kohnen \cite{koh85}*{Proposition 5}.

\begin{prop} \label{prop:Eisensteintraces}
\
\begin{enumerate}[label={\rm (\roman*)}]
\item If $N \in \N$ and $D > 0$ is a non-square discriminant then
\begin{multline*}
\hspace*{\leftmargini} \sum_{Q \in \Qc_{N,D} \slash \Gamma_0(N)} \int_{\Gamma_0(N) \backslash S_Q} \Ec_{2k,N}(z) Q(z,1)^{k-1} \dm z \\
= D^{k-\frac{1}{2}} \frac{(-1)^k\sqrt{\pi}\Gamma(k)}{2^{2k-2}\Gamma\left(k+\frac{1}{2}\right)} \sum_{\substack{a \geq 1 \\ N \mid a}} \sum_{\substack{0 \leq b < 2a \\ b^2 \equiv D \pmod*{4a}}} \frac{1}{a^k}. 
\end{multline*}
\item Recall the Kloosterman sum $K_{\kappa}(m,n;c)$ from equation \eqref{eq:Kloostermansumdef}. Let $N$ be square-free and $D > 0$. We have
\begin{multline*}
\hspace*{\leftmargini} \sum_{\substack{a \geq 1 \\ N \mid a}} \sum_{\substack{0 \leq b < 2a \\ b^2 \equiv D \pmod*{4a}}} \frac{1}{a^k} = (-1)^{k} i \left(1-(-1)^{k}i\right) 4^{k-\frac{1}{2}} \zeta(k) \\
\times \sum_{\ell \mid N} \mu(\ell) \sum_{r \mid \ell} \mu(r)r^{-k} \sum_{\substack{c \geq 1 \\ \gcd(\ell,c) = 1}} \frac{1+\left(\frac{4}{c}\right)}{(4c)^{k+\frac{1}{2}}} K_{\frac{3}{2}-k}(0,-D;4c).
\end{multline*}
\item If $N = p$ is an odd prime then
\begin{multline*}
\hspace*{\leftmargini} \sum_{\substack{a \geq 1 \\ p \mid a}} \sum_{\substack{0 \leq b < 2a \\ b^2 \equiv D \pmod*{4a}}} \frac{1}{a^k} = (-1)^{k} i \left(1-(-1)^{k}i\right) 4^{k-\frac{1}{2}} \zeta(k) \\
\times \left(p^{-k} \Ks_{\frac{3}{2}-k,4}^{+}\left(0,-D;k+\frac{1}{2}\right) + \left(1-p^{-k}\right) \Ks_{\frac{3}{2}-k,4p}^{+}\left(0,-D;k+\frac{1}{2}\right) \right).
\end{multline*}
\end{enumerate}
\end{prop}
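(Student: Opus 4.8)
\emph{Overview.} The three parts correspond to three different mechanisms and I would prove them in the stated order: part (i) is a geometric unfolding, part (ii) is the arithmetic heart (reorganizing quadratic Gauss sums into half-integral weight Kloosterman sums), and part (iii) is bookkeeping that deduces the prime-level case from (ii).

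\emph{Part (i).} Since $k>1$ the Eisenstein series $\Ec_{2k,N}=\sum_{\gamma\in\Gamma_\infty\backslash\Gamma_0(N)}1\vert_{2k}\gamma$ converges absolutely, and since $D$ is not a square each $S_Q$ descends to a compact closed geodesic, so I would combine the two sums and unfold: $\sum_{Q\in\Qc_{N,D}/\Gamma_0(N)}\sum_{\gamma\in\Gamma_\infty\backslash\Gamma_0(N)}$ collapses to a single sum over $\Qc_{N,D}/\Gamma_\infty$, and because $\Gamma_\infty\cap(\Gamma_0(N))_Q=\{\pm I\}$ the cycle integral unfolds to the full semicircle $S_Q$ with the constant function in place of $\Ec_{2k,N}$. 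The $\Gamma_\infty$-orbits of $Q=[a,b,c]\in\Qc_{N,D}$ are indexed by the leading coefficient $a$ (with $N\mid a$, of either sign) together with $b\bmod 2a$ subject to $b^2\equiv D\pmod*{4a}$, with $c$ determined. Parametrising $S_Q$ by $z=\frac{-b}{2a}+\frac{\sqrt D}{2\vt a}e^{i\theta}$, $\theta\in(0,\pi)$, a direct computation reduces the contribution of one orbit to $\frac{(2i)^{k-1}i}{2\cdot 4^{k-1}}\frac{D^{k-1/2}}{a^{k}}\int_0^{\pi}e^{ik\theta}\sin^{k-1}\theta\,\dm\theta$ (with the orientation of $S_Q$ chosen so that the two signs of $a$ contribute equally, accounting for an overall factor $2$); evaluating the beta-type integral as $\frac{\sqrt\pi\,i^{k}\Gamma(k)}{2^{k-1}\Gamma(k+\frac12)}$ and collecting the powers of $i$ (note $i\cdot i^{k-1}\cdot i^{k}=i^{2k}=(-1)^k$) gives exactly the stated formula. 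This is the level-$N$, weight-$2k$ analogue of the unfolding in \cite{dit11annals}*{Proposition 4} and \cite{koh85}*{Proposition 5}.

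\emph{Part (ii).} Write $\rho(D;4a):=\#\{b\bmod 2a:b^2\equiv D\pmod*{4a}\}$; since $b\mapsto b+2a$ is a fixed-point-free involution on $\{b\bmod 4a:b^2\equiv D\pmod*{4a}\}$ one has $\rho(D;4a)=\frac12\#\{b\bmod 4a:b^2\equiv D\pmod*{4a}\}$, and expanding the congruence by additive characters produces quadratic Gauss sums $G(t;4a)=\sum_{b\bmod 4a}e^{2\pi itb^2/4a}$. The plan is then: (a) reduce to the primitive case via $G(gt_1;gc)=g\,G(t_1;c)$, giving $\rho(D;4a)=\sum_{c\mid 4a}\frac{1}{2c}\sum_{t_1\bmod c,\,\gcd(t_1,c)=1}e^{-2\pi it_1D/c}G(t_1;c)$; (b) sum against $a^{-k}$ and interchange, where the inner sum $\sum_{a\ge1,\,c\mid 4a}a^{-k}=\gcd(c,4)^kc^{-k}\zeta(k)$ produces precisely the factor $\zeta(k)$ on the right-hand side; (c) insert Gauss's evaluation of $G(t_1;c)$ (zero for $c\equiv2\pmod*4$, $\varepsilon_c\sqrt c\,(\tfrac{t_1}{c})$ for odd $c$, $(1+i)\varepsilon_{t_1}^{-1}\sqrt c\,(\tfrac{c}{t_1})$ for $4\mid c$), reorganize the $4\mid c$ part directly as the weight-$\frac32$ Kloosterman sum $K_{\frac32}(0,-D;4c)$ and, by quadratic reciprocity, rewrite the odd-modulus part as the remaining $K_{\frac32}(0,-D;4c)$-terms carrying the Euler factor $1+(\tfrac4c)$, altogether producing the constant $(-1)^ki(1-(-1)^ki)4^{k-1/2}$; here \eqref{eq:Kloostermanconnection} lets one replace $K_{\frac32}$ by $K_{\frac32-k}$ when $k$ is even (and $K_{\frac12}$ by $K_{\frac32-k}$ when $k$ is odd); (d) impose $N\mid a$ via $\sum_{d\mid N}\mu(d)[d\mid a]$ and disentangle the local factors at the primes dividing $N$ from the free sum over $c$ by a second Möbius inversion, which gives the nested sum $\sum_{\ell\mid N}\mu(\ell)\sum_{r\mid\ell}\mu(r)r^{-k}\sum_{c,\,\gcd(\ell,c)=1}$. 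This is the level-$N$, weight-$2k$ extension of \cite{koh85}*{Proposition 5}.

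\emph{Part (iii) and the main obstacle.} For $N=p$ the $\ell$-sum in (ii) has only the terms $\ell=1$ and $\ell=p$. The $\ell=1$ term is $\sum_{c\ge1}\frac{1+(\frac4c)}{(4c)^{k+1/2}}K_{\frac32-k}(0,-D;4c)=\Ks_{\frac32-k,4}^{+}(0,-D;k+\tfrac12)$ by \eqref{eq:Ksdef}. The $\ell=p$ term is $-(1-p^{-k})\sum_{c\ge1,\,p\nmid c}\frac{1+(\frac4c)}{(4c)^{k+1/2}}K_{\frac32-k}(0,-D;4c)$; splitting $\sum_{p\nmid c}=\sum_{c\ge1}-\sum_{p\mid c}$ and substituting $c=pc'$ in the last sum — using that $p$ is odd, so $(\tfrac{4}{pc'})=(\tfrac4{c'})$, and that the modulus $4pc'$ matches the coprimality condition — identifies $\sum_{p\mid c}\frac{1+(\frac4c)}{(4c)^{k+1/2}}K_{\frac32-k}(0,-D;4c)$ with $\Ks_{\frac32-k,4p}^{+}(0,-D;k+\tfrac12)$ by \eqref{eq:Ksdef}, and adding the two contributions gives $p^{-k}\Ks_{\frac32-k,4}^{+}(0,-D;k+\tfrac12)+(1-p^{-k})\Ks_{\frac32-k,4p}^{+}(0,-D;k+\tfrac12)$. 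The only genuinely delicate point in this whole argument is step (c) of part (ii): making the $2$-adic Gauss-sum bookkeeping come out so that the odd-modulus and the $4\mid c$ contributions assemble into a single plus-space Dirichlet series with exactly the Euler factor $1+(\tfrac4c)$ and the constant $(-1)^ki(1-(-1)^ki)4^{k-1/2}$ (the sign and the $\varepsilon$-factors here depend on $D\bmod 4$), together with verifying that the double Möbius inversion of step (d) reproduces the level-$p$ local factors. Parts (i) and (iii) are then routine once (ii) is available.
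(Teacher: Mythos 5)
Your overall architecture (unfold in (i), convert the representation counts to Kloosterman sums in (ii), specialize by M\"obius bookkeeping in (iii)) is the same as the paper's, and parts (i) and (iii) agree with it in essentially every detail: the paper also unfolds the $\Gamma_0(N)$-invariant $1$-form to $\Qc_{N,D}/\Gamma_\infty$, doubles via $a\mapsto -a$, uses the bijection of \cite{ilt22}*{Lemma 3.2}, and evaluates $\int_0^{\pi}(e^{2i\theta}-1)^k\sin^{-1}(\theta)\,\dm\theta=\tfrac{2(-1)^k\sqrt{\pi}\,\Gamma(k)}{\Gamma(k+\frac12)}$; your prefactors are consistent with this. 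The one genuine divergence is in part (ii): you propose to re-derive the identity counting square roots of $D$ modulo $4a$ from scratch via quadratic Gauss sums, Gauss's evaluation, and quadratic reciprocity --- precisely the step you admit is delicate and do not execute. The paper short-circuits this entirely by quoting Kohnen's count \cite{koh85}*{Proposition 5} in the form of \cite{dit11annals}*{Proposition 3}, namely $\sum_{0\le b<2a,\ b^2\equiv D\,(4a)}1=\sum_{d\mid a}\sqrt{d}\,K_{k+\frac12}^{+}(0,D;d)$ with $K_{k+\frac12}^{+}(m,n;c)=(1-(-1)^ki)(1+(\tfrac4c))\tfrac{1}{4c}K_{k+\frac12}(m,n;4c)$; after that, your steps (b)--(d) are exactly the paper's Dirichlet-convolution manipulation, the evaluation $\sum_{\gcd(r,\ell)=1}r^{-k}=\zeta(k)\sum_{r\mid\ell}\mu(r)r^{-k}$, and the conversion $K_{k+\frac12}(0,D;4c)=(-1)^kiK_{\frac32-k}(0,-D;4c)$ from equation \eqref{eq:Kloostermanconnection}. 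So citing Kohnen turns your hardest step into a one-line reference; carrying out the Gauss-sum computation would make the argument self-contained but is real work that your sketch leaves open. Two small corrections: the detector of the condition $N\mid a$ for square-free $N$ is $\sum_{\ell\mid N}\mu(\ell)\,[\gcd(\ell,a)=1]$ (Lemma \ref{lem:levellemma}), not $\sum_{d\mid N}\mu(d)\,[d\mid a]$, which instead detects $\gcd(N,a)=1$; and in part (iii) you should make explicit, as the paper does, the resummation $\sum_{c\ge1}h(c)-(1-p^{-k})\sum_{\gcd(p,c)=1}h(c)=p^{-k}\sum_{c\ge1}h(c)+(1-p^{-k})\sum_{p\mid c}h(c)$ before matching the two pieces with $\Ks^{+}_{\frac32-k,4}$ and $\Ks^{+}_{\frac32-k,4p}$; your observation that $(\tfrac{4}{pc'})=(\tfrac{4}{c'})$ for odd $p$ is indeed the point needed for the second identification.
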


\begin{proof}[Proof of Proposition \ref{prop:Eisensteintraces}]
\
\begin{enumerate}[label={\rm (\roman*)}]
\item If $D \neq \square$ then the integral converges without regularization required. We follow an argument by Duke, Imamo\={g}lu and T\'{o}th \cite{dit11annals}*{Section 4} and adapt it slightly to higher levels. We give a proof for the convenience of the reader.

Since $\Ec_{2k,N}(z) Q(z,1)^{k-1} \dm z$ is a $\Gamma_0(N)$-invariant differential $1$-form, unfolding yields
\begin{align*}
\hspace*{\leftmargini} \sum_{Q \in \Qc_{N,D} \slash \Gamma_0(N)} \int_{\Gamma_0(N) \backslash S_Q} \Ec_{2k,N}(z) Q(z,1)^{k-1} \dm z 
= \sum_{Q \in \Qc_{N,D} \slash \Gamma_{\infty}} \int_{S_Q} Q(z,1)^{k-1} \dm z.
\end{align*}
Let $\Qc_{p, n}^{+}$ be the subset of integral binary quadratic forms with positive first coefficient $a$. Since the orientation of the integral is determined by $\mathrm{sgn}(a)$, mapping $a \mapsto -a$ gives
\begin{align*}
\hspace*{\leftmargini} \sum_{Q \in \Qc_{N,D} \slash \Gamma_0(N)} \int_{\Gamma_0(N) \backslash S_Q} \Ec_{2k,N}(z) Q(z,1)^{k-1} \dm z = 2 \sum_{Q \in \Qc_{N,D}^{+} \slash \Gamma_{\infty}} \int_{S_Q} Q(z,1)^{k-1} \dm z.
\end{align*}
By \cite{ilt22}*{Lemma 3.2}, the map
\begin{align*}
\left\{(a,b) \colon a > 0, N \mid a, b \pmod*{2a}, b^2 \equiv D \pmod*{4a}\right\} &\to \Gamma_{\infty} \backslash \Qc_{N,D}^{+} \\
(a,b) &\mapsto \left[a,b,\frac{b^2-D}{4a}\right]
\end{align*}
is a bijection. We infer
\begin{multline*}
\hspace*{\leftmargini} \sum_{Q \in \Qc_{N,D} \slash \Gamma_0(N)} \int_{\Gamma_0(N) \backslash S_Q} \Ec_{2k,N}(z) Q(z,1)^{k-1} \dm z \\
= 2 \sum_{\substack{a \geq 1 \\ N \mid a}} \sum_{\substack{0 \leq b < 2a \\ b^2 \equiv D \pmod*{4a}}} \int_{S_{\left[a,b,\frac{b^2-D}{4a}\right]}} 
\left(az^2+bz+\frac{b^2-D}{4a}\right)^{k-1} \dm z.
\end{multline*}
We recall that the semicircle $S_{Q}$, $Q=\big[a,b,\frac{b^2-D}{4a}\big]$, is centered at $-\frac{b}{2a}$ and of radius $\frac{\sqrt{D}}{2a}$, because its endpoints are the two real-quadratic roots $\frac{-b\pm\sqrt{D}}{2a} $ of $Q$ and its apex is $\tau_Q$. Hence, a parametrization of $S_Q$ is given by
\begin{align*}
\theta &\mapsto -\frac{b}{2a} + \frac{\sqrt{D}}{2a}e^{i\theta}, \qquad 0 < \theta < \pi.
\end{align*}
It follows that (\cite{dit11annals}*{p.\ 968})
\begin{align*}
\hspace*{\leftmargini} Q(z,1) = \frac{D}{4a} \left(e^{i\theta}-1\right)\left(e^{i\theta}+1\right), \quad
\dm z = i\frac{\sqrt{D}}{2a}e^{i\theta} \dm\theta, \quad \frac{ie^{i\theta}}{\left(e^{i\theta}-1\right)\left(e^{i\theta}+1\right)} = \frac{1}{2\sin(\theta)}.
\end{align*}
Thus, we obtain
\begin{multline*}
\hspace*{\leftmargini} \sum_{Q \in \Qc_{N,D} \slash \Gamma_0(N)} \int_{\Gamma_0(N) \backslash S_Q} \Ec_{2k,N}(z) Q(z,1)^{k-1} \dm z \\
= 2^{1-2k} D^{k-\frac{1}{2}} \int_{0}^{\pi} \left(e^{2i\theta}-1\right)^{k} \frac{1}{\sin(\theta)} \dm \theta \sum_{\substack{a \geq 1 \\ N \mid a}} \sum_{\substack{0 \leq b < 2a \\ b^2 \equiv D \pmod*{4a}}} \frac{1}{a^k}. 
\end{multline*}
Lastly, we compute that
\begin{align*}
\hspace*{\leftmargini} \int_{0}^{\pi} \left(e^{2i\theta}-1\right)^{k} \frac{1}{\sin(\theta)} \dm \theta = \left[-2 (-1)^k e^{i\theta} {}_2F_1\left(\frac{1}{2},1-k;\frac{3}{2};e^{2i\theta}\right)\right]_{\theta = 0}^{\theta=\pi} = \frac{2(-1)^k\sqrt{\pi}\Gamma(k)}{\Gamma\left(k+\frac{1}{2}\right)}.
\end{align*}
This proves (i).

\item Let
\begin{align*}
K_{k+\frac{1}{2}}^{+}(m,n;c) \coloneqq \left(1-(-1)^{k}i\right)\left(1+\left(\frac{4}{c}\right)\right) \frac{1}{4c} K_{k+\frac{1}{2}}(m,n;4c)
\end{align*}
be \emph{Kohnen's plus space Kloosterman sum} \cite{koh85}*{eq.\ (26)}. According to Kohnen \cite{koh85}*{Proposition 5} (in a slightly modified form stated by Duke, Imamo\={g}lu and T\'{o}th \cite{dit11annals}*{Proposition 3}), we have
\begin{align*}
\sum_{\substack{0 \leq b < 2a \\ b^2 \equiv D \pmod*{4a}}} 1 = \sum_{d \mid a} \sqrt{\frac{a}{d}} K_{k+\frac{1}{2}}^{+}\left(0,D;\frac{a}{d}\right) = \sum_{d \mid a} \sqrt{d} K_{k+\frac{1}{2}}^{+}\left(0,D;d\right).
\end{align*}
Since $N$ is square-free, we may use Lemma \eqref{lem:levellemma}. Thus, we obtain
\begin{align*}
\sum_{\substack{a \geq 1 \\ N \mid a}} \sum_{\substack{0 \leq b < 2a \\ b^2 \equiv D \pmod*{4a}}} \frac{1}{a^k} 
&= \sum_{\ell \mid N} \mu(\ell) \sum_{a \geq 1} \frac{\chi_{\ell}(a)^2}{a^k} \sum_{dr = a} \sqrt{d} K_{k+\frac{1}{2}}^{+}\left(0,D;d\right) \\
&= \sum_{\ell \mid N} \mu(\ell) \sum_{r \geq 1} \frac{\chi_{\ell}(r)^2}{r^k} \sum_{d \geq 1} \frac{\chi_{\ell}(d)^2}{d^{k-\frac{1}{2}}} K_{k+\frac{1}{2}}^{+}\left(0,D;d\right).
\end{align*}
We infer
\begin{align*}
\sum_{r \geq 1} \frac{\chi_{\ell}(r)^2}{r^k} = L_{\ell}(k,\id) = \zeta(k) \prod_{\substack{p \text{ prime} \\ p\mid \ell}} \left(1-p^{-k}\right) = \zeta(k) \sum_{r \mid \ell} \mu(r)r^{-k},
\end{align*}
and
\begin{multline*}
\hspace*{\leftmargini} \sum_{d \geq 1} \frac{\chi_{\ell}(d)^2}{d^{k-\frac{1}{2}}} K_{k+\frac{1}{2}}^{+}\left(0,D;d\right) 
= \left(1-(-1)^{k}i\right) 4^{k-\frac{1}{2}} \sum_{\substack{c \geq 1 \\ \gcd(\ell,c) = 1}} \frac{1+\left(\frac{4}{c}\right)}{(4c)^{k+\frac{1}{2}}} K_{k+\frac{1}{2}}(0,D;4c) \\
= (-1)^{k} i \left(1-(-1)^{k}i\right) 4^{k-\frac{1}{2}} \sum_{\substack{c \geq 1 \\ \gcd(\ell,c) = 1}} \frac{1+\left(\frac{4}{c}\right)}{(4c)^{k+\frac{1}{2}}} K_{\frac{3}{2}-k}(0,-D;4c)
\end{multline*}
by equation \eqref{eq:Kloostermanconnection}. Combining, we obtain (ii).

\item We abbreviate
\begin{align*}
h(c) \coloneqq \frac{1+\left(\frac{4}{c}\right)}{(4c)^{k+\frac{1}{2}}} K_{\frac{3}{2}-k}(0,-D;4c).
\end{align*}
On one hand, if $N = p$ is an odd prime then (ii) evaluates to
\begin{align*}
\hspace*{\leftmargini} \sum_{\ell \mid p} \mu(\ell) \sum_{r \mid \ell} \mu(r)r^{-k} \sum_{\substack{c \geq 1 \\ \gcd(\ell,c) = 1}} h(c) = \sum_{c \geq 1} h(c) - \sum_{\substack{c \geq 1 \\ \gcd(p,c) = 1}} h(c) + p^{-k}\sum_{\substack{c \geq 1 \\ \gcd(p,c) = 1}} h(c).
\end{align*}
On the other hand, Lemma \ref{lem:levellemma} implies that
\begin{align*}
\sum_{\substack{c \geq 1 \\ p \mid c}} h(c) = \sum_{c \geq 1} h(c) - \sum_{\substack{c \geq 1 \\ \gcd(p,c) = 1}} h(c).
\end{align*}
Hence,
\begin{multline*}
\hspace*{\leftmargini} \sum_{\ell \mid p} \mu(\ell) \sum_{r \mid \ell} \mu(r)r^{-k} \sum_{\substack{c \geq 1 \\ \gcd(\ell,c) = 1}} h(c) = \sum_{\substack{c \geq 1 \\ p \mid c}} h(c) + p^{-k} \sum_{\substack{c \geq 1 \\ \gcd(p,c) = 1}} h(c) \\
= \sum_{\substack{c \geq 1 \\ p \mid c}} h(c) + p^{-k}\Bigg(\sum_{c \geq 1} h(c) - \sum_{\substack{c \geq 1 \\ p \mid c}} h(c)\Bigg) 
= p^{-k}\sum_{c \geq 1} h(c) + \left(1-p^{-k}\right)\sum_{\substack{c \geq 1 \\ p \mid c}} h(c).
\end{multline*}
This proves (iii). \qedhere
\end{enumerate}
\end{proof}

We infer the following immediate corollary.
\begin{cor} \label{cor:Eisensteintraces}
If $N = p$ is an odd prime and $1 \leq D \neq \square$ then
\begin{multline*}
\sum_{Q \in \Qc_{p,D} \slash \Gamma_0(p)} \int_{\Gamma_0(p) \backslash S_Q} \Ec_{2k,p}(z) Q(z,1)^{k-1} \dm z = (-1)^{\lfloor \frac{k}{2} \rfloor} \frac{\Gamma(k)\zeta(k)}{2^{k-1} \pi^k} \\
\times \left(p^{-k} \frac{H_{k,1,1}(D)}{\zeta(1-2k)} + \left(1-p^{-k}\right) \left(\frac{p-1}{p^{2k}-p} \frac{H_{k,1,p}(D)}{\zeta(1-2k)} + \frac{H_{k,p,p}(D)}{L_{p}(1-2k,\id)} \right) \right).
\end{multline*}
\end{cor}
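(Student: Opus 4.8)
The plan is to chain the two halves of Proposition~\ref{prop:Eisensteintraces} together and then translate the resulting plus space Kloosterman zeta values into generalized Hurwitz class numbers via Proposition~\ref{prop:Kloostermanzetapeiwang}(ii). Since $D>0$ is not a square the geodesics $S_Q$ are compact modulo their stabilizers, so the cycle integrals converge and no regularization enters; the statement is then purely a matter of identifying constants.

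First I would substitute Proposition~\ref{prop:Eisensteintraces}(iii) into Proposition~\ref{prop:Eisensteintraces}(i). This writes the left-hand side of the corollary as the product of $D^{k-\frac12}$, the constant $\frac{(-1)^k\sqrt{\pi}\,\Gamma(k)}{2^{2k-2}\,\Gamma(k+\frac12)}$, the factor $(-1)^k i\bigl(1-(-1)^k i\bigr)4^{k-\frac12}\zeta(k)$, and the linear combination $p^{-k}\Ks_{\frac32-k,4}^{+}(0,-D;k+\frac12)+(1-p^{-k})\Ks_{\frac32-k,4p}^{+}(0,-D;k+\frac12)$. By \eqref{eq:Kloostermanconnection} the plus space Kloosterman zeta function depends on its weight only modulo $2$, so $\Ks_{\frac32-k,4N}^{+}=\Ks_{\frac32,4N}^{+}$ when $k$ is even and $\Ks_{\frac32-k,4N}^{+}=\Ks_{\frac12,4N}^{+}$ when $k$ is odd; this is precisely the normalization occurring in Proposition~\ref{prop:Kloostermanzetapeiwang}.

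Next I would invoke Proposition~\ref{prop:Kloostermanzetapeiwang}(ii) at level $N=p$, and its level $4$ analogue at $N=1$ (which is Cohen's original computation; see the remarks after Proposition~\ref{prop:Kloostermanzetapeiwang} as well as \cite{andu}*{Proposition 5.7} and \cite{ibusa}*{Proposition 2.3}). Taking complex conjugates of those identities and using that the generalized Hurwitz class numbers and the values $L_\ell(1-2k,\id)=\zeta(1-2k)\prod_{p'\mid\ell}(1-p'^{\,2k-1})$ are real, while $\overline{(-2\pi i)^{k+\frac12}}=(2\pi i)^{k+\frac12}$ on the principal branch, one gets
\[
\Ks_{\frac32-k,4}^{+}\!\left(0,-D;k+\tfrac12\right)D^{k-\frac12}=\frac{\Gamma\!\left(k+\tfrac12\right)}{(2\pi i)^{k+\frac12}}\,\frac{H_{k,1,1}(D)}{\zeta(1-2k)}
\]
and, since the sum over $\ell\mid p$ in Proposition~\ref{prop:Kloostermanzetapeiwang}(ii) has just the two terms $\ell=1$ and $\ell=p$,
\[
\Ks_{\frac32-k,4p}^{+}\!\left(0,-D;k+\tfrac12\right)D^{k-\frac12}=\frac{\Gamma\!\left(k+\tfrac12\right)}{(2\pi i)^{k+\frac12}}\left(\frac{p-1}{p^{2k}-p}\,\frac{H_{k,1,p}(D)}{\zeta(1-2k)}+\frac{H_{k,p,p}(D)}{L_p(1-2k,\id)}\right).
\]
Substituting these back cancels the factors $D^{k-\frac12}$ and $\Gamma(k+\frac12)$ and reproduces the bracket on the right-hand side of the corollary.

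What remains is to simplify the leftover constant, and this is the only step needing genuine care. Writing $4^{k-\frac12}=2^{2k-1}$, $(2\pi i)^{k+\frac12}=(2\pi)^{k+\frac12}i^k i^{1/2}$ with $i^{1/2}=(1+i)/\sqrt2$, and using $(-1)^k(-1)^k=1$ together with $i\bigl(1-(-1)^k i\bigr)=(-1)^k+i$, the powers of $2$, of $\sqrt\pi$ and of $\pi$ recombine so that the prefactor collapses to $\frac{\Gamma(k)\zeta(k)}{2^{k-1}\pi^k}$ times the scalar $\frac{(-1)^k+i}{i^k(1+i)}$. A short case distinction then finishes the proof: for $k$ even one has $i^k=(-1)^{k/2}$ and $(-1)^k+i=1+i$, so the scalar is $(-1)^{k/2}$; for $k$ odd one has $i^k=i(-1)^{(k-1)/2}$, $(-1)^k+i=-(1-i)$ and $(1-i)/(1+i)=-i$, so the scalar is $(-1)^{(k-1)/2}$. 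In both cases this equals $(-1)^{\lfloor k/2\rfloor}$, which yields the claimed identity. The main obstacle is thus entirely the bookkeeping of this phase---through the branch of $(-2\pi i)^{k+\frac12}$, the conjugation, and the parity-dependent weight normalization of the half-integral Kloosterman zeta functions---since everything else is a direct substitution of Propositions~\ref{prop:Eisensteintraces} and~\ref{prop:Kloostermanzetapeiwang}.
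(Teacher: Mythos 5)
Your proposal is correct and follows essentially the same route as the paper: substitute Proposition \ref{prop:Eisensteintraces} (iii) into (i), convert the two plus space Kloosterman zeta values into generalized Hurwitz class numbers via (the complex conjugate of) Proposition \ref{prop:Kloostermanzetapeiwang} (ii), and simplify the remaining phase to $(-1)^{\lfloor k/2\rfloor}$ (the paper records this as $\frac{i(1-(-1)^k i)}{i^{k+1/2}}=(-1)^{\lfloor k/2\rfloor}\sqrt{2}$, which agrees with your case distinction). Your explicit remark that the level $4$ evaluation needs the $N=1$ analogue of Proposition \ref{prop:Kloostermanzetapeiwang} is a fair point of care that the paper glosses over.
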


\begin{proof}
Firstly, Proposition \ref{prop:Eisensteintraces} yields
\begin{multline*}
\sum_{Q \in \Qc_{p,D} \slash \Gamma_0(p)} \int_{\Gamma_0(p) \backslash S_Q} \Ec_{2k,p}(z) Q(z,1)^{k-1} \dm z = 2i\left(1-(-1)^{k}i\right) \frac{D^{k-\frac{1}{2}} \sqrt{\pi}\Gamma(k)\zeta(k)}{\Gamma\left(k+\frac{1}{2}\right)} \\
\times \left(p^{-k} \Ks_{\frac{3}{2}-k,4}^{+}\left(0,-D;k+\frac{1}{2}\right) + \left(1-p^{-k}\right) \Ks_{\frac{3}{2}-k,4p}^{+}\left(0,-D;k+\frac{1}{2}\right) \right).
\end{multline*}
Secondly, we insert Proposition \ref{prop:Kloostermanzetapeiwang} (ii) getting
\begin{align*}
\Ks_{\frac{3}{2}-k,4}^{+}\left(0,-D;k+\frac{1}{2}\right) &= \frac{1}{D^{k-\frac{1}{2}}} \frac{\Gamma\left(k+\frac{1}{2}\right)}{(2\pi i)^{k+\frac{1}{2}}} \frac{H_{k,1,1}(D)}{\zeta(1-2k)}, \\
\Ks_{\frac{3}{2}-k,4p}^{+}\left(0,-D;k+\frac{1}{2}\right) &= \frac{1}{D^{k-\frac{1}{2}}} \frac{\Gamma\left(k+\frac{1}{2}\right)}{(2\pi i)^{k+\frac{1}{2}}} \left(\frac{p-1}{p^{2k}-p} \frac{H_{k,1,p}(D)}{\zeta(1-2k)} + \frac{H_{k,p,p}(D)}{L_{p}(1-2k,\id)} \right).
\end{align*}
Noting that $\frac{i\left(1-(-1)^{k}i\right)}{i^{k+\frac{1}{2}}} = (-1)^{\lfloor \frac{k}{2} \rfloor} \sqrt{2}$ and combining yields the claim.
\end{proof}

\subsection{Proof of Theorem \ref{thm:shitanilift} and Corollary \ref{cor:shintanicor}}

\begin{proof}[Proof of Theorem \ref{thm:shitanilift} and Corollary \ref{cor:shintanicor}]
We abbreviate
\begin{align*}
f_{1}(\tau) \coloneqq \sum_{h \pmod*{2p}} I_h^{(\mathrm{S})}\left(\Ec_{2k,p}, 4p\tau\right)
\end{align*}
and
\begin{multline*}
f_{2}(\tau) \coloneqq (-1)^{\frac{k}{2}} \sqrt{p}\frac{\Gamma(k)\zeta(k)}{2^{k-1} \pi^k} \\
\times
\left(p^{-k}\frac{\Hs_{k,1,1}(\tau)}{\zeta(1-2k)} + \left(1-p^{-k}\right)\left(\frac{p-1}{p^{2k}-p} \frac{\Hs_{k,1,p}(\tau)}{\zeta(1-2k)} + \frac{\Hs_{k,p,p}(\tau)}{L_{p}(1-2k,\id)}\right)\right).
\end{multline*}
We note that
\begin{align*}
c_{f_{1}}(0) = (-1)^{\frac{k}{2}} \sqrt{p} \frac{\Gamma(k)\zeta(k)}{2^{k-1} \pi^k} = c_{f_{2}}(0) \neq 0
\end{align*}
for even $k$ by Proposition \ref{prop:Shintanifourier}. Hence, we obtain $f_{1} \in E_{k+\frac{1}{2}}^{+}(4p)$, because $p$ is prime and thus the plus space condition is satisfied. Furthermore, we have $f_{2} \in E_{k+\frac{1}{2}}^{+}(4p)$ by \cite{peiwang}*{Theorem 1 (II)} (see Lemma \ref{lem:peiwangmainresult} (ii) too). By virtue of Proposition \ref{prop:Shintanifourier}, the positive indexed Fourier coefficients of $f_{1}$ are given by
\begin{multline*}
-(-1)^{k-1}\sqrt{p} \sum_{h \pmod*{2p}} \sum_{n \geq 1} \tr\left(\Ec_{2k,p},n,h\right)q^{4pn} 
= (-1)^{k}\sqrt{p} \sum_{n \geq 1} \sum_{h \pmod*{2p}} \tr\left(\Ec_{2k,p},\frac{n}{4p},h\right)q^n \\
= (-1)^{k}\sqrt{p} \sum_{n \geq 1} \left[\sum_{Q \in \Qc_{N,n} \slash \Gamma_0(p)} \int_{\Gamma_0(p) \backslash S_Q}^{\reg} \Ec_{2k,p}(z) Q(z,1)^{k-1} \dm z \right] q^n.
\end{multline*}
According to Corollary \ref{cor:Eisensteintraces}, we deduce that the Fourier coefficients of $f_{1}$ and $f_{2}$ coincide on the infinite sequence of all non-square indices $n \geq 1$. Thus, we infer
\begin{align*}
f_{1} - f_{2} \eqqcolon f_3 \in S_{k+\frac{1}{2}}(4p),
\end{align*}
where the Fourier coefficients of $f_3$ are supported on square indices (in particular, the plus space condition is satisfied). However, such a form is a linear combination of weight $\frac{1}{2}$ or $\frac{3}{2}$ theta series according to a result of Vign\'eras \cite{vign77}*{Theorem 3} (see Bruinier \cite{bru98} for a different proof as well). Since $k > 1$, this implies that this linear combination has to be trivial and we infer $f_3 = 0$, which proves Theorem \ref{thm:shitanilift}. Corollary \ref{cor:shintanicor} then follows by comparing the square-indexed Fourier coefficients in Theorem \ref{thm:shitanilift}.
\end{proof}

\section{Proof of Theorem \ref{thm:millsonlift} and Corollary \ref{cor:millsoncor}} \label{sec:proofMillson}

We move to the proof of Theorem \ref{thm:millsonlift}.
\begin{proof}[Proof of Theorem \ref{thm:millsonlift}]
We abbreviate
\begin{align*}
g_{1}(\tau) \coloneqq \sum_{h \pmod*{2p}} I_h^{(\mathrm{M})}\left(\Fc_{2-2k,p}, 4p\tau\right),
\end{align*}
and
\begin{align*}
g_{2}(\tau) \coloneqq 3 (-1)^{\frac{k}{2}-1} \frac{\Gamma(k)\zeta(k)}{2^{k} \pi^k} (4p)^{k-\frac{1}{2}} \left(p^{-k} \Fc_{\frac{3}{2}-k,4}^{+}(\tau)  + \left(1-p^{-k}\right) \Fc_{\frac{3}{2}-k,4p}^{+}(\tau)\right).
\end{align*}
We compute $\xi_{\frac{3}{2}-k}g_1$ and $\xi_{\frac{3}{2}-k}g_2$. According to Proposition \ref{prop:Millsonfourier}, the non-holomorphic part of $g_1$ is given by
\begin{multline*}
-\sum_{h \pmod*{2p}} \sum_{m < 0} \frac{1}{2(4\pi\vt{m})^{k-\frac{1}{2}}} \overline{\tr\left(\xi_{2-2k}\Fc_{2-2k,p};m;h\right)} \Gamma\left(k-\frac{1}{2}, 4\pi \vt{m} (4pv)\right) q^{4pm} \\
= -\sum_{m < 0} \frac{2k-1}{2(4\pi\frac{\vt{m}}{4p})^{k-\frac{1}{2}}} \overline{\tr\left(\Ec_{2k,p};\frac{m}{4p}\right)} \Gamma\left(k-\frac{1}{2}, 4\pi \frac{\vt{m}}{4p} (4pv)\right) q^{m} \\
= -\frac{2k-1}{2} (4p)^{k-\frac{1}{2}} \sum_{m < 0} \frac{1}{(4\pi\vt{m})^{k-\frac{1}{2}}} \overline{\tr\left(\Ec_{2k,p};\frac{m}{4p}\right)} \Gamma\left(k-\frac{1}{2}, 4\pi \vt{m} v\right) q^{m}.
\end{multline*}
Thus, we obtain ($v \mapsto 4pv$ in the constant term)
\begin{align*}
\xi_{\frac{3}{2}-k}g_1(\tau) = (-1)^{\frac{k}{2}-1} \frac{\Gamma(k)\zeta(k)}{2^{k-1}\pi^k} (4p)^{k-\frac{1}{2}} \left(k-\frac{1}{2}\right) - (4p)^{k-\frac{1}{2}} \left(k-\frac{1}{2}\right) \sum_{m > 0} \tr\left(\Ec_{2k,p};-\frac{m}{4p}\right) q^{m},
\end{align*}
because $\tr\left(\Ec_{2k,p};-\frac{m}{4p}\right) \in \R$ according to Corollaries \ref{cor:shintanicor} and \ref{cor:Eisensteintraces}.
By virtue of Theorem \ref{thm:CEpreimage}, we also have
\begin{align*}
\xi_{\frac{3}{2}-k} \Fc_{\frac{3}{2}-k,4}^{+}(\tau) &= \frac{2k-1}{3\zeta(1-2k)} \Hs_{k,1,1}(\tau), \\
\xi_{\frac{3}{2}-k} \Fc_{\frac{3}{2}-k,4p}^{+}(\tau) &= \frac{2k-1}{3\zeta(1-2k)} \left(\frac{p-1}{p^{2k}-p} \Hs_{k,1,p}(\tau) + \frac{1}{1-p^{2k-1}} \Hs_{k,p,p}(\tau)\right),
\end{align*}
and thus
\begin{multline*}
\xi_{\frac{3}{2}-k}g_2(\tau) = (-1)^{\frac{k}{2}-1} \frac{(4p)^{k-\frac{1}{2}} (2k-1) \Gamma(k)\zeta(k)}{2^{k} \pi^k \zeta(1-2k)} \\ 
\times \left(p^{-k} \Hs_{k,1,1}(\tau)  + \left(1-p^{-k}\right) \left(\frac{p-1}{p^{2k}-p} \Hs_{k,1,p}(\tau) + \frac{1}{1-p^{2k-1}} \Hs_{k,p,p}(\tau)\right)\right).
\end{multline*}
By virtue of Lemma \ref{lem:Eisensteinfourier} (iii) and Proposition \ref{prop:Millsonfourier}, we infer
\begin{align*}
c_{g_1}^{-}(0) (4pv)^{k-\frac{1}{2}} = c_{g_2}^{-}(0) (4pv)^{k-\frac{1}{2}} = (-1)^{\frac{k}{2}-1} \frac{\Gamma(k)\zeta(k)}{2^{k-1}\pi^k} (4pv)^{k-\frac{1}{2}}.
\end{align*}
Combining with Corollary \ref{cor:Eisensteintraces} shows that the non-holomorphic parts of $g_1$ and $g_2$ coincide. In other words, we obtain
\begin{align*}
\xi_{\frac{3}{2}-k}g_1 = \xi_{\frac{3}{2}-k}g_2,
\end{align*}
and deduce that
\begin{align*}
g_1 - g_2 \eqqcolon g_3 \in \ker\left(\xi_{\frac{3}{2}-k}\right) = M_{\frac{3}{2}-k}^{!}(4p),
\end{align*}
where $g_3$ satisfies the plus space condition. We analyze the growth of $g_3$ at the cusps of $\Gamma_0(4p)$. Let $\af \in \Q$ be a cusp, which is $\Gamma_0(4p)$-inequivalent to $i\infty$. 
\begin{enumerate}
\item By Proposition \ref{prop:Millsonfourier}, the principal part of $g_1$ about $i\infty$ is constant. By Lemma \ref{lem:Eisensteinfourier} (iii), the principal part of $g_2$ about $i\infty$ is constant. Hence, $g_3$ is bounded at $i\infty$.

\item Adapting the proof of \cite{kubota}*{Theorem 2.1.2} slightly to include more general weights, the Eisenstein series $\Fc_{\frac{3}{2}-k,4p}$ is bounded as $\tau \to \af$, so the same is true for its projection to the plus space. Thus, $g_2$ is bounded at $\af$.

\item We justify that $g_1$ is bounded at $\af$. There is no contribution from $c_{g_1}^{-}(0)v^{k-\frac{1}{2}}$, because $k > 0$ and $v \to 0^{+}$ as $\tau \to \af$. Note that $c_{g_1}^{-}(n) = 0$ for $n > 0$. Hence, we need to consider the principal part of $g_1$ about $\af$. Let $\sigma_{\af} \in \slz$ be a scaling matrix for the cusp $\af$. Then, the Fourier expansion of $g_1$ about $\af$ can be read off from
\begin{align*}
g_{\af} \coloneqq g_1 \big\vert_{\frac{3}{2}-k} \sigma_{\af}.
\end{align*}
Since $I^{(\mathrm{M})}\left(\Fc_{2-2k,p}, \tau\right)$ is a vector-valued harmonic Maass form for the Weil representation $\rho_{L^{-}}$ having a constant principal part if and only if $h=0$ by Corollary \ref{cor:cuspparameters} and Proposition \ref{prop:Millsonfourier}, item (1) implies that $g_1$ is bounded at $\af$ as well. See \cite{bor98}*{Example 2.2} for further justification.
\end{enumerate}
Summing up, the function $g_3$ must be holomorphic at the cusps. However, a result of Cohen and Oesterl\'e \cite{coos}*{p.\ C-0-7} asserts that
$
M_{\frac{3}{2}-k}(4p) = \{0\},
$
since $k > 1$. Hence, we arrive at $g_3 = 0$, which completes the proof.
\end{proof}

Lastly, we prove Corollary \ref{cor:millsoncor}.
\begin{proof}[Proof of Corollary \ref{cor:millsoncor}]
Since the raising operator and the slash operator intertwine by \cite{thebook}*{Lemma 5.2}, equation \eqref{eq:Eisensteinraising} yields
\begin{align*}
R_{2-2k}^{k-1} \Fc_{2-2k,N}(\tau) = (k-1)! \Fc_{0,N}(\tau,k).
\end{align*}
Thus, the holomorphic part in Proposition \ref{prop:Millsonfourier} becomes
\begin{multline*}
\sum_{h \pmod*{2p}} \sum_{n > 0} \frac{1}{2\sqrt{n}} \left(\frac{\sqrt{p}}{4\pi\sqrt{n}}\right)^{k-1} \tr\left(\Fc_{0,p}(\cdot,k);-n;h\right) q^{4pn} \\
= \sum_{n > 0} \frac{1}{2\sqrt{\frac{n}{4p}}} \left(\frac{\sqrt{p}}{4\pi\sqrt{\frac{n}{4p}}}\right)^{k-1} \sum_{h \pmod*{2p}} \tr\left(\Fc_{0,p}(\cdot,k);-\frac{n}{4p};h\right) q^{n} \\
= \frac{p^{k-\frac{1}{2}}}{(2\pi)^{k-1}} \sum_{n > 0} n^{-\frac{k}{2}} \sum_{Q \in \Qc_{p,-n} \slash \Gamma_0(p)} \frac{1}{\vt{\left(\Gamma_0(p)\right)_Q}} \Fc_{0,p}\left(\tau_Q,k\right) q^n.
\end{multline*}
Comparing Fourier coefficients to the holomorpic part in Lemma \ref{lem:Eisensteinfourier} (iii) hence yields
\begin{multline*}
\frac{p^{k-\frac{1}{2}}(k-1)!}{(2\pi)^{k-1}} n^{-\frac{k}{2}} \sum_{Q \in \Qc_{p,-n} \slash \Gamma_0(p)} \frac{1}{\vt{\left(\Gamma_0(p)\right)_Q}} \Fc_{0,p}\left(\tau_Q,k\right) \\
= 3 \frac{2}{3} \left(\frac{i}{2}\right)^{k-\frac{3}{2}} \pi (-1)^{\frac{k}{2}-1} \frac{\Gamma(k)\zeta(k)}{2^{k} \pi^k} (4p)^{k-\frac{1}{2}} \\
\times \left(p^{-k} \Ks_{\frac{3}{2}-k,4}^{+}\left(0,n;k+\frac{1}{2}\right)  + \left(1-p^{-k}\right) \Ks_{\frac{3}{2}-k,4p}^{+}\left(0,n;k+\frac{1}{2}\right)\right).
\end{multline*}
Simplifying gives the claim.
\end{proof}

\section{Further questions} \label{sec:questions}

We conclude with a few questions for future work. The first one was suggested by Larry Rolen, and the second one was suggested by Nikolaos Diamantis.
\begin{enumerate}
\item Wagner \cite{wag} defined and investigated $p$-adic analogs of his harmonic preimages of the classical Cohen--Eisenstein series $\Hs_{k,1,1}$ under $\xi_{\frac{3}{2}-k}$. His results are discussed in \cite{thebook}*{Subsection 6.4} too. Do his $p$-adic constructions extend to higher level as well?
\item Diamantis, Lee, Raji and Rolen \cite{DLRR} as well as introduced $L$-series of harmonic Maass forms and reinterpreted the Fourier coefficients of Duke, Imamo\={g}lu and T\'{o}th \cite{dit11annals}. Subsequently, Diamantis and Rolen \cite{DR} studied $L$-values of harmonic Maass forms at integer points extending the work of Bruinier, Funke and Imamo\={g}lu \cite{brufuim}. Their evaluations are consequences of the chosen regularizations respectively. How do these relate to our identities in Corollaries \ref{cor:shintanicor} and \ref{cor:millsoncor}?
\item Can one evaluate the right hand side in Corollary \ref{cor:millsoncor} further? This is motivated by the remarks at the bottom of \cite{thebook}*{p.\ $106$}.
\item Are other regularized theta lifts of Eisenstein series related to Pei and Wang's generalized Cohen--Eisenstein series as well? For example, Bruinier, Ehlen and Yang \cite{BEY} combined a regularized Siegel theta lift with the iterated Maass raising operator from equation \eqref{eq:Maassraising} to access other weights than $0$ and $\frac{1}{2}$.
\end{enumerate}

\begin{bibsection}
\begin{biblist}

\bib{alfesthesis}{thesis}{
   author={Alfes, C.},
   title={CM values and Fourier coefficients of harmonic Maass forms},
   type={Ph.D. Thesis},
   organization={TU Darmstadt},
   date={2015},
}

\bib{alfes}{article}{
   author={Alfes, C.},
   title={Formulas for the coefficients of half-integral weight harmonic
   Maa\ss\ forms},
   journal={Math. Z.},
   volume={277},
   date={2014},
   number={3-4},
   pages={769--795},
}

\bib{alfehl}{article}{
   author={Alfes, C.},
   author={Ehlen, S.},
   title={Twisted traces of CM values of weak Maass forms},
   journal={J. Number Theory},
   volume={133},
   date={2013},
   number={6},
   pages={1827--1845},
}

\bib{afmr}{article}{
   author={Alfes, C.},
   author={Funke, J.},
   author={Mertens, M. H.},
   author={Rosu, E.},
   title={On Jacobi-Weierstrass mock modular forms},
   journal={Adv. Math.},
   volume={465},
   date={2025},
   pages={Paper No. 110147, 33},
}

\bib{agor}{article}{
   author={Alfes, C.},
   author={Griffin, M.},
   author={Ono, K.},
   author={Rolen, L.},
   title={Weierstrass mock modular forms and elliptic curves},
   journal={Res. Number Theory},
   volume={1},
   date={2015},
   pages={Paper No. 24, 31},
}

\bib{alneschw18}{article}{
   author={Alfes-Neumann, C.},
   author={Schwagenscheidt, M.},
   title={On a theta lift related to the Shintani lift},
   journal={Adv. Math.},
   volume={328},
   date={2018},
   pages={858--889},
}

\bib{alneschw21}{article}{
   author={Alfes-Neumann, C.},
   author={Schwagenscheidt, M.},
   title={Shintani theta lifts of harmonic Maass forms},
   journal={Trans. Amer. Math. Soc.},
   volume={374},
   date={2021},
   number={4},
   pages={2297--2339},
}

\bib{andu}{article}{
   author={Andersen, N.},
   author={Duke, W.},
   title={Modular invariants for real quadratic fields and Kloosterman sums},
   journal={Algebra Number Theory},
   volume={14},
   date={2020},
   number={6},
   pages={1537--1575},
}

\bib{bemo1}{webpage}{
   author={Beckwith, O.},
   author={Mono, A.},
   title={A modular framework for generalized Hurwitz class numbers I},
   url={https://arxiv.org/abs/2403.17829},
   year={2024},
   note={preprint},
}

\bib{bemo2}{webpage}{
   author={Beckwith, O.},
   author={Mono, A.},
   title={A modular framework for generalized Hurwitz class numbers II},
   url={https://arxiv.org/abs/2411.07962},
   year={2024},
   note={preprint},
}

\bib{nist}{collection}{
   title={NIST handbook of mathematical functions},
   editor={Boisvert, Ronald F.},
   editor={Clark, Charles W.},
   editor={Lozier, Daniel W.},
   editor={Olver, Frank W. J.},
   note={With 1 CD-ROM (Windows, Macintosh and UNIX)},
   publisher={U.S. Department of Commerce, National Institute of Standards and Technology, Washington, DC; Cambridge University Press, Cambridge},
   date={2010},
}

\bib{bor98}{article}{
	author={Borcherds, R.\ E.},
	title={Automorphic forms with singularities on Grassmannians},
	journal={Invent. Math.},
	volume={132},
	date={1998},
	number={3},
	pages={491--562},
}

\bib{bor00}{article}{
   author={Borcherds, R.\ E.},
   title={Reflection groups of Lorentzian lattices},
   journal={Duke Math. J.},
   volume={104},
   date={2000},
   number={2},
   pages={319--366},
}

\bib{brdieh}{article}{
   author={Bringmann, K.},
   author={Diamantis, N.},
   author={Ehlen, S.},
   title={Regularized inner products and errors of modularity},
   journal={Int. Math. Res. Not. IMRN},
   date={2017},
   number={24},
   pages={7420--7458},
}

\bib{thebook}{book}{
   author={Bringmann, K.},
   author={Folsom, A.},
   author={Ono, K.},
   author={Rolen, L.},
   title={Harmonic Maass forms and mock modular forms: theory and applications},
   series={American Mathematical Society Colloquium Publications},
   volume={64},
   publisher={American Mathematical Society, Providence, RI},
   date={2017},
   pages={xv+391},
}

\bib{bgk14}{article}{
   author={Bringmann, K.},
   author={Guerzhoy, P.},
   author={Kane, B.},
   title={Shintani lifts and fractional derivatives for harmonic weak Maass
   forms},
   journal={Adv. Math.},
   volume={255},
   date={2014},
   pages={641--671},
}

\bib{bgk15}{article}{
   author={Bringmann, K.},
   author={Guerzhoy, P.},
   author={Kane, B.},
   title={On cycle integrals of weakly holomorphic modular forms},
   journal={Math. Proc. Cambridge Philos. Soc.},
   volume={158},
   date={2015},
   number={3},
   pages={439--449},
}

\bib{BO1}{article}{
   author={Bringmann, K.},
   author={Ono, K.},
   title={Lifting cusp forms to Maass forms with an application to
   partitions},
   journal={Proc. Natl. Acad. Sci. USA},
   volume={104},
   date={2007},
   number={10},
   pages={3725--3731},
}

\bib{BO2}{article}{
   author={Bringmann, K.},
   author={Ono, K.},
   title={Arithmetic properties of coefficients of half-integral weight
   Maass-Poincar\'e{} series},
   journal={Math. Ann.},
   volume={337},
   date={2007},
   number={3},
   pages={591--612},
}

\bib{bru98}{article}{
   author={Bruinier, J.\ H.},
   title={On a theorem of Vign\'eras},
   journal={Abh. Math. Sem. Univ. Hamburg},
   volume={68},
   date={1998},
   pages={163--168},
}

\bib{BEY}{article}{
   author={Bruinier, J.\ H.},
   author={Ehlen, S.},
   author={Yang, T.},
   title={CM values of higher automorphic Green functions for orthogonal
   groups},
   journal={Invent. Math.},
   volume={225},
   date={2021},
   number={3},
   pages={693--785},
}

\bib{brufu02}{article}{
   author={Bruinier, J.\ H.},
   author={Funke, J.},
   title={On two geometric theta lifts},
   journal={Duke Math. J.},
   volume={125},
   date={2004},
   number={1},
   pages={45--90},
}

\bib{brufu06}{article}{
   author={Bruinier, J.\ H.},
   author={Funke, J.},
   title={Traces of CM values of modular functions},
   journal={J. Reine Angew. Math.},
   volume={594},
   date={2006},
   pages={1--33},
}

\bib{brufuim}{article}{
   author={Bruinier, J.\ H.},
   author={Funke, J.},
   author={Imamo\={g}lu, \"{O}.},
   title={Regularized theta liftings and periods of modular functions},
   journal={J. Reine Angew. Math.},
   volume={703},
   date={2015},
   pages={43--93},
}

\bib{BO3}{article}{
   author={Bruinier, J.\ H.},
   author={Ono, K.},
   title={Heegner divisors, $L$-functions and harmonic weak Maass forms},
   journal={Ann. of Math. (2)},
   volume={172},
   date={2010},
   number={3},
   pages={2135--2181},
}

\bib{BO4}{article}{
   author={Bruinier, J.\ H.},
   author={Ono, K.},
   title={Algebraic formulas for the coefficients of half-integral weight
   harmonic weak Maass forms},
   journal={Adv. Math.},
   volume={246},
   date={2013},
   pages={198--219},
}

\bib{BS}{article}{
   author={Bruinier, J.\ H.},
   author={Schwagenscheidt, M.},
   title={Theta lifts for Lorentzian lattices and coefficients of mock theta
   functions},
   journal={Math. Z.},
   volume={297},
   date={2021},
   number={3-4},
   pages={1633--1657},
}

\bib{BY}{article}{
   author={Bruinier, J.\ H.},
   author={Yang, T.},
   title={Faltings heights of CM cycles and derivatives of $L$-functions},
   journal={Invent. Math.},
   volume={177},
   date={2009},
   number={3},
   pages={631--681},
}

\bib{bump}{book}{
   author={Bump, D.},
   title={Automorphic forms and representations},
   series={Cambridge Studies in Advanced Mathematics},
   volume={55},
   publisher={Cambridge University Press, Cambridge},
   date={1997},
   pages={xiv+574},
}

\bib{cip83}{article}{
   author={Cipra, B. A.},
   title={On the Niwa-Shintani theta-kernel lifting of modular forms},
   journal={Nagoya Math. J.},
   volume={91},
   date={1983},
   pages={49--117},
}

\bib{cohen75}{article}{
   author={Cohen, H.},
   title={Sums involving the values at negative integers of $L$-functions of quadratic characters},
   journal={Math. Ann.},
   volume={217},
   date={1975},
   number={3},
   pages={271--285},
}

\bib{coos}{article}{
   author={Cohen, H.},
   author={Oesterl\'e, J.},
   title={Dimensions des espaces de formes modulaires},
   language={French},
   conference={
      title={Modular functions of one variable, VI},
      address={Proc. Second Internat. Conf., Univ. Bonn, Bonn},
      date={1976},
   },
   book={
      series={Lecture Notes in Math.},
      volume={Vol. 627},
      publisher={Springer, Berlin-New York},
   },
   isbn={3-540-08530-0},
   date={1977},
   pages={69--78},
}

\bib{cohstr}{book}{
   author={Cohen, H.},
   author={Str\"omberg, F.},
   title={Modular forms},
   series={Graduate Studies in Mathematics},
   volume={179},
   note={A classical approach},
   publisher={American Mathematical Society, Providence, RI},
   date={2017},
}

\bib{DR}{article}{
   author={Diamantis, N.},
   author={Rolen, L.},
   title={$L$-values of harmonic Maass forms},
   journal={Trans. Amer. Math. Soc.},
   volume={377},
   date={2024},
   number={6},
   pages={3905--3926},
}

\bib{DLRR}{article}{
   author={Diamantis, N.},
   author={Lee, M.},
   author={Raji, W.},
   author={Rolen, L.},
   title={$L$-series of harmonic Maass forms and a summation formula for
   harmonic lifts},
   journal={Int. Math. Res. Not. IMRN},
   date={2023},
   number={18},
   pages={15729--15765},
}

\bib{dit11annals}{article}{
   author={Duke, W.},
   author={Imamo\={g}lu, \"{O}.},
   author={T\'{o}th, \'{A}.},
   title={Cycle integrals of the $j$-function and mock modular forms},
   journal={Ann. of Math. (2)},
   volume={173},
   date={2011},
   number={2},
   pages={947--981},
}

\bib{dit18}{article}{
   author={Duke, W.},
   author={Imamo\={g}lu, \"{O}.},
   author={T\'{o}th, \'{A}.},
   title={Kronecker's first limit formula, revisited},
   journal={Res. Math. Sci.},
   volume={5},
   date={2018},
   number={2},
   pages={Paper No. 20, 21},
}

\bib{eiza}{book}{
   author={Eichler, M.},
   author={Zagier, D.},
   title={The theory of Jacobi forms},
   series={Progress in Mathematics},
   volume={55},
   publisher={Birkh\"{a}user Boston, Inc., Boston, MA},
   date={1985},
   pages={v+148},
}

\bib{fumi}{article}{
   author={Funke, J.},
   author={Millson, J.},
   title={Spectacle cycles with coefficients and modular forms of
   half-integral weight},
   conference={
      title={Arithmetic geometry and automorphic forms},
   },
   book={
      series={Adv. Lect. Math. (ALM)},
      volume={19},
      publisher={Int. Press, Somerville, MA},
   },
   date={2011},
   pages={91--154},
}

\bib{table}{book}{
   author={Gradshteyn, I. S.},
   author={Ryzhik, I. M.},
   title={Table of integrals, series, and products},
   edition={6},
   note={Translated from the Russian; Translation edited and with a preface by Alan Jeffrey and Daniel Zwillinger},
   publisher={Academic Press, Inc., San Diego, CA},
   date={2000},
}

\bib{grokoza}{article}{
   author={Gross, B.},
   author={Kohnen, W.},
   author={Zagier, D.},
   title={Heegner points and derivatives of $L$-series. II},
   journal={Math. Ann.},
   volume={278},
   date={1987},
   number={1-4},
   pages={497--562},
}

\bib{gue15}{article}{
   author={Guerzhoy, P.},
   title={A mixed mock modular solution of the Kaneko-Zagier equation},
   journal={Ramanujan J.},
   volume={36},
   date={2015},
   number={1-2},
   pages={149--164},
}

\bib{gue14}{article}{
   author={Guerzhoy, P.},
   title={On Zagier's adele},
   journal={Res. Math. Sci.},
   volume={1},
   date={2014},
   pages={Art. 7, 19},
}

\bib{hamo}{article}{
   author={Harvey, J.\ A.},
   author={Moore, G.},
   title={Algebras, BPS states, and strings},
   journal={Nuclear Phys. B},
   volume={463},
   date={1996},
   number={2-3},
   pages={315--368},
}

\bib{ibusa}{article}{
   author={Ibukiyama, T.},
   author={Saito, H.},
   title={On zeta functions associated to symmetric matrices, II: Functional
   equations and special values},
   journal={Nagoya Math. J.},
   volume={208},
   date={2012},
   pages={265--316},
}

\bib{ilt22}{article}{
   author={Imamo\u{g}lu, \"{O}.},
   author={L\"{a}geler, A.},
   author={T\'{o}th, \'{A}.},
   title={The Katok-Sarnak formula for higher weights},
   journal={J. Number Theory},
   volume={235},
   date={2022},
   pages={242--274},
}

\bib{koh80}{article}{
   author = {Kohnen, W.},
   title={Modular forms of half-integral weight on $\Gamma \sb{0}(4)$},
   journal={Math. Ann.},
   volume={248},
   date={1980},
   number={3},
   pages={249--266},
}

\bib{koh82}{article}{
   author = {Kohnen, W.},
   title={Newforms of half-integral weight},
   journal={J. Reine Angew. Math.},
   volume={333},
   date={1982},
   pages={32--72},
}

\bib{koh85}{article}{
	author = {Kohnen, W.},
	title = {Fourier coefficients of modular forms of half-integral weight},
	journal = {Math. Ann.},
	volume = {271},
	date = {1985},
	number = {2},
	pages = {237--268},
}

\bib{koza81}{article}{
   author={Kohnen, W.},
   author={Zagier, D.},
   title={Values of $L$-series of modular forms at the center of the
   critical strip},
   journal={Invent. Math.},
   volume={64},
   date={1981},
   number={2},
   pages={175--198},
}

\bib{koza84}{article}{
   author={Kohnen, W.},
   author={Zagier, D.},
   title={Modular forms with rational periods},
   conference={
      title={Modular forms},
      address={Durham},
      date={1983},
   },
   book={
      series={Ellis Horwood Ser. Math. Appl.: Statist. Oper. Res.},
      publisher={Horwood, Chichester},
   },
   date={1984},
   pages={197--249},
}

\bib{kubota}{book}{
   author={Kubota, T.},
   title={Elementary theory of Eisenstein series},
   publisher={Kodansha, Ltd., Tokyo; Halsted Press [John Wiley \& Sons,
   Inc.], New York-London-Sydney},
   date={1973},
   pages={xi+110},
}

\bib{maass64}{book}{
   author={Maass, H.},
   title={Lectures on modular functions of one complex variable},
   series={Tata Institute of Fundamental Research Lectures on Mathematics
   and Physics},
   volume={29},
   edition={2},
   note={With notes by Sunder Lal},
   publisher={Tata Institute of Fundamental Research, Bombay},
   date={1983},
   pages={iii+262},
}

\bib{males}{article}{
   author={Males, J.},
   title={Higher Siegel theta lifts on Lorentzian lattices, harmonic Maass
   forms, and Eichler-Selberg type relations},
   journal={Math. Z.},
   volume={301},
   date={2022},
   number={4},
   pages={3555--3569},
}

\bib{mamo1}{article}{
   author={Males, J.},
   author={Mono, A.},
   title={Local Maass forms and Eichler-Selberg relations for
   negative-weight vector-valued mock modular forms},
   journal={Pacific J. Math.},
   volume={322},
   date={2023},
   number={2},
   pages={381--406},
}

\bib{mamo2}{article}{
   author={Males, J.},
   author={Mono, A.},
   title={Correction to the article Local Maa\ss forms and Eichler--Selberg
   relations for negative-weight vector-valued mock modular forms},
   journal={Pacific J. Math.},
   volume={332},
   date={2024},
   number={2},
   pages={395--396},
}

\bib{mamoro1}{article}{
   author={Males, J.},
   author={Mono, A.},
   author={Rolen, L.},
   title={Polar harmonic Maa\ss forms and holomorphic projection},
   journal={Int. J. Number Theory},
   volume={18},
   date={2022},
   number={9},
   pages={1975--2004},
}

\bib{mmrw}{webpage}{
   author={Males, J.},
   author={Mono, A.},
   author={Rolen, L.},
   author={Wagner, I.},
   title={Central $L$-values of newforms and local polynomials},
   year={2023},
   url={https://arxiv.org/abs/2306.15519},
   note={preprint},
}

\bib{meonro}{article}{
   author={Mertens, M.},
   author={Ono, K.},
   author={Rolen, L.},
   title={Mock modular Eisenstein series with Nebentypus},
   journal={Int. J. Number Theory},
   volume={17},
   date={2021},
   number={3},
   pages={683--697},
}

\bib{mo1}{article}{
   author={Mono, A.},
   title={Multidimensional small divisor functions},
   journal={Integers},
   volume={21},
   date={2021},
   pages={Paper No. A104, 12},
}

\bib{moreno}{article}{
   author={Moreno, C.\ J.},
   title={Explicit formulas in the theory of automorphic forms},
   conference={
      title={Number theory day},
      address={Proc. Conf., Rockefeller Univ., New York},
      date={1976},
   },
   book={
      series={Lecture Notes in Math.},
      volume={Vol. 626},
      publisher={Springer, Berlin-New York},
   },
   isbn={3-540-08529-7},
   date={1977},
   pages={73--216},
}

\bib{niwa}{article}{
   author={Niwa, S.},
   title={Modular forms of half integral weight and the integral of certain
   theta-functions},
   journal={Nagoya Math. J.},
   volume={56},
   date={1975},
   pages={147--161},
}

\bib{peiI}{article}{
   author={Pei, T. Y.},
   title={Eisenstein series of weight $3/2$. I},
   journal={Trans. Amer. Math. Soc.},
   volume={274},
   date={1982},
   number={2},
   pages={573--606},
}

\bib{peiII}{article}{
   author={Pei, T. Y.},
   title={Eisenstein series of weight $3/2$. II},
   journal={Trans. Amer. Math. Soc.},
   volume={283},
   date={1984},
   number={2},
   pages={589--603},
}

\bib{peiwang}{article}{
   author={Wang, X.},
   author={Pei, D.},
   title={A generalization of Cohen-Eisenstein series and Shimura liftings and some congruences between cusp forms and Eisenstein series},
   journal={Abh. Math. Sem. Univ. Hamburg},
   volume={73},
   date={2003},
   pages={99--130},
}

\bib{peiwangbook}{book}{
   author={Wang, X.},
   author={Pei, D.},
   title={Modular forms with integral and half-integral weights},
   publisher={Science Press Beijing, Beijing; Springer, Heidelberg},
   date={2012},
   pages={x+432},
}

\bib{ramanujan}{collection}{
   author={Ramanujan, S.},
   title={Collected papers of Srinivasa Ramanujan},
   editor={Hardy, G. H.},
   editor={Seshu Aiyar, P. V.},
   editor={Wilson, B. M.},
   note={Edited by G. H. Hardy, P. V. Seshu Aiyar and B. M. Wilson;
   Third printing of the 1927 original;
   With a new preface and commentary by Bruce C. Berndt},
   publisher={AMS Chelsea Publishing, Providence, RI},
   date={2000},
   pages={xxxviii+426},
}

\bib{rhowal}{article}{
   author={Rhoades, R. C.},
   author={Waldherr, M.},
   title={A Maass lifting of $\Theta^3$ and class numbers of real and imaginary quadratic fields},
   journal={Math. Res. Lett.},
   volume={18},
   date={2011},
   number={5},
   pages={1001--1012},
}

\bib{schw18}{thesis}{
   author={Schwagenscheidt, M.},
   title={Regularized Theta Lifts of Harmonic Maass Forms},
   type={Ph.D. Thesis},
   organization={TU Darmstadt},
   date={2018},
}

\bib{shim}{article}{
   author={Shimura, G.},
   title={On modular forms of half integral weight},
   journal={Ann. of Math. (2)},
   volume={97},
   date={1973},
   pages={440--481},
}

\bib{shin}{article}{
   author={Shintani, T.},
   title={On construction of holomorphic cusp forms of half integral weight},
   journal={Nagoya Math. J.},
   volume={58},
   date={1975},
}

\bib{siegel}{book}{
   author={Siegel, C.\ L.},
   title={Advanced analytic number theory},
   series={Tata Institute of Fundamental Research Studies in Mathematics},
   volume={9},
   edition={2},
   publisher={Tata Institute of Fundamental Research, Bombay},
   date={1980},
   pages={v+268},
}

\bib{siegel56}{article}{
   author={Siegel, C.\ L.},
   title={Die Funktionalgleichungen einiger Dirichletscher Reihen},
   journal={Math. Z.},
   volume={63},
   date={1956},
   pages={363--373},
}

\bib{sz88}{article}{
   author={Skoruppa, N.-P.},
   author={Zagier, D.},
   title={Jacobi forms and a certain space of modular forms},
   journal={Invent. Math.},
   volume={94},
   date={1988},
   number={1},
   pages={113--146},
}

\bib{vass}{thesis}{
   author={Vassileva, I.\ N.},
   title={Dedekind eta function, Kronecker limit formula and Dedekind sum for the Hecke group},
   type={Ph.D. Thesis},
   organization={University of Massachusetts Amherst},
   date={1996},
}

\bib{vign77}{article}{
   author={Vign\'eras, M.-F.},
   title={Facteurs gamma et \'equations fonctionnelles},
   language={French},
   conference={
      title={Modular functions of one variable, VI},
      address={Proc. Second Internat. Conf., Univ. Bonn, Bonn},
      date={1976},
   },
   book={
      series={Lecture Notes in Math.},
      volume={Vol. 627},
      publisher={Springer, Berlin-New York},
   },
   date={1977},
   pages={79--103},
}

\bib{wag}{article}{
   author={Wagner, I.},
   title={Harmonic Maass form eigencurves},
   journal={Res. Math. Sci.},
   volume={5},
   date={2018},
   number={2},
   pages={Paper No. 24, 16},
}

\bib{zagier75-2}{article}{
   author={Zagier, D.},
   title={Nombres de classes et formes modulaires de poids $3/2$},
   language={French, with English summary},
   journal={C. R. Acad. Sci. Paris S{\'e}r. A-B},
   volume={281},
   date={1975},
   number={21},
   pages={Ai, A883--A886},
}

\bib{zagier76-2}{article}{
   author={Zagier, D.},
   title={On the values at negative integers of the zeta-function of a real quadratic field},
   journal={Enseign. Math. (2)},
   volume={22},
   date={1976},
   number={1-2},
   pages={55--95},
}

\bib{zag02}{article}{
   author={Zagier, D.},
   title={Traces of singular moduli},
   conference={
      title={Motives, polylogarithms and Hodge theory, Part I},
      address={Irvine, CA},
      date={1998},
   },
   book={
      series={Int. Press Lect. Ser.},
      volume={3, I},
      publisher={Int. Press, Somerville, MA},
   },
   isbn={1-57146-090-X},
   date={2002},
   pages={211--244},
}
\end{biblist}
\end{bibsection}

\end{document}